\documentclass{amsart}

\usepackage{bm}
\usepackage{amsthm}
\usepackage{amsmath}
\usepackage{amssymb}
\usepackage{enumerate} 
\usepackage{comment}
\usepackage{mathtools}
\usepackage{url}
\usepackage{color}
\usepackage{enumitem}
\usepackage{booktabs}
\usepackage{placeins}

\theoremstyle{definition}
\newtheorem{dfn}{Definition}[section]
\newtheorem{prop}[dfn]{Proposition}

\newtheorem{lem}[dfn]{Lemma}

\newtheorem{thm}[dfn]{Theorem}

\theoremstyle{remark}
\newtheorem{rem}[dfn]{Remark}

\newcommand{\1}{
    \bm{1}
}

\newcommand{\pZ}[1]{
(\mathbb{Z}/{ #1 \mathbb{Z}})^{\times}
}

\newcommand{\aZ}[1]{
\mathbb{Z}/{ #1 \mathbb{Z}}
}

\title{On the level of distribution of Goldbach primes and its applications}

\author{Mizuki Akeno}
\date{\today}

\email{akeno.mizuki.tkb\_ee@u.tsukuba.ac.jp}
\date{\today}
\address{College of Mathematics, University of Tsukuba, Tsukuba, Japan}
\keywords{Goldbach conjecture, sieve methods}
\subjclass[2020]{11N36, 11P32}

\begin{document}

\begin{abstract}

We prove that, for almost all even integers $N>0$, the set of Goldbach primes $\mathbb{P} \cap (N-\mathbb{P})$ has a level of distribution $1/6$. 
As applications, we show that almost all even integers $N>0$ can be written as the sum of two primes $p_1, p_2$ such that $p_1-p_2+1 \in \mathbb{P}_4$.
We also prove an analogous result with $2p_1 p_2+1 \in \mathbb{P}_{13}$ for almost all integers \(N>0\) with \(6\mid N\). 
\end{abstract}

\maketitle

\section{Introduction}

Let $\mathbb{P}$ denote the set of primes, and let $\mathbb{P}_k$ denote the set of products of at most $k$ primes. 
It is well known that, for any $A>0$ and $X>2$, one has
\[ \sum_{m+n=N} \Lambda(m) \Lambda(n) = \mathfrak{S}(N) N + O\left( \frac{X}{(\ln{X})^A} \right) \]
for all but $O(X(\ln{X})^{-A})$ integers $N \in (X/2,X]$. Here, $\mathfrak{S}(N)$ is the singular series, defined by
\[ \mathfrak{S}(N) = \prod_{p} \left(1 + \frac{c_p(N)}{\varphi(p)^2} \right) = \sum_{q} \frac{\mu(q)^2}{\varphi(q)^2} c_q(N). \]

A Bombieri--Vinogradov type theorem for $\mathbb{P} \cap (N-\mathbb{P})$ has been studied in the following form:
\begin{equation} \sum_{d_1 \leq D_1} \sum_{d_2 \leq D_2} \lambda^{(1)}_{d_1} \lambda^{(2)}_{d_2} \left( \sum_{\substack{m+n=N \\ m \equiv l_1 \bmod d_1 \\ n \equiv l_2 \bmod d_2}} \Lambda(m) \Lambda(n) - M_0 \right) \ll \frac{X}{(\ln{X})^A} \label{BV1} \end{equation}
for almost all $N$, where $l_1, l_2$ are non-zero integers, $(\lambda^{(1)}_d),(\lambda^{(2)}_d)$ are divisor bounded sequences and $M_0=M_0(d_1,l_1,d_2,l_2,N)$ is a suitable approximation of the sum over $m,n$.

The case \(D_1=X^{1/2}\), \(D_2=1\) follows from the Bombieri--Vinogradov theorem and a standard circle method argument. See also \cite{kawada1993prime_ktuplets_ap},\cite{mikawa1992prime_twins_ap}. 
With a minor adaptation to almost-prime weights, this implies that, for almost all positive integers $N \equiv 4 \bmod 6$, the equation \(p_1+p_2=N\), \(p_1+2 \in \mathbb{P}_2\) is solvable.

Tolev's argument \cite{tolev2000additive} yields an estimate of the form \eqref{BV1} with $D_1=X^{1/2}, D_2 = X^{1/3}$. He applied this to show that, for almost all positive $N \equiv 4 \bmod 6$, the equation $p_1+p_2=N, p_1+2 \in \mathbb{P}_5, p_2+2 \in \mathbb{P}_7$ is solvable.

Matom{\"a}ki's argument \cite{matomaki2009bombieri} yields an estimate of the form \eqref{BV1} with $D_1=X^{1/2}, D_2 = X^{1/2}$, under the assumption that one of the sequences $\lambda^{(i)}_d$ is well-factorable. As an application, she proved the solvability of $p_1+p_2=N, p_1+2 \in \mathbb{P}_2, p_2+2 \in \mathbb{P}_7$ for almost all $N \equiv 4 \bmod 6$.

A related result for $p_1+p_2=N, p_1+2 \in \mathbb{P}_2, p_2+2 \in \mathbb{P}_2$ can be obtained, by different methods, from the work of Matom{\"a}ki and Shao \cite{matomaki2017vinogradov}. 
See also \cite{grimmelt2025exceptional} and \cite{grimmelt2022goldbach}.

One can extend \eqref{BV1} to the case where \(l_i\) is allowed to depend on \(d_i\) for each \(i=1,2\) and $D_1=X^{1/2}, D_2 = X^{1/3}$, with little additional effort.  See, for example, \cite[Theorem 2.1]{akeno2025small}. 
Thus, one may obtain similar results for the equation $p_1+p_2=N, f_1(p_1) \in \mathbb{P}_{r_1}, f_2(p_2) \in \mathbb{P}_{r_2}$ for suitable low-complexity functions $f_i$ such as polynomials and suitable $r_1,r_2 \geq 2$. 

The basic strategy in the proofs of the above-mentioned results is to rewrite the contribution of $\Lambda$ in \eqref{BV1} as
\[ \int_{\mathbb{T}} \left( \sum_{d_1 \leq D_1} \lambda_{d_1} \sum_{\substack{m \leq X \\ m \equiv l_1 \bmod d_1}} \Lambda(m) e(m\alpha) \right) \left( \sum_{d_2 \leq D_2} \lambda_{d_2} \sum_{\substack{n \leq X \\ n \equiv l_2 \bmod d_2}} \Lambda(n) e(n\alpha) \right) e(-N\alpha) d\alpha, \]
where $e(\cdot)=e^{2\pi i \cdot}$ and $\mathbb{T}=\mathbb{R}/\mathbb{Z}$, and then to use major- and minor-arc estimates for 
\[ \sum_{d_i \leq D_i} \lambda_{d_i} \sum_{\substack{n \leq X \\ n \equiv l_i \bmod d_i}} \Lambda(n) e(n\alpha). \]
Thus, in this approach the residue classes $l_i$ may depend on $d_i$, but not on $N$; this independence is used, in particular, when applying Bessel's inequality over the minor arc.

In this paper, we consider \eqref{BV1} in the case where $l_i$ may also depend on $N$. 
We prove the following result. 
\begin{thm}\label{T1}
Let $\theta \in (0,1/6)$, let $A>0$ and let $k \geq 0$. 
Let $w_1,w_2$ be smooth functions supported on $[\eta,1]$ for some $\eta>0$. 

For all integers $N \in (X/2,X]$ with $O(X(\ln{X})^{-A})$ exceptions, we have
\[ \sum_{d \leq X^{\theta}} \tau(d)^k \sup_{l:(d,l(N-l))=1} |R_{d,l}(N)| \ll \frac{X}{(\ln{X})^A} \]
where
\[ R_{d,l}(N) = \sum_{\substack{m+n=N \\ m \equiv l \bmod d}} \Lambda(m) \Lambda(n) w_1(m/X) w_2(n/X) - \frac{\mathfrak{S}(N)}{\psi_N(d)} \sum_{m+n=N} w_1(m/X) w_2(n/X) \]
and $\psi_N$ is defined by
\[ \psi_N(d) = d \prod_{\substack{p|d \\ p | N}} \left( 1 - \frac{1}{p} \right) \prod_{\substack{p|d \\ p \nmid N}} \left( 1 - \frac{2}{p} \right). \]
Here and throughout, we use the convention that the supremum over an empty set is $0$. 
\end{thm}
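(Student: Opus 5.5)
The plan is to remove the $N$-dependence of the residue class $l$ by expanding in Dirichlet characters. Then the problem becomes a mean-square estimate over $N$ for character-twisted binary Goldbach sums, which can be attacked with the circle method and Parseval. The key steps, in order, are as follows.

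\emph{Step 1: character expansion.} For $(d,l(N-l))=1$, orthogonality gives
\[ \sum_{\substack{m+n=N\\ m\equiv l \bmod d}}\Lambda(m)\Lambda(n)w_1(m/X)w_2(n/X)=\frac{1}{\varphi(d)}\sum_{\chi \bmod d}\overline{\chi}(l)\,G_N(\chi), \]
where
\[ G_N(\chi)=\sum_{m+n=N}\chi(m)\Lambda(m)\Lambda(n)w_1(m/X)w_2(n/X). \]
Up to a negligible error from prime powers dividing $d$, one may replace $\chi$ by the primitive character $\chi^*$ mod $r\mid d$ inducing it.

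\emph{Step 2: main terms.} The naive main term $1/\varphi(d)$ from the principal character alone does not match the target $1/\psi_N(d)$. The discrepancy is supplied by non-principal characters of small conductor. For each primitive $\chi^*$ mod $r$, the major-arc heuristic gives an expected value
\[ M_N(\chi^*)=\mathfrak{S}(N,\chi^*)\sum_{m+n=N}w_1(m/X)w_2(n/X), \]
where $\mathfrak{S}(N,\chi^*)$ is a twisted singular series built from Gauss sums $\sum_a\chi^*(a)e(a b/q)$. I will verify by a local computation prime by prime that
\[ \frac{1}{\varphi(d)}\sum_{\chi\bmod d}\overline{\chi}(l)\,M_N(\chi^*)=\frac{\mathfrak{S}(N)}{\psi_N(d)}\sum_{m+n=N}w_1(m/X)w_2(n/X) \]
for every admissible $l$. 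This is the reason $\psi_N$ appears. Writing $E_N(\chi^*)=G_N(\chi^*)-M_N(\chi^*)$, the sup over $l$ disappears, because $|\overline\chi(l)|\le 1$. Regrouping by conductor, the left side of Theorem \ref{T1} is then
\[ \ll \sum_{r\le Q}\Big(\sum_{\substack{d\le Q\\ r\mid d}}\frac{\tau(d)^k}{\varphi(d)}\Big)\sideset{}{^*}\sum_{\chi\bmod r}|E_N(\chi)| \ll (\ln X)^{c}\sum_{r\le Q}\frac{\tau(r)^{k}}{\varphi(r)}\sideset{}{^*}\sum_{\chi\bmod r}|E_N(\chi)|, \qquad Q=X^\theta. \]

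\emph{Step 3: reduction to a mean square over $N$.} By Chebyshev's inequality, it suffices to bound the sum of the last quantity over $N\in(X/2,X]$ by $X^2(\ln X)^{-B}$. Cauchy--Schwarz over $N$ and over the characters reduces this to
\[ \sum_{r\le Q}\frac{1}{\varphi(r)}\sideset{}{^*}\sum_{\chi}\sum_N|E_N(\chi)|^2\ll \frac{X^{3}}{Q(\ln X)^{B'}}. \]
Here the factor $Q$ comes from $\sum_r \varphi(r)^{-1}\cdot\varphi(r)\le Q$. Write
\[ S_\chi(\alpha)=\sum_m \chi(m)\Lambda(m)w_1(m/X)e(m\alpha),\qquad S(\alpha)=\sum_n\Lambda(n)w_2(n/X)e(n\alpha). \]
By Parseval, $\sum_N|E_N(\chi)|^2$ is bounded by $\int_{\mathbb{T}}|S_\chi(\alpha)-\text{(major-arc approximation)}|^2|S(\alpha)|^2\,d\alpha$. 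On major arcs around $a/q$ with $q\le(\ln X)^{C}$, I handle both factors by Siegel--Walfisz in the hybrid form for $\chi\cdot$(characters mod $q$); the case where $r$ is small is also covered by this. The approximation errors are then saved by a power of $\ln X$, and integrating $|S|^2$ costs only $X\ln X$.

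\emph{Step 4: minor arcs (the main obstacle).} On the minor arcs I bound the contribution by
\[ \sup_{\alpha\in\mathfrak m}\Big(\sum_{r\le Q}\frac{1}{\varphi(r)}\sideset{}{^*}\sum_{\chi}|S_\chi(\alpha)|^2\Big)\cdot\int|S|^2, \]
with $\int|S|^2\ll X\ln X$. So I need
\[ \sum_{r\le Q}\frac{1}{\varphi(r)}\sideset{}{^*}\sum_\chi|S_\chi(\alpha)|^2\ll \frac{X^2}{Q(\ln X)^{B}} \quad\text{uniformly for }\alpha\in\mathfrak m. \]
I will first try Vaughan's identity. This splits $S_\chi(\alpha)$ into type I and type II sums twisted by $\chi(m)e(m\alpha)$. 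For the type II sums I apply Cauchy--Schwarz together with a large sieve inequality for the family of phases $\chi(n)e(n\alpha)$ over primitive $\chi$ mod $r\le Q$. This combines the multiplicative large sieve with the additive spacing coming from the minor-arc condition on $\alpha$. For the type I sums I use completion mod $r$ and Gauss sums.

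Balancing the resulting terms, roughly of shape $X^2(Q^{-1}+Q^{2}X^{-1/2}+\dots)$ up to logarithms, against the target $X^2/Q$ should force $Q^3\le X^{1/2-\varepsilon}$, which is exactly $\theta<1/6$. The precise bookkeeping in this hybrid large sieve / Vaughan step is where I expect the real difficulty and the source of the exponent $1/6$. The factor $\tau(d)^k$ only contributes powers of $\ln X$, which are absorbed by choosing $B$ large.
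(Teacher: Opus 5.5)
Your Steps 1--2 are fine in principle; they play the role of Lemmas \ref{sumA}, \ref{R} and \ref{sing}. There is, however, a genuine gap in the choice of major arcs, and the whole major-arc analysis that depends on it is missing.

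With $\mathfrak m$ the complement of the arcs at $a/q$, $q\le(\ln X)^C$, the uniform bound you want in Step 4 is false. Take $\alpha=a/q$ with $q$ prime in $((\ln X)^C,Q]$; this point lies in your $\mathfrak m$. The mean value of $\chi(m)e(am/q)$ over $m\in(\mathbb{Z}/q\mathbb{Z})^{\times}$ is $\overline{\chi}(a)\tau(\chi)/\varphi(q)$. Equidistribution of primes mod $q$ (provable for almost all such $q$ by Barban--Davenport--Halberstam) therefore gives $|S_\chi(a/q)|\asymp X/\sqrt{q}$ for a typical primitive $\chi$ mod $q$. Hence
\[ \frac{1}{\varphi(q)}\sideset{}{^*}\sum_{\chi \bmod q}|S_\chi(a/q)|^2 \asymp \frac{X^2}{q} \gg \frac{X^2}{Q(\ln X)^B}. \]
Indeed $S_\chi(\alpha)=\tau(\overline{\chi})^{-1}\sum_{b}\overline{\chi}(b)S^{(1)}(\alpha+b/r)$, so your minor-arc quantity is essentially the paper's $\sum_{d}d^{-1}\sum_{b}|S^{(1)}(\alpha+b/d)|^2$. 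That quantity is large whenever some $\alpha+b/d$ is near $0$.

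So the major arcs must contain all $a/q$ with $q$ up to about $X^{\theta+\varepsilon}$, as in the paper ($P=DX^{\varepsilon}$). On such arcs Siegel--Walfisz is useless. You must expand into characters modulo $[d,q]$, of conductor up to $X^{2\theta+\varepsilon}$, and save an arbitrary power of $\ln X$ on average over them. Even on your small arcs, characters $\chi$ of conductor $r>(\ln X)^C$ are not covered. The paper handles this in the first moment over $N$ (Lemma \ref{Mred}):
\begin{itemize}
\item it evaluates the local factors exactly;
\item it passes to $L^1$-integrals of Dirichlet polynomials via Gallagher's lemma (Lemma \ref{Gal});
\item it bounds these with the Choi--Kumchev mean value theorem and primes in short intervals (Lemmas \ref{D1}, \ref{D21}, \ref{D22}, \ref{E12}); this is where $\theta\le 9/40$ is needed.
\end{itemize}

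Your Step~3 also fails as stated. Cauchy--Schwarz with weights $1/\varphi(r)$ costs a factor $Q$ at small conductors. The term $r=1$ alone would require $\sum_N|E_N(\chi_0)|^2\ll X^{3-\theta}(\ln X)^{-B'}$, a power-saving mean square for the Goldbach error. That is not known unconditionally: a real zero $\tilde\beta>1-\theta/2$ of some Dirichlet $L$-function would put a secondary term of size $X^{\tilde\beta}$ into $E_N(\chi_0)$. The paper uses the second moment only on the minor arcs, with weights $1/d^2$ that cost just $\ln X$ (Lemma \ref{Min}).

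The second gap is that the minor-arc estimate, which is where $1/6$ actually comes from, is only announced. A large sieve inequality with arbitrary coefficients cannot supply it. It gives $\sum_{r\sim R}\sideset{}{^*}\sum_{\chi}|S_\chi(\alpha)|^2\ll (X+R^2)X\ln X$, i.e.\ about $X^2\ln X$ for $R\le X^{1/2}$, which does not beat the threshold you need (compare the remark after Theorem \ref{Lmb}).

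The missing ingredient is a Diophantine count that makes the minor-arc condition usable. Lemma \ref{bohr_dn} bounds the number of $(n,m,d,b)$ with $n\sim N$, $d\sim D$ and $|\alpha+b/d+m/n|\le\delta$ by $X^{\varepsilon}ND(1/q+q\delta+\delta ND)$. It does so by writing $b/d+m/n$ as one fraction with denominator $[d,n]$ and applying Lemma \ref{bohr}. Take the type~I sums, and the type~II sums after Cauchy--Schwarz, from Vaughan's identity with $U=V=X^{1/3}$. Via Lemma \ref{sumw}, both reduce to this count, which yields Theorem \ref{Lmb}:
\[ X^{2+\varepsilon}\bigl(D/q+qD/X+D^2X^{-1/3}\bigr). \]
The term $D^2X^{-1/3}$, coming from type~II ranges up to $X^{2/3}$, is what forces $\theta<1/6$; your balancing $Q^3\le X^{1/2}$ is not derived from anything. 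The terms $D/q+qD/X$ show again that the minor arcs can only consist of $q\in[DX^{\varepsilon},X^{1-\varepsilon}D^{-1}]$.
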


Indeed, we prove a more general version in which one of the $\Lambda$ factors is replaced by a more general sequence such as the indicator function of rough numbers. See Theorem \ref{T1p}.

By a slight modification of the result of Maier-Pomerance \cite[Theorem 3.1]{maier1990unusually} concerning the distribution of generalized twin primes in arithmetic progressions, one can establish such a result with $\theta < c$ for some absolute constant $c>0$. Their argument is based on the work of Montgomery and Vaughan \cite{montgomery1975exceptional} on the bounds for the exceptional set in the Goldbach conjecture. 
A computation by the author using standard zero-density estimates for Dirichlet $L$-functions shows that one may take $c = \frac{10}{314} = 0.0318\ldots$, although this value is probably not optimal.

We use methods that are different from those of \cite{maier1990unusually} in both major and minor arcs. 
We establish a new exponential sum estimate. See Theorem \ref{Lmb}. The exponent $1/6$ in Theorem \ref{T1} comes from this estimate. 
On the major arc, we use a mean value theorem of Choi and Kumchev \cite{choi2006mean} for Dirichlet polynomials, together with an argument of Maynard \cite{maynard2024note}.

As an application of Theorems \ref{T1} and \ref{T1p}, we consider the solvability of the equations of the form $p_1+p_2=N, f(p_1,p_2) \in \mathbb{P}_{r}$ for some function $f:\mathbb{Z} \times \mathbb{Z} \to \mathbb{Z}$ and $r \geq 2$.

Matom{\"a}ki and Z{\'u}{\~n}iga-Alterman \cite{matomaki2025weighted} showed that one can find $\mathbb{P}_3$ if the original and the switched problem have level of distribution at least $0.267$. 
We extend their result and show that one can find $\mathbb{P}_4$ if the original and the switched problem have level of distribution at least $0.1635$. See also the Appendix for similar results for $\mathbb{P}_k, 2 \leq k \leq 7$. 
Since $1/6=0.166\ldots>0.1635$, we obtain the following result. 
\begin{thm}\label{T2}
Let $A>0$ and $X \geq 2$. 
For all but $O(X(\ln{X})^{-A})$ even integers $N \in (X/2,X]$, there exist primes $p_1,p_2$ such that $p_1+p_2=N$ and $p_1-p_2+1$ has at most $4$ prime factors. 
\end{thm}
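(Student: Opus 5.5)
We plan to prove Theorem \ref{T2} with a weighted sieve of Richert type, applied to the sequence
\[ a_n = \sum_{\substack{p_1+p_2=N \\ p_1-p_2+1=n}} (\ln p_1)(\ln p_2)\, w_1(p_1/X)\, w_2(p_2/X). \]
Since $p_2 = N-p_1$, we have $n = 2p_1 - N + 1$. The condition $d \mid n$ is therefore a congruence $p_1 \equiv l \bmod d$ with $l \equiv (N-1)\bar{2} \bmod d$ for odd squarefree $d$. This residue class depends on $N$, which is exactly the situation Theorem \ref{T1} handles. So Theorem \ref{T1} gives, for all but $O(X(\ln X)^{-A})$ even $N$, a level of distribution $X^{\theta}$ for every $\theta<1/6$, with remainder sum $\sum_{d\le X^\theta}\tau(d)^k|R_{d,l}(N)|$ small.

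The local density is $g(p)=\#\{p_1 \bmod p\}/\psi_N(p)$, restricted to $p_1$ with $p_1(N-p_1)\not\equiv 0$ and $2p_1\equiv N-1$. It is $1/(p-2)$ or $1/(p-1)$ when the admissible class is valid, and $0$ when $l\equiv 0$ or $l\equiv N$, i.e.\ $p\mid N-1$ or $p\mid N+1$. Hence we have a sieve problem of dimension $\kappa=1$ on integers $n\le X$.

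The other input is the \emph{switched} problem: estimating sums over $p_1,p_2$ where the prime weight is moved onto $n=p_1-p_2+1$ and the sieve is placed on $p_1$ or $p_2$. Following Matom\"aki--Z\'u\~niga-Alterman \cite{matomaki2025weighted}, we would count triples in which $n$ is prime, or has a large prime factor, and one of $p_1,p_2$ is replaced by a sifted (rough) integer. For this we invoke the general form Theorem \ref{T1p}, with $\Lambda$ in one factor replaced by the indicator of rough numbers. Again the switched congruence depends on $N$, and we obtain level $1/6-\epsilon$ outside an exceptional set of $N$.

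With both levels equal to $1/6-\epsilon$, we apply the weighted sieve. We take weights $1-\sum_{z\le p<y}\lambda(p)\,\1_{p\mid n}$, with Chen/Richert-type parameters and the switching correction subtracting terms with large prime factors. The main terms are evaluated using the linear sieve functions $F,f$ (Rosser--Iwaniec with well-factorable error terms; Theorem \ref{T1} already supplies the needed $\tau(d)^k$-weighted sup-over-$l$ bound). The resulting expression is a positive constant times $\mathfrak{S}(N)X/(\ln X)^{3}$, in which the integrals over $F,f$ must be shown positive. The final step discards $n$ with a repeated prime factor $\ge z$, a contribution of $O(X^{1-\delta})$, and excludes the exceptional $N$ via a union over the $O(1)$ applications.

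The main obstacle is the numerical optimization. We must choose the sieve parameters and the switching so that positivity holds exactly when both levels exceed $0.1635$, rather than the $0.267$ needed for $\mathbb{P}_3$ in \cite{matomaki2025weighted}. This requires computing the integrals explicitly, and we would treat it by numerical integration, as the Appendix indicates for $\mathbb{P}_k$. Since $1/6>0.1635$, choosing $\theta$ close to $1/6$ then concludes the proof.
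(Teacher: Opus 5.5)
There is a genuine gap. Your architecture matches the paper's: a weighted sieve on $n=p_1-p_2+1$, with the lower-bound input from Theorem~\ref{T1} (the class $l\equiv (N-1)\bar 2 \bmod d$ depends on $N$), plus a switched upper bound. However, the proposal stops exactly where the proof of Theorem~\ref{T2} lives.

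\textbf{The missing step: choice of weight and positivity.} Beyond Theorems~\ref{T1} and~\ref{T1p}, the whole content is to exhibit an explicit $F$ and parameters $\theta<1/6$, $s>1/\theta$ with $M_4(F,\theta,s)>0$. You defer this as ``the main obstacle'' and assert that positivity holds once the level exceeds $0.1635$. That number is the conclusion to be established, not an argument for it.

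Worse, the family you name, Richert-type (linearly decreasing) weights $F(u)=\lambda\1_{u\le 1/t}(1-tu)$, is the one the paper's Appendix reports as falling short for $k=4$. Its best threshold there is about $0.16695>1/6$, and at $\theta=1/6$ the author found no parameters in that family with $M_4>-0.025$. These figures are uncertified, but they indicate the failure. The trivial weight needs $0.18352$.

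The paper succeeds with Kuhn's step weight $F(u)=\frac12\1_{u\le 1/t}$, taking $\theta=0.1635$, $t=9.5$, $s=33.5$.
\begin{itemize}
\item Here $G$ takes only the values $1,\frac12,0$ on explicit regions.
\item Hence each $T_r$ reduces to one-dimensional integrals of the recursively defined $c_r$.
\item $T_r=0$ for $r>10$, since $1/s+(r-1)/t>1$.
\item Everything can be certified, giving $M_4\in[0.0373,0.0385]$.
\end{itemize}
Your general form $1-\sum\lambda(p)\1_{p\mid n}$ does contain this weight, but only with the specific choice $\lambda\equiv\frac12$ on $[z,X^{1/t})$ together with the verification. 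With a margin of about $0.003$ below $1/6$, the weight is not a detail to optimize later; it decides whether the method works at level $1/6$.

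\textbf{Secondary point: the switched problem.} Your description (``$n$ prime, or has a large prime factor'', with one $\Lambda$ replaced by the indicator of rough numbers) is the Chen-type switch, which is not what is needed. The correct setup is:
\begin{itemize}
\item The quantity to bound is the $W$-weighted count of $z$-rough $n$ with $r\ge 5$ prime factors.
\item These $n$ are encoded by the convolutions $\beta_{j_1,\dots,j_r}$ of $\Lambda$ over short logarithmic ranges.
\item $p_1$ is relaxed to an integer sifted up to $X^{1/2}$, using the upper-bound linear sieve with $\omega^+(2\theta)=1/\theta$.
\item The condition $d\mid p_1$ becomes $p_2\equiv N \bmod d$ in the equation $2p_2+n=N+1$.
\end{itemize}
This is precisely the second statement of Theorem~\ref{T1p}, with $\Lambda$ on $p_2$ and $\beta$ on $n$. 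It is what produces the term $\frac{2}{\theta}\sum_{r\ge 5}T_r$ in $M_4$.
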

The numerical comparison in the Appendix indicates that the weights appearing in previous applications of the weighted sieve do not suffice for this purpose, since the resulting criteria remain above the level $1/6$. See the Appendix.

We also use Theorem \ref{T1} and Richert's weighted sieve to obtain
\begin{thm}\label{T3}
Let $A>0$ and $X \geq 2$. 
For all but $O(X(\ln{X})^{-A})$ integers $N \in (X/2,X]$ with $6|N$, there exist primes $p_1,p_2$ such that $p_1+p_2=N$ and $2p_1p_2+1$ has at most $13$ prime factors. 
\end{thm}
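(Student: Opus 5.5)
We would prove Theorem \ref{T3} by a weighted sieve on the sequence
\[ a(n) = \sum_{\substack{p_1+p_2=N \\ 2p_1p_2+1 = n}} (\ln p_1)(\ln p_2)\, w_1(p_1/X) w_2(p_2/X), \]
using Theorem \ref{T1} to supply the level of distribution. The first step is to express the congruence $2p_1p_2+1 \equiv 0 \bmod d$ for squarefree $d$ with $(d,2N)=1$ in terms of $p_1$ alone. Since $p_2 = N-p_1$, the condition reads $2p_1(N-p_1)+1 \equiv 0 \bmod d$. This is a quadratic congruence in $p_1$, so by CRT it splits into $\rho_N(d)$ residue classes $l \bmod d$, where $\rho_N(p)$ counts the roots of $2x(N-x)+1$ modulo $p$. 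For each such $l$ we have $(l(N-l),d)=1$ automatically, because $2l(N-l)\equiv -1$. Hence
\[ \sum_{n\equiv 0 \bmod d} a(n) = \sum_{l} \Big(\frac{\mathfrak{S}(N)}{\psi_N(d)}\, W(N) + R_{d,l}(N)\Big). \]
Here $W(N)=\sum_{m+n=N} w_1(m/X)w_2(n/X)$ is the main-term mass, and the error is bounded by $\rho_N(d)\sup_l |R_{d,l}(N)| \le 2^{\omega(d)}\sup_l|R_{d,l}(N)|$. Theorem \ref{T1} with $k=1$ then controls the total remainder up to $D=X^{\theta}$, $\theta<1/6$, for all $N$ outside an exceptional set of size $O(X(\ln X)^{-A})$. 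We can take a countable union of exceptional sets over finitely many choices of $\theta$ and of the weights $w_i$, since only finitely many are needed.

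The next step is the local density $g(p)=\rho_N(p)/\psi_N(p)$. Primes $p\mid N$ give $\rho_N(p)=\#\{x:-2x^2+1\equiv0\}=1+(2/p)$ and $\psi_N(p)=p-1$. Primes $p\nmid 2N$ give roots of $2x^2-2Nx-1$, with discriminant $4N^2+8=4(N^2+2)$, so $\rho_N(p)=1+\left(\frac{N^2+2}{p}\right)$ and $\psi_N(p)=p-2$. The condition $6\mid N$ is needed here. For $p=2$, $n$ is odd. For $p=3$ we get $2p_1p_2+1\equiv 2\cdot(-p_1^2)+1=1-2p_1^2\equiv 1-2=2 \bmod 3$, so $3\nmid n$. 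Thus the small primes cause no local obstruction, and the singular product is positive. The sieve dimension is $\kappa=1$ on average: since the Legendre-symbol terms average to zero, Mertens-type estimates give $\sum_{p\le z} g(p)\ln p = \ln z + O(1)$ uniformly in $N$, after discarding via Siegel-type bounds at most $O(X(\ln X)^{-A})$ values of $N$ whose quadratic character $\left(\frac{N^2+2}{\cdot}\right)$ has large partial sums. Alternatively, since the character is real, one can use only lower-bound sieve axioms of type $\Omega_2(\kappa)$ with $\kappa=1$.

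The final step is Richert's weighted sieve of dimension $1$, with level $D=X^{\theta}$ and $\theta$ close to $1/6$. The sifted elements have size $n\le 2X^2$, so $\ln n/\ln D \approx 2/\theta = 12^+$. Richert's criterion yields $\mathbb{P}_r$ whenever
\[ r > \frac{\ln(2X^2)}{\ln D} - 1 + \nu(\text{parameters}), \]
and with the standard optimal choices $u=1/\theta$-type parameters in dimension $1$ one gets $r=13$ from $\mu=2/\theta\to 12$. We also need the usual removal of $n$ divisible by $p^2$ with $p\ge X^{\epsilon}$. Its contribution is $\ll X^{2-\epsilon'}$-small relative to $X/\ln^{2}X$, by bounding the number of $p_1$ with $p^2\mid 2p_1(N-p_1)+1$ trivially for $p\le X^{1/2}$ and by divisor counting for larger $p$.

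I expect the main obstacle to be uniformity in $N$ of the sieve dimension condition: the density $g$ depends on $N$ through the character of $N^2+2$. We must show that the lower-bound sieve's main term $\mathfrak{S}(N)W(N)\prod_{p<z}(1-g(p))$ is $\gg X/\ln^2X$ for all non-exceptional $N$, and that Richert's numerical constants give exactly $13$. I would first try to handle the character contributions with the lower-bound sieve inequality that needs only $\prod_{w\le p<z}(1-g(p))^{-1}\le (\ln z/\ln w)(1+K/\ln w)$. For real characters this holds with $K$ depending on $L(1,\chi_{N^2+2})$, which is $\gg (\ln X)^{-\epsilon}$ outside few $N$ by Siegel.
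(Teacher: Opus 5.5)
Your reduction matches the paper's. The classes $l\in\Omega_N(d)$ automatically satisfy $(d,l(N-l))=1$, and the remainder is controlled by Theorem~\ref{T1} with the $\tau(d)$ weight. The local densities and the role of $6\mid N$ (which gives $g_N(3)=0$) are as you describe, the $p^2\mid n$ terms are negligible, and Richert's criterion with $2/\theta\to 12$ gives $13$.

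The gap is in the step you flag yourself: the dimension-one hypothesis \eqref{dim1} for $g_N(p)\approx(1+\chi_N(p))/p$, where $\chi_N=\left(\frac{N^2+2}{\cdot}\right)$. Neither of your proposed fixes works.

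\emph{A fixed constant is impossible.} Condition \eqref{dim1} (or $\Omega_2(1)$) with a fixed constant fails for a positive proportion of $N$, not just for $O(X(\ln X)^{-A})$ of them. By the Chinese remainder theorem, a positive proportion of $N$ (depending on $y$) have $\chi_N(p)=1$ for all $5\le p<y$. For these, $\prod_{5\le p<y}(1-g_N(p))^{-1}\asymp(\ln y)^2$, whereas \eqref{dim1} with $w=5$ allows only $\ll(1+L)\ln y$. So $L$ must grow with $X$, and you need a linear sieve whose error term is explicit in $L$. The paper uses Bantle's form, Lemma~\ref{LS}, where $H=C_KL^5(\ln z)^{-1/3}$ is still $o(1)$ for $L=(\ln X)^{1/16}$. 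Weighted-sieve theorems stated with fixed $\Omega_2(1)$ constants cannot be quoted as black boxes.

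\emph{Siegel's theorem cannot supply the bound.} Condition \eqref{dim1} concerns $\sum_{w\le p<y}\chi_N(p)/p$ for all $2\le w<y\le z$, with $z$ a small power of $X$. The conductor of $\chi_N$ is typically of size $X^2$, so no estimate for an individual character reaches this range. Moreover, a lower bound for $L(1,\chi_N)$ points the wrong way, since \eqref{dim1} is an upper bound for partial Euler products. Reality of $\chi_N$ only yields $g_N(p)\le 2/(p-2)$, i.e.\ $\Omega_2(2)$, not $\Omega_2(1)$.

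The missing idea is to average over $N$. Lemma~\ref{Ldim1} bounds the mean square over $N\le X$ of $\sum_{t\le p<2t}\frac1p\left(\frac{N^2+2}{p}\right)$ for $e^{(\ln X)^{1/50}}\le t<X^{1/4}$. The off-diagonal terms $p\ne p'$ reduce, via the Chinese remainder theorem, to products of the complete sums $\sum_{a\bmod p}\left(\frac{a^2+2}{p}\right)=-1$, giving $\ll Xe^{-(\ln X)^{1/50}}$. The primes below $e^{(\ln X)^{1/50}}$ are bounded trivially by Mertens' theorem. This yields \eqref{dim1} with $L=(\ln X)^{1/16}$ for all but $O(Xe^{-(\ln X)^{1/100}})$ values of $N$. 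Feeding this into Lemma~\ref{LS} inside Richert's weighted-sieve decomposition (Lemma~\ref{St2}) gives the lower bound for $S_1-S_2$ uniformly over the non-exceptional $N$. Without such an averaging argument, your proposal does not establish the sieve hypothesis on which the constant $13$ rests.
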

Note that the number $13$ arises as $13=2 \times 6 + 1$.

\subsection{AI tool disclosure}

OpenAI Codex (GPT-5.5) was used in the preparation of this manuscript in the following ways:
\begin{itemize}
\item assistance with the numerical computations reported in the Appendix and with the proof of the asymptotic statement there in the regime \(k\to\infty\);
\item assistance with searching for existing results relevant to the manuscript;
\item proofreading for minor typographical, stylistic, and mathematical errors.
\end{itemize}
Apart from these uses, the arguments and proofs in the paper were written by the author.  All AI-assisted material used in the proofs was checked by the author, who takes full responsibility for the content of the paper.

\section{Exponential sum estimate}

In this section, we prove the following estimate for exponential sums.

\begin{thm}\label{Lmb}
Let $D \leq X$ and $\varepsilon>0$. 
Let $w:\mathbb{R} \to \mathbb{R}$ be a smooth function supported on $[0,1]$. 
Let $\alpha \in [0,1], q \geq 1, (q,a)=1, |\alpha-a/q| \leq 1/q^2$. Then,
\begin{equation} \sum_{d \sim D} \sum_{b \in \aZ{d}} \left| \sum_{n} \Lambda(n) e((\alpha+b/d)n) w(n/X) \right|^2 \ll X^{2+\varepsilon} \left( \frac{D}{q} + \frac{qD}{X} + D^2 X^{-1/3} \right) \label{EXP} \end{equation}
\end{thm}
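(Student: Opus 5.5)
We will prove \eqref{EXP} by decomposing $\Lambda$ with Vaughan's identity (or Heath-Brown's identity) and handling the $d,b$ averaging through a large-sieve-type duality. Write $S(\beta)=\sum_n \Lambda(n)e(\beta n)w(n/X)$. After Vaughan's identity with parameters $U,V$, $S(\beta)$ becomes $O(X^\varepsilon)$ sums of two kinds. Type I sums have the shape $\sum_{m\le M}a_m\sum_n e(\beta mn)w(mn/X)$ with $M\le UV$. Type II sums have the shape $\sum_{m\sim M}\sum_{n}a_m b_n e(\beta mn)w(mn/X)$ with $U\le M\le X/V$. By Cauchy--Schwarz it suffices to bound $\sum_{d\sim D}\sum_{b\bmod d}|T(\alpha+b/d)|^2$ for each piece $T$. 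The key feature is that the points $\alpha+b/d$ with $d\sim D$ contain the Farey-type set $\{b/d\}$, which is $D^{-2}$-spaced but has only about $D^2$ elements. The trivial large sieve therefore gives $(X+D^2)X$, which is useless. We must exploit the fact that $\alpha$ sits near $a/q$.

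\textbf{Type I.} Expand the square and sum over $b\bmod d$ first. Orthogonality gives
\[
\sum_{b\bmod d}|T(\alpha+b/d)|^2=d\sum_{n_1\equiv n_2 (d)}c_{n_1}\overline{c_{n_2}}e(\alpha(n_1-n_2)),
\]
where $c_n=\sum_{m|n,\,m\le M}a_m w(n/X)$. Writing $n_1-n_2=dh$ and summing over $d\sim D$ gives
\[
\sum_{d\sim D} d\sum_h e(\alpha dh)\sum_{n}c_{n+dh}\overline{c_n}.
\]
The diagonal term $h=0$ contributes $\ll D^2X^{1+\varepsilon}$. That term is too large in isolation, so rather than opening the square for the $\Lambda$ sum we should keep the inner structure. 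Concretely, for each fixed $d,b$, apply the classical Vinogradov/Vaughan estimate to the sum at frequency $\alpha+b/d$. Its rational approximation is $(ad+bq)/(qd)$ up to reduction, with error $\le 1/q^2$. Summing over $b$, the denominators $q'$ of these approximations vary, and by counting we expect most of them to have size $\asymp qd$. So the bound should be organized by the denominator $r=qd/(\text{gcd})$.

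\textbf{Type II and the main bound.} For bilinear sums I would use the standard route. Apply Cauchy in $m$, expand in $n_1,n_2$, and then sum over $d,b$ using orthogonality in $b$. This yields a congruence $m(n_1-n_2)\equiv 0\pmod d$ together with the phase $e(\alpha m(n_1-n_2))$. Summing over $m$ in a dyadic range gives the usual $\min(M,\|\alpha k\|^{-1})$ sums over $k=d\cdot(\text{something})$. These are bounded by the classical lemma
\[
\sum_{k\le K}\min\left(\frac{X}{k},\|\alpha k\|^{-1}\right)\ll\left(\frac{X}{q}+K+q\right)\log(qX),
\]
which produces the terms $X^2D/q$ and $X\,qD$. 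The extra factor $D$ comes from the $d$-sum weighted by $d$, divided by the length saving.

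The remaining term $D^2X^{-1/3}X^2$ should come from balancing. The diagonal and near-diagonal contributions are of size roughly $D^2X\cdot M$ or $D^2X\cdot X/M$ in the Type II and Type I parts. Choosing $U=V=X^{1/3}$ makes all of these at most $D^2X^{5/3}$. This matches the exponent $1/3$ and explains why the level is $1/6$: the bound beats the trivial $X^2$ per-$d$ average exactly when $D^2X^{-1/3}<D$ up to the $\log$ powers needed.

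\textbf{Main obstacle.} The hardest step is the Type II sum with $M$ near $X^{1/3}$ or $X^{2/3}$. There the congruence $m(n_1-n_2)\equiv0\pmod d$ with $d$ varying must be handled by detecting common factors $(m,d)$. Those common factors have to be summed with divisor-bound losses, absorbed into the $X^\varepsilon$, while still retaining the phase cancellation in $\alpha$. I would first try to split by $g=(m,d)$, reduce to $d'=d/g$ coprime to $m$, and then apply the $\min$-lemma above in the variable $k=n_1-n_2$ restricted to multiples of $d'$.
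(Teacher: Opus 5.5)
\textbf{The gap is in your Type I estimate.} You never carry it out, and the route you settle on cannot work.

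Applying the classical Vinogradov bound separately at each frequency $\alpha+b/d$, with denominator $q'\asymp qd$, is not legitimate. The error $|\alpha+b/d-(ad+bq)/(qd)|=|\alpha-a/q|$ is only $\le q^{-2}$, not $\le (qd)^{-2}$. So the spacing argument behind $\sum_{m\le M}\min(X/m,\|\beta m\|^{-1})\ll (X/q'+M+q')\log X$ is unavailable, and the claim that most denominators are $\asymp qd$ is only a heuristic.

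Worse, a pair-by-pair bound of this kind still loses even if you grant $q'\asymp qd$. Summing $X\cdot X^{\varepsilon}(X/q'+M+q')$ over the $\asymp D^2$ pairs $(d,b)$ (via $|T|^2\le X|T|$) produces a term $X^{1+\varepsilon}qD^3$. That is $D^2$ times the $X^{1+\varepsilon}qD$ allowed by \eqref{EXP}, and it is fatal for $q$ near $X/P$ on the minor arcs.

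Two smaller points. The orthogonality route you dropped was not blocked by its diagonal: $D^2X^{1+\varepsilon}$ is far below $D^2X^{5/3}$; the real obstruction there is the shifted correlation $\sum_n c_{n+dh}\overline{c_n}$. Also, the Type I bound must increase with $M$, since Vaughan's Type I sums include $M$ as small as $1$. A term $D^2X\cdot X/M$ in the Type I part, as your balancing paragraph suggests, would be fatal.

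\textbf{The missing idea} is to average over $(d,b)$ jointly with the Type I variable and never re-approximate $\alpha+b/d$. Move $b/d$ to the other side and merge it with $n^*/m$, so that only the given approximation $a/q$ of $\alpha$ is ever used. Concretely, the paper does the following.
\begin{enumerate}
\item It uses the smooth weight (Lemma \ref{sumw}) to get $|T|\ll (X/M)\,\#\{m\sim M,\ n^*:\ |\alpha+b/d-n^*/m|\ll X^{\varepsilon-1}\}+X^{-10}$.
\item It bounds $|T|^2\le X|T|$.
\item It counts all quadruples $(d,b,m,n^*)$ at once. The quantity $n^*/m-b/d$ is a fraction with denominator $dm/g\asymp DM/g$, where $g=(d,m)$, attained with multiplicity $\ll g$ (Lemma \ref{bohr_dn}).
\item A single application of Lemma \ref{bohr} then gives \eqref{exp_I}.
\end{enumerate}

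\textbf{Your Type II mechanism is sound}, a genuine alternative to the paper's Poisson-plus-Lemma \ref{bohr_dn} counting, and it also repairs Type I.

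For Type II, after Cauchy--Schwarz in $m$ and orthogonality in $b$, put $h=n_1-n_2$. Use $d\mid mh\iff d/(d,h)\mid m$; splitting by $(d,h)$ rather than $(m,d)$ makes the $m$-sum an arithmetic progression without a M\"obius step. The cancellation must come from $m$, not $h$, since $h$ carries the arbitrary coefficients $b_{n_1}\overline{b_{n_2}}$. The $m$-sum is then $\ll\min(X/k,\|\alpha k\|^{-1})$ with $k=[d,|h|]\le 2ND$. Each $k$ arises from $\ll\tau(k)^2$ pairs, so the classical lemma in $k$ yields exactly \eqref{exp_II}.

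For Type I, run the same argument with the roles reversed.
\begin{enumerate}
\item Cauchy--Schwarz over $m$ costs only $\sum|a_m|^2\ll M^{1+\varepsilon}$.
\item Orthogonality in $b$ gives $d/(d,m)\mid h$.
\item Summing now over the coefficient-free $h$ gives $(X/m)\min(X/[d,m],\|\alpha[d,m]\|^{-1})$.
\item The $\min$-lemma in $j=[d,m]\le 4DM$ then gives $X^{2+\varepsilon}(D/q+qD/X+MD^2/X)$, which is \eqref{exp_I}.
\end{enumerate}
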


For the proof of Theorem \ref{T1}, a weaker form of \eqref{EXP} with $\aZ{d}$ replaced by $\pZ{d}$ would suffice. However, the argument below treats both cases in the same way, so we state the estimate in the present form.
In this setting, the large sieve gives the bound $\ll X^2(\ln X)^2$ when $D \leq X^{1/2}$, whereas Theorem \ref{Lmb} gives a sharper estimate when $q \in [X^{\varepsilon}D,X^{1-\varepsilon}D^{-1}]$ and $D \leq X^{1/6 - \varepsilon}$ for some $\varepsilon>0$.

The following two lemmas are fundamental. 

For $x \in \mathbb{R}$, let $\|x\|$ denote the distance from $x$ to the nearest integer. 
\begin{lem}\label{sumw}
Let $w:\mathbb{R} \to \mathbb{R}$ be a smooth, compactly supported function. 
Let $\alpha \in \mathbb{R}, \varepsilon>0$. Then
\begin{equation} \sum_{n} e(\alpha n) w(n/X) \ll X\#\{ n^* \in \mathbb{Z} : |\alpha - n^*| \leq X^{\varepsilon-1} \} + X^{-100} \label{ew_1} \end{equation}
and
\begin{equation} \sum_{n} e(\alpha n) w(n/X) \ll \frac{1}{\|\alpha\|}. \label{ew_2} \end{equation}
Here, the implicit constant depends only on $\varepsilon$ and $\{\|w^{(j)}\|_{\infty} : j \leq \lceil 200/\varepsilon \rceil \}$. 
\end{lem}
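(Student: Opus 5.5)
The plan is to use Poisson summation for the first bound and partial summation for the second. Write $S(\alpha)=\sum_n e(\alpha n)w(n/X)$. Poisson summation gives
\[ S(\alpha) = X\sum_{k\in\mathbb{Z}} \hat w\bigl(X(k-\alpha)\bigr), \qquad \hat w(\xi)=\int_{\mathbb{R}} w(t)e(-\xi t)\,dt . \]
Since $w$ is smooth and compactly supported, integrating by parts $J$ times gives
\[ \hat w(\xi)\ll_J \|w^{(J)}\|_\infty (1+|\xi|)^{-J}, \]
using that the support has bounded length.

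For \eqref{ew_1} we split the sum over $k$ into two ranges.
\begin{itemize}
\item If $|k-\alpha|\le X^{\varepsilon-1}$, we bound $|\hat w|\le \|w\|_1\ll 1$. These terms contribute $\ll X\#\{n^*:|\alpha-n^*|\le X^{\varepsilon-1}\}$.
\item If $|k-\alpha|>X^{\varepsilon-1}$, then $X|k-\alpha|\ge X^{\varepsilon}$, and also $X|k-\alpha|\gg X(1+|k-\alpha|)$ once $|k-\alpha|\ge 1$. Taking $J=\lceil 200/\varepsilon\rceil$, each such term is $\ll X\,(X|k-\alpha|)^{-J}$.
\end{itemize}
Summing over the remaining $k$ gives $\ll X\cdot X^{-\varepsilon J}+X\sum_{|m|\ge1}(X|m|)^{-J}\ll X^{1-200}$. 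This is certainly $\ll X^{-100}$ for $X\ge1$; for bounded $X$ the claim is trivial. The implied constant depends only on $\|w^{(j)}\|_\infty$ for $j\le J$, together with $\varepsilon$ and the support.

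For \eqref{ew_2} we use Abel summation. Reduce $\alpha$ modulo $1$ so that $\|\alpha\|=|\alpha|$, and assume $\|\alpha\|\ne0$. Put $E(u)=\sum_{n\le u}e(\alpha n)$. The geometric-sum bound gives $|E(u)|\le 1/|\sin\pi\alpha|\le 1/(2\|\alpha\|)$ uniformly in $u$. Then
\[ S(\alpha)=-\int E(u)\,X^{-1}w'(u/X)\,du \ll \frac{1}{\|\alpha\|}\int X^{-1}|w'(u/X)|\,du=\frac{\|w'\|_1}{\|\alpha\|}. \]
This is the required bound, with constant depending only on $\|w'\|_\infty$ and the support.

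No step is a real obstacle. The only care needed is bookkeeping: making sure the number of integrations by parts, $J\approx 200/\varepsilon$, matches the stated dependence of the implied constant, and handling the terms with $|k-\alpha|$ large by the extra decay factor so that the sum over $k$ converges.
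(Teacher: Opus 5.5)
Your argument is correct. For \eqref{ew_1} it is the paper's argument: Poisson summation, $j$-fold integration by parts giving $X\hat w(X(n^*-\alpha))\ll X(1+X|\alpha-n^*|)^{-j}$, and $j=\lceil 200/\varepsilon\rceil$ for the terms with $|\alpha-n^*|>X^{\varepsilon-1}$. The paper first discards the terms with $|\alpha-n^*|>1/2$ and then handles the at most two remaining ones, but this is the same split.

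For \eqref{ew_2} you take a different route. The paper stays inside the Poisson expansion and takes $j=1$ for the term(s) with $|\alpha-n^*|\le 1/2$, which gives $X/(1+X\|\alpha\|)\le 1/\|\alpha\|$. The remaining terms are $O(X^{-100})$ and are absorbed because $1/\|\alpha\|\ge 2$. You instead use Abel summation against the geometric-sum bound $|\sum_{M<n\le u}e(\alpha n)|\le 1/(2\|\alpha\|)$. The paper's route gets both estimates from a single formula. Yours is more elementary: it gives the explicit constant $\|w'\|_1/2$, uses only one derivative of $w$, and never touches the tail of the dual sum.

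Two small bookkeeping points.
\begin{itemize}
\item As written, $E(u)=\sum_{n\le u}e(\alpha n)$ does not converge. Start the sum just below the support of $w(\cdot/X)$; the boundary terms then vanish and the geometric bound is unchanged.
\item Your tail estimate $X\sum_{|m|\ge1}(X|m|)^{-J}\ll X^{1-J}$ is only $\ll X^{-100}$ when $J$ is large, so it fails for large $\varepsilon$. This is harmless: you may assume $\varepsilon\le 1$. For $\varepsilon\ge1$ the set $\{n^*:|\alpha-n^*|\le X^{\varepsilon-1}\}$ is nonempty, and \eqref{ew_1} follows from the trivial bound $\ll X$.
\end{itemize}
Your remark that the implied constant also depends on the length of the support of $w$ is correct, and this dependence is implicit in the paper's proof as well.
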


\begin{proof}
By the Poisson summation formula, we have
\[ \sum_{n} e(\alpha n) w(n/X) = X \sum_{n^*} \hat{w}(X(n^*-\alpha)), \]
where $\hat{w}(x) = \int_{\mathbb{R}} w(y) e(-xy) dy$. Integrating by parts $j$ times, we see that
\[ \hat{w}(x) = \frac{1}{(2\pi i x)^j} \int_{\mathbb{R}} w^{(j)}(y) e(-xy) dy \ll_j \|w^{(j)}\|_{\infty} |x|^{-j} \quad (x \neq 0) \]
and therefore
\[ X \hat{w}(X(n^*-\alpha)) \ll \frac{X}{(1+X|\alpha-n^*|)^j} \]
for any $j \geq 0$. Thus, the contribution from the terms with $|\alpha-n^*|>1/2$ is $O(X^{-100})$. 
If $|\alpha-n^*|\leq1/2$, we take $j=1, \lceil 200/\varepsilon \rceil$. This yields \eqref{ew_1} and \eqref{ew_2}. 
\end{proof}

\begin{lem}\label{bohr}
Let $\alpha \in \mathbb{R}, q \geq 1, (a,q)=1, |\alpha-a/q| \leq 1/q^2$ and $M,\delta>0$. 
\begin{equation} \#\{ M<m \leq 2M,n \in \mathbb{Z} : |\alpha - n/m| \leq \delta \} \ll M \left( \frac{1}{q} + q \delta + \delta M \right)  \label{qmn} \end{equation}
\end{lem}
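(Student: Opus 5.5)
We need to bound the number of pairs $(m,n)$ with $M<m\le 2M$, $n\in\mathbb{Z}$ and $|m\alpha - n| \le \delta m \le 2\delta M$. For each $m$ there are $O(1+\delta M)$ admissible $n$ when $\delta M\ge 1$; otherwise $n$ is unique if it exists. So it suffices to bound
\[ \#\{M<m\le 2M : \|m\alpha\| \le 2\delta M\} \ll M\left(\frac1q + q\delta + \delta M\right) \]
in the case $\delta M$ small. When $\delta M \gg 1$ the trivial count $M(1+\delta M)\ll M\delta M$ already suffices. Write $\eta = 2\delta M$ for the target window.

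The plan is the classical Vinogradov-type counting argument. Cover $(M,2M]$ by $\lceil M/q\rceil + 1 \ll 1 + M/q$ blocks of $q$ consecutive integers. Within one block $m=m_0+r$, $0\le r<q$, write $\alpha=a/q+\beta$ with $|\beta|\le 1/q^2$. Then
\[ m\alpha = m_0\alpha + ra/q + r\beta, \qquad |r\beta|\le 1/q. \]
As $r$ runs over $[0,q)$, the residues $ra \bmod q$ are distinct. Hence the points $ra/q$ are the $q$ equally spaced points $j/q$, each perturbed by at most $1/q$ and shifted by the common quantity $m_0\alpha$. The number of $r$ with $\|m\alpha\|\le\eta$ is therefore at most the number of $j \bmod q$ with $\|j/q + c\| \le \eta + 1/q$ for a fixed $c$. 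This is $\ll 1 + q\eta$.

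Summing over blocks gives
\[ \ll \left(1+\frac Mq\right)(1+q\eta) = 1 + q\eta + \frac Mq + M\eta. \]
The terms $M/q$ and $M\eta \ll \delta M^2$ are already of the required shape. The term $q\eta\ll q\delta M$ is $\le M\cdot q\delta$, also fine.

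The only problematic term is the constant $1$, which is not obviously $\ll M(1/q+q\delta+\delta M)$; this is where I expect the main care to be needed. Two cases remain.

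If $q\le M$, then $1\le M/q$ and the term is absorbed.

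If $q>M$, the block argument is wasteful. Instead I will use directly that distinct $m,m'\le 2M<2q$ give points $m\alpha$ whose differences $(m-m')\alpha$ satisfy
\[ \|(m-m')\alpha\| \ge \|(m-m')a/q\| - |m-m'|/q^2 \ge 1/q - 2M/q^2 \gg 1/q \]
when $q\ge 4M$. Here $q\nmid (m-m')$ because $0<|m-m'|<q$. The points are thus $1/q$-spaced, so at most $1+q\eta$ of them lie in an interval of length $2\eta$.

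The residual $1$ then means at most one $m$. That single $m$ contributes $O(1+\delta M)$ pairs, and this is still not covered by the bound if the right side is $<1$. Handling it needs an adjustment: either the lemma tacitly allows an additive $O(1)$, or one checks that a solution forces $\delta\ge\|m\alpha\|/m\gg 1/(qM)$. Indeed, when $q>2M$ we have $q\nmid m$, so
\[ \|m\alpha\|\ge 1/q - 2M/q^2 \gg 1/q, \]
hence $\delta\gg 1/(qM)$ and $Mq\delta\gg 1$. This gives $1\ll Mq\delta$, which absorbs the term. The intermediate range $M<q<4M$ is covered by $1\le 4M/q$.
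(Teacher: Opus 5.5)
Your proof is correct and takes essentially the paper's route: reduce to counting $m$ with $\|m\alpha\|\ll\delta M$, split $(M,2M]$ into blocks of $q$ consecutive integers to get $\ll(1+M/q)(1+q\delta M)$, and remove the stray $+1$ when $q\ge 4M$ using $\|m\alpha\|\ge 1/q-2M/q^2\ge 1/(2q)$. The only difference is how that bound is used: the paper counts the admissible $r$ directly, while you show that any solution forces $Mq\delta\gg 1$. One small correction is that this lower bound is $\gg 1/q$ only for $q\ge cM$ with $c>2$, not for every $q>2M$; your separate treatment of $M<q<4M$ already covers this, and your spacing argument for $q\ge 4M$ is not needed.
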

\begin{proof}
If $M \delta \geq 1/4$, then there are $O(M\delta)$ integers $n$ for each $m$, and thus the claim follows. Thus, we may suppose that $M \delta < 1/4$. In this case, the left-hand side of \eqref{qmn} is bounded by
\[ \#\{ M<m \leq 2M : \| \alpha m\| \ll \delta M \} \]
since $n$ is unique for each $m$. Write $m=qs+r$ with $0< r \leq q$. Then $\|\alpha m\| =\|ar/q+(qs+r) \epsilon \|$, where $\epsilon=\alpha-a/q$.  

If $M \leq q/4$, then $s=0$ and $0<r\leq q/2$. Thus, we have
\[ \|\alpha m\| \geq \|ar/q\|-|\epsilon| q/2 \geq \|ar/q\|/2 \]
and therefore, there are $O(q \delta M)$ possible values of $r$. This gives \eqref{qmn}. 

If $M \geq q/4$, there are $O(M/q)$ possible values of $s$, and for each $s$, there are $q \delta M+1$ possible values of $r$. This gives \eqref{qmn}. 
\end{proof}

\begin{lem}\label{bohr_dn}
Let $\varepsilon>0$, and let $D,N \leq X$. Let $\alpha \in [0,1]$ with $q \geq 1, (a,q)=1$ and $|\alpha-a/q| \leq 1/q^2$. Then we have
\begin{equation} \label{exp_I_II} \#\{n \sim N, m \in \mathbb{Z},d \sim D, 1 \leq b \leq d : |\alpha+b/d + m/n| \leq \delta \} \ll X^{\varepsilon} ND \left( \frac{1}{q} + q \delta + \delta ND \right) \end{equation}
for all $\delta>0$. Here, the implicit constant depends only on $\varepsilon$. 
\end{lem}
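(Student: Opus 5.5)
Writing $b/d + m/n = (bn+md)/(dn)$, the condition $|\alpha+b/d+m/n|\le\delta$ says that $\alpha$ lies within $\delta$ of $-k/\ell$ with $\ell = dn \asymp ND$ and $k = bn+md$. The plan is to reduce to Lemma \ref{bohr}, applied with modulus range $M\asymp ND$, and to control the multiplicity of the map $(n,m,d,b)\mapsto(\ell,k)=(dn,\,bn+md)$.

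\textbf{Step 1: the map to $(\ell,k)$.} Put $\ell=dn$ and $k=bn+md$. Each $(n,m,d,b)$ in the set yields a pair $(\ell,k)$ with $\ell\in(ND,4ND]$ and $|\alpha+k/\ell|\le\delta$. Cover $(ND,4ND]$ by $O(1)$ dyadic ranges $(M,2M]$ and apply Lemma \ref{bohr} to $-\alpha$. Note that $-\alpha$ has the rational approximation $-a/q$ with the same $q$ and $(-a,q)=1$; any reduction of $\alpha$ modulo $1$ is absorbed by shifting $k$. This bounds the number of pairs $(\ell,k)$ by $\ll ND(1/q+q\delta+\delta ND)$.

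\textbf{Step 2: multiplicity bound.} Fix $(\ell,k)$. The number of factorizations $\ell=dn$ with $d\sim D$, $n\sim N$ is at most $\tau(\ell)\ll X^{\varepsilon}$, since $\ell\le 4X^2$. Once $d,n$ are fixed, we count pairs $(b,m)$ with $1\le b\le d$ and $bn+md=k$. Let $g=(d,n)$. The congruence $bn\equiv k \pmod d$ determines $b$ modulo $d/g$, so there are $g$ choices of $b\in[1,d]$, and each choice determines $m$.

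\textbf{Step 3: the obstacle, the factor $g$.} The gcd $g$ need not be small, so summing crudely over it is dangerous. The fix is to reparametrize. Write $d=gd'$ and $n=gn'$. Then
\[
\frac{b}{d}+\frac{m}{n}=\frac{bn'+md'}{gd'n'},
\]
and the count becomes a sum over $g$ of tuples with denominator $\ell'=gd'n'$ of size $\asymp ND$ and numerator $k'=bn'+md'$. For fixed $(g,d',n')$ with $(d',n')=1$, the map $(b,m)\mapsto k'$ is injective up to the $g$-fold freedom $b\mapsto b+d'$, $m\mapsto m-n'$. Consequently a pair $(\ell',k')$ is hit at most $g\cdot\tau_3(\ell')$ times overall.

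A cleaner route to the same end: for fixed $d,n$, as $b$ runs over $[1,d]$ and $m$ over $\mathbb{Z}$, the value $b/d+m/n$ runs over $\frac{1}{\mathrm{lcm}(d,n)}\mathbb{Z}$, with each value taken exactly $g$ times. So the count equals
\[
\sum_{d\sim D}\sum_{n\sim N} g \cdot \#\Bigl\{k\in\mathbb{Z}: \Bigl|\alpha+\frac{k}{\mathrm{lcm}(d,n)}\Bigr|\le\delta\Bigr\}.
\]
Grouping by $g$, with $L=\mathrm{lcm}(d,n)\asymp ND/g$, we have $g\le D$ (or $g\le N$), and each $L$ arises from at most $X^{\varepsilon}$ pairs $(d,n)$ once $g$ is fixed. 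Lemma \ref{bohr} with $M\asymp ND/g$ then gives a contribution
\[
\ll g\,X^{\varepsilon}\,\frac{ND}{g}\Bigl(\frac1q+q\delta+\frac{\delta ND}{g}\Bigr)\le X^{\varepsilon}ND\Bigl(\frac1q+q\delta+\delta ND\Bigr)
\]
for each $g$.

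\textbf{Step 4: summing over $g$.} Summing over the $\le X$ values of $g$ naively costs an extra factor $X$, which is too much. Instead, restrict to $g\mid\ell$: parametrize by $\ell=dn$ (not the lcm) and sum over $g\mid\ell$, so that the extra factor is at most $\tau(\ell)\ll X^{\varepsilon}$. More precisely, the point $k/L$ equals $kg/\ell$, so every counted point is an approximation with denominator $\ell=dn\asymp ND$. The total count is therefore at most
\[
\sum_{\ell}\ \sum_{\substack{d n=\ell}} g\cdot\#\{k': |\alpha+k'/\ell|\le\delta\} \le X^{2\varepsilon}\,\#\{(\ell,k')\},
\]
since $g\le\tau(\ell)$-many divisor choices cannot absorb $g$ itself. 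The real difficulty is the size of $g$, not the number of choices. To handle it, I will use the lcm grouping from Step 3, which gives a uniform bound per $g$, and limit the sum to $O(X^{\varepsilon})$ relevant $g$ by noting that $g$ must divide both $d$ and $n$. This makes the per-$(d,n)$ multiplicity $g$ times a count at denominator $ND/g$, and the $q\delta$ and $\delta ND$ terms are harmless in this form. The $1/q$ term needs care here, and this is where I expect the main obstacle to lie.
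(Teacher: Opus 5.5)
There is a genuine gap: you never complete the summation over $g$, and your attempts to do so in Step 4 are incorrect. Your reparametrization in Step 3 is the right one, and it is the paper's. For fixed $(d,n)$ with $g=(d,n)$, $d=gd'$, $n=gn'$, the values $b/d+m/n$ run over $c/(gd'n')$ with $c\in\mathbb{Z}$, each value hit exactly $g$ times.

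Your per-$g$ bound $X^{\varepsilon}ND(1/q+q\delta+\delta ND/g)$ applies Lemma \ref{bohr} to \emph{all} $L\asymp ND/g$, discarding the fact that $g\mid L$. Summed over the up to $2\min(D,N)$ values of $g$, it is worse by a factor $\min(D,N)$ in both the $1/q$ term and the $q\delta$ term, so the $q\delta$ term is not harmless either. The claim that only $O(X^{\varepsilon})$ values of $g$ are relevant is false, since $g=(d,n)$ takes all values up to $2\min(D,N)$ as $(d,n)$ vary. The displayed bound $\le X^{2\varepsilon}\#\{(\ell,k')\}$ simply drops the weight $g$, as you yourself observe.

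The missing idea is to group $g$ dyadically instead of fixing it. For $g\sim G$ the weight satisfies $g\le 2G$, and $L=gd'n'=dn/g$ lies in $(ND/(2G),\,4ND/G]$. The map $(g,d',n')\mapsto L$ has fibres of size at most $\tau_3(L)\ll X^{\varepsilon}$. Hence the whole block $g\sim G$ contributes
\[
\ll G\,X^{\varepsilon}\,\#\Bigl\{L\asymp \tfrac{ND}{G},\ c\in\mathbb{Z} : \Bigl|\alpha+\frac{c}{L}\Bigr|\le\delta\Bigr\}
\ll X^{\varepsilon}ND\Bigl(\frac1q+q\delta+\frac{\delta ND}{G}\Bigr)
\]
by Lemma \ref{bohr}. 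There are only $O(\ln X)$ blocks, which gives the lemma.

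Summing over $g$ inside a block costs nothing, because distinct triples $(g,d',n')$ with the same product $L$ number at most $\tau_3(L)$. Your fixed-$g$ version instead counts every $L\asymp ND/g$ afresh for each $g$, and that is where the loss comes from. The dyadic block also handles the $1/q$ term you flagged as the main obstacle: at scale $ND/G$ it is $ND/(Gq)$, and multiplying by the weight $G$ gives exactly $ND/q$.
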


\begin{proof}
Since $\alpha+b/d \ll 1$, we may suppose that $m \ll n$. 
For each $d,n$, let $g=(d,n)$ and write $d=d'g, n=n'g$. For each $c \ll d'n'g$, we consider the equation
\[ \frac{b}{d} + \frac{m}{n} = \frac{c}{d'n'g} \]
where $(b,m) \in \mathbb{Z}^2, b \ll d, m \ll n$. Since $bn' \equiv c \bmod d'$ and $(d',n')=1$, there are $\ll g$ possible values of $b$. For each $b$, $m$ is unique. Thus the number of solutions to this equation is $\ll g$. Hence, the left-hand side of \eqref{exp_I_II} is
\begin{align*}
&\ll (\ln{X}) \max_{G \ll \min(D,N)} G \#\{n' \sim N/G, d' \sim D/G, g \sim G, c \ll d'n'g : |\alpha+c/d'n'g| \leq \delta \}, \\
&\ll X^{\varepsilon} \max_{G \ll \min(D,N)} G \#\{m \sim ND/G, n \ll m : |\alpha+n/m| \leq \delta \}. 
\end{align*}
This and Lemma \ref{bohr} complete the proof. 
\end{proof}

We now prove Theorem \ref{Lmb}. 
The proof is based on a straightforward application of the above lemmas. 
A variant using large-sieve-type estimates might lead to an improved result.

\begin{proof}[Proof of Theorem \ref{Lmb}]

For any given $U,V \geq 1$, by Vaughan's identity, $\sum_{n} \Lambda(n) e((\alpha+b/d)n) w(n/X)$ can be written as a linear combination of $O((\ln{X})^2)$ sums of the form
\[ \sum_{\substack{m,n \\ m \sim M}} a_m e((\alpha+b/d)mn) w(mn/X), \]
where $M \leq UV$ and $(a_m)$ is a sequence with $|a_m| \leq X^{o(1)}$, and
\[ \sum_{\substack{m \sim M \\ n \sim N}} a_m b_n e((\alpha+b/d)mn) w(mn/X), \]
where $M \geq U, N \geq V, MN \asymp X$ and $(a_m),(b_n)$ are sequences with $|a_m|,|b_n| \leq X^{o(1)}$. Thus, setting $U=V=X^{1/3}$, the theorem follows from the Type I bound
\begin{equation}
\sum_{d \sim D} \sum_{b \in \aZ{d}} \left| \sum_{\substack{m,n \\ m \sim M}} a_m e((\alpha+b/d)mn) w(mn/X) \right|^2 \ll X^{2+2\varepsilon} \left( \frac{D}{q} + \frac{qD}{X} + \frac{MD^2}{X} \right), \label{exp_I}
\end{equation}
and the Type II bound
\begin{equation}
\sum_{d \sim D} \sum_{b \in \aZ{d}} \left| \sum_{\substack{m \sim M \\ n \sim N}} a_m b_n e((\alpha+b/d)mn) w(mn/X) \right|^2 \ll X^{2+2\varepsilon} \left( \frac{D}{q} + \frac{qD}{X} + \frac{MD^2}{X} + \frac{ND^2}{X} \right), \label{exp_II}
\end{equation}
for any given $\varepsilon>0$, $M,N$ with $MN \asymp X$ and sequences $(a_m),(b_n)$ with $|a_m|,|b_n|\leq 1$.

We show \eqref{exp_II}. By the Cauchy--Schwarz inequality and interchanging the order of summation, we have
\begin{align*}
\left| \sum_{\substack{m \sim M \\ n \sim N}} a_m b_n e((\alpha+b/d)mn) w(mn/X) \right|^2 &\leq M \sum_{m} \left| \sum_{n \sim N} b_n e((\alpha+b/d)mn) w(mn/X) \right|^2, \\
&\leq M \sum_{n_1,n_2 \sim N} \left| \sum_{m} e((\alpha+b/d)(n_1-n_2)m) w(mn_1/X) w(mn_2/X) \right|. \\
\end{align*}
Since $w(\cdot n_1/N) \times w(\cdot n_2/N)$ is a smooth, compactly supported function and its derivatives are bounded with constants independent of $n_1,n_2$, by Lemma \ref{sumw}, this is
\begin{align*}
&\ll M^2 \sum_{n_1,n_2 \sim N} \#\{m^* \in \mathbb{Z}:|(\alpha+b/d)(n_1-n_2)-m^*| \leq N/X^{1-\varepsilon} \} + X^{-10}, \\
&\ll XM + X M \#\left\{1 \leq n \leq N, m^* \in \mathbb{Z}:|\alpha+b/d - m^*/n| \leq \frac{N}{n X^{1-\varepsilon}} \right\} + X^{-10}.
\end{align*}
We divide $n$ into dyadic ranges, and use Lemma \ref{bohr_dn}. This gives
\begin{align*}
&\sum_{d \sim D} \sum_{b \in \aZ{d}} \#\left\{1 \leq n \leq N, m^* \in \mathbb{Z}:|\alpha+b/d - m^*/n| \leq \frac{N}{n X^{1-\varepsilon}} \right\}, \\
&\ll X^{2\varepsilon} \left( \frac{ND}{q} + \frac{qND}{X} + \frac{(ND)^2}{X} \right).
\end{align*}
This gives \eqref{exp_II}.

To prove \eqref{exp_I}, we simply apply
\[ \left| \sum_{\substack{m,n \\ m \sim M}} a_m e((\alpha+b/d)mn) w(mn/X) \right| \ll X, \]
and
\[ \left| \sum_{\substack{m,n \\ m \sim M}} a_m e((\alpha+b/d)mn) w(mn/X) \right| \ll \frac{X}{M} \#\{ m \sim M, n^* \in \mathbb{Z} : |\alpha+b/d-n^*/m| \ll X^{\varepsilon-1} \} + X^{-10}. \]
Thus the left-hand side of \eqref{exp_I} is bounded by
\[ \frac{X^2}{M} \#\{ d \sim D, b \in \aZ{d}, m \sim M, n^* \in \mathbb{Z} : |\alpha+b/d-n^*/m| \ll X^{\varepsilon-1} \} + X^{-1}. \]
This and Lemma \ref{bohr_dn} give \eqref{exp_I}. 
\end{proof}

\section{Level of distribution of Goldbach primes}

To prove Theorem \ref{T2}, we need Theorem \ref{T1} and an analogue of Theorem \ref{T1} for equations of the form $2m+n=N+1$, in which one of the $\Lambda$ factors is replaced by a structured weight. 
Since the proofs are essentially the same, we prove a generalized version in both settings. 

\begin{thm}\label{T1p}
Let $\theta \in (0,1/6)$ and let $\eta>0$ be sufficiently small. 
Let $k \geq 1$ and $\beta$ be a function of the form
\[ \beta(n) = \sum_{\substack{n_1 \cdots n_k=n \\ N_i<n_i \leq N'_i}} \Lambda(n_1) \cdots \Lambda(n_k), \]
where $N_1=1,N'_1=X$ if $k=1$, and $X^{\eta}<N_i <N'_i \leq 2N_i$, $N_1 \cdots N_k \leq X$ if $k>1$. 
Let $w_1,w_2$ be smooth functions supported on $[\eta,1]$. 

Let $A>0$ and $X \geq 2$. For all but $O(X(\ln{X})^{-A})$ integers $N \in (X/2,X]$, we have
\[ \sum_{d \leq X^{\theta}} \sup_{l:(d,l(N-l))=1} \left| \sum_{\substack{m+n=N \\ m \equiv l \bmod d}} \Lambda(m) \beta(n) w_1(m/X) w_2(n/X) - M_0 \right| \ll \frac{X}{(\ln{X})^A} \]
where $M_0=M_0(N,d)$ is given by
\[ M_0 = \frac{\mathfrak{S}(N)}{\psi_N(d)} \sum_{m+n=N} \beta(n) w_1(m/X) w_2(n/X), \quad \mathfrak{S}(N) = 2 \1_{2 | N} \prod_{p>2} \left( 1 + \frac{c_p(N)}{\varphi(p)^2} \right). \]

For all but $O(X(\ln{X})^{-A})$ integers $N \in (X/2,X]$, we have
\[ \sum_{\substack{d \leq X^{\theta} \\ 2 \nmid d}} \sup_{l:(d,l(N-2l))=1} \left| \sum_{\substack{2m+n=N \\ m \equiv l \bmod d}} \Lambda(m) \beta(n) w_1(m/X) w_2(n/X) - M'_0 \right| \ll \frac{X}{(\ln{X})^A} \]
where $M'_0=M'_0(N,d)$ is given by
\[ M'_0 = \frac{\mathfrak{S}'(N)}{2\psi_N(d)} \sum_{2m+n=N} \beta(n) w_1(m/X) w_2(n/X), \quad \mathfrak{S}'(N) = 2 \1_{2 \nmid N} \prod_{p>2} \left( 1 + \frac{c_p(N)}{\varphi(p)^2} \right). \]
\end{thm}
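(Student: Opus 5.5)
We expand the congruence condition with additive characters and pass to the circle method. For $(d,l)=1$,
\[ \1_{m\equiv l \bmod d} = \frac1d\sum_{b\in\aZ{d}} e\big(b(m-l)/d\big), \]
so the inner sum in Theorem \ref{T1p} becomes
\[ \frac1d\sum_{b\in\aZ{d}} e(-bl/d)\int_{\mathbb{T}} S(\alpha+b/d)\,T(\alpha)\,e(-N\alpha)\,d\alpha . \]
Here $S(\gamma)=\sum_m\Lambda(m)w_1(m/X)e(m\gamma)$ and $T(\alpha)=\sum_n\beta(n)w_2(n/X)e(n\alpha)$. In the second statement $T$ is the same, and $S$ is evaluated at $2\alpha+b/d$.

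We split $\mathbb{T}$ into major arcs $\mathfrak{M}$, the union of $|\alpha-a/q|\le X^{-1}Q_2$ with $q\le Q_1$ where $Q_1,Q_2$ are powers of $\ln X$, and the minor arcs $\mathfrak{m}$. To control the supremum over $l$, which may depend on $N$, we do not try to fix $l$. Instead we bound the minor-arc contribution crudely by
\[ \frac1d\sum_b\Big|\int_{\mathfrak{m}} S(\alpha+b/d)T(\alpha)e(-N\alpha)\,d\alpha\Big|, \]
which no longer depends on $l$. We then take the mean square over $N\in(X/2,X]$.

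After Cauchy--Schwarz in $(d,b)$, with a dyadic split $d\sim D$, the quantity $\sum_N\big|\sum_{d\sim D}\frac1d\sum_b|I_{d,b}(N)|\big|^2$ is bounded by
\[ \Big(\sum_{d\sim D}\frac1d\Big)\sum_{d\sim D}\frac1d\sum_b\sum_N|I_{d,b}(N)|^2 . \]
By Bessel's inequality in $N$,
\[ \sum_N|I_{d,b}(N)|^2\le\int_{\mathfrak{m}}|S(\alpha+b/d)|^2|T(\alpha)|^2\,d\alpha . \]
Summing over $d,b$ inside the integral and taking the sup over $\alpha\in\mathfrak{m}$ leaves
\[ \sup_{\alpha\in\mathfrak{m}} D^{-1}\sum_{d\sim D}\sum_b|S(\alpha+b/d)|^2 \times \int|T|^2 , \]
and $\int|T|^2\ll X^{1+o(1)}$. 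For $\alpha\in\mathfrak{m}$ we pick $q$ by Dirichlet with $Q_1<q\le X/Q_1$ and apply Theorem \ref{Lmb}. This gives the saving $D/q+qD/X+D^2X^{-1/3}$ relative to $X^{2+\varepsilon}$, that is, a saving of $D^{-1}$ times these terms.

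This is only a power saving when $D\le X^{1/6-\varepsilon}$, which is exactly where $\theta<1/6$ enters. The difficulty is that $q$ near $(\ln X)^B$ only gives a $\ln$-power saving, while Theorem \ref{Lmb} loses $X^\varepsilon$. We therefore use three ranges:
\begin{itemize}
\item For $q\in[X^{\varepsilon'}D,X^{1-\varepsilon'}/D]$ we use Theorem \ref{Lmb} as above.
\item For $Q_1<q<X^{\varepsilon'}D$, and symmetrically large $q$, we enlarge the major arcs to $q\le X^{2\varepsilon'}$ with width $X^{-1+2\varepsilon'}$. There we use a Siegel--Walfisz-free treatment: the large sieve/Gallagher bound together with the mean value theorem of Choi--Kumchev for Dirichlet polynomials.
\end{itemize}
In that intermediate region we follow Maynard's note: expand $S(a/q+b/d+\lambda)$ into Dirichlet characters modulo $[q,d]$, and show via Choi--Kumchev and zero-density that, outside a small exceptional set of $N$, only characters induced from moduli $\le(\ln X)^{B}$ contribute. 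The Type I/II structure of $\beta$ (products of $\Lambda$ on dyadic ranges $>X^\eta$) permits the same Dirichlet polynomial treatment for $T$ in the second statement. For $k=1$ and the first statement, $T$ is again a prime sum.

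On the genuine major arcs ($q\le(\ln X)^B$, $|\lambda|\le(\ln X)^B/X$) we use Siegel--Walfisz for $S$ and $T$ in progressions. Then $\alpha+b/d$ is near $a/q+b/d=c/[q,d]$, and the principal-character terms give
\[ \sum_{r}\frac{\mu(r)}{\varphi(r)}\cdots \]
Summing Ramanujan-type sums over $b$ and $a$ reconstructs the local factors. For $p\mid d$ the count of admissible residues produces $1/\psi_N(d)$ (with $1-2/p$ when $p\nmid N$, using $(l(N-l),d)=1$), and the remaining primes give $\mathfrak{S}(N)$ truncated at $(\ln X)^B$. The truncation error is acceptable for all but $O(X(\ln X)^{-A})$ values of $N$ by the standard variance bound for $\mathfrak{S}$. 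In the $2m+n$ case the same computation gives $\mathfrak{S}'(N)/(2\psi_N(d))$ for odd $d$.

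The main obstacle is the intermediate range of $q$ (and $[q,d]$ up to $X^{1/6}$). There Theorem \ref{Lmb} is useless, yet Siegel--Walfisz does not reach moduli of size $D$. We would first try Maynard's hybrid large sieve argument, controlling exceptional characters by zero-density with a Choi--Kumchev mean value bound, and discarding at most $X(\ln X)^{-A}$ bad $N$.
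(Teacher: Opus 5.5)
Your minor-arc argument is the paper's Lemma~\ref{Min}. You detach $l$ using additive characters and the triangle inequality, then apply Cauchy--Schwarz, Bessel in $N$ and Theorem~\ref{Lmb}, which forces $q\ge X^{\varepsilon}D$ and $D\le X^{1/6-\varepsilon}$. There are two slips. First, the Cauchy--Schwarz weight is $\sum_{d\sim D}\sum_{b} d^{-2}\asymp 1$, so there is no extra factor $D^{-1}$; with that factor you would be claiming level $1/3$. Second, if you apply Dirichlet with parameter $X/P$, $P=X^{\varepsilon}D$, then minor-arc points automatically have $P<q\le X/P$, so there is no separate large-$q$ range.

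The genuine gap is the whole range $q\le P$, which you leave at ``we would first try''. Your bookkeeping there does not close. Arcs with $q\le X^{2\varepsilon'}$ do not cover $Q_1<q<X^{\varepsilon'}D$. On your ``genuine'' major arcs, Siegel--Walfisz says nothing about $S$ near $a/q+b/d$, whose denominator $[q,d]$ can be as large as $X^{\theta}(\ln X)^B$. You really must take major arcs $q\le P$ of width $P/(qX)$ and control characters of conductor up to $PD\approx X^{2\theta}$, uniformly in $l=l(N)$.

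The missing idea is how to do this uniformly in $l$. The paper does not expand in $b$ on $\mathfrak{M}$. It keeps $S^{(1)}_{l,d}$ and, by Lemma~\ref{sumA}, writes $\sum_a e(-Na/q)S^{(1)}_{l,d}(\epsilon+a/q)S^{(2)}(\epsilon+a/q)$ as a combination of products $S^{(1)}(\epsilon;\xi_1)S^{(2)}_{\ell\ell^*}(\epsilon;\xi_2)$. In this combination $l$ enters only through the coefficient $R(\chi_1,\chi_2,q,d,l,N)$. Lemma~\ref{R} (Maier--Pomerance) gives $|R|\ll q\tau(q)(r_2,N)^{1/2}\ln\ln N$ uniformly in $l$, which is square-root cancellation in both the $b_1$- and $a$-sums. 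For principal characters it gives $R=\mu(q)c_{q/(d,q)}(N)$, which is $l$-free and produces $\mathfrak{S}(N)/\psi_N(d)$ via Lemma~\ref{sing}.

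If instead you expand in $b$ as proposed, extract Gauss sums and bound the $(a,b)$-sums trivially, you lose a factor of about $r_1^{1/2}$. At $\theta$ near $1/6$ the Choi--Kumchev term then grows from $X^{11/20}PD\approx X^{53/60}$ to up to $X^{11/20}(PD)^{3/2}\approx X^{21/20}$, so there is no saving.

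Once $R$ is controlled, no separate exceptional set of $N$ is needed on $\mathfrak{M}$. Gallagher's Lemma~\ref{Gal} turns the $\epsilon$-integrals into Dirichlet-polynomial integrals. The principal part is handled by Heath-Brown's $7/12$ short-interval theorem (Lemma~\ref{D21}). Non-principal characters are handled by Choi--Kumchev (Lemma~\ref{D1}, needing only $P\le X^{9/40}$ and $PD\le X^{9/20}$), and by Lemma~\ref{D22} for conductors $\le(\ln X)^C$. All of this is done in $L^1$ over $N$, and Chebyshev then combines it with the $L^2$ minor-arc bound.
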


\begin{proof}[Proof of Theorem \ref{T1} assuming Theorem \ref{T1p}]
If $k=0$, the claim follows from Theorem \ref{T1p} with $\beta = \Lambda$ and the prime number theorem. 

By the triangle inequality, we have 
\[ R_{d,l}(N) \ll \frac{X}{d} (\ln{X})^2 \]
for all $d,l$ and $N \leq X$. Hence, the Cauchy--Schwarz inequality gives
\begin{align*}
\sum_{d \leq X^{\theta}} \tau(d)^k \sup_{l:(d,l(N-l))=1} |R_{d,l}(N)| &\ll \left( \sum_{d \leq X^{\theta}} \tau(d)^{2k} \frac{X}{d} (\ln{X})^2 \right)^{1/2} \left( \sum_{d \leq X^{\theta}} \sup_{l:(d,l(N-l))=1} |R_{d,l}(N)| \right)^{1/2} \\
&\ll (\ln{X})^{O_k(1)} \left( X \sum_{d \leq X^{\theta}} \sup_{l:(d,l(N-l))=1} |R_{d,l}(N)| \right)^{1/2}. \\
\end{align*}
This reduces the general case to the case $k=0$. 
\end{proof}

\subsection{Arithmetic lemmas}

For a Dirichlet character $\chi \bmod q$ and an integer $m$, define
\[ \tau_m(\chi) = \sum_{a \in \pZ{q}} \chi(a) e(-am/q). \]
Note that $\tau_m(\chi)=\overline{\chi}(-m) \tau(\chi)$ if $(m,q)=1$.  
\begin{lem}\label{tau}
Let $\chi$ be a Dirichlet character modulo $q$ with conductor $r$, and let $m$ be an integer. 
We have
\[ |\tau(\chi)| \leq r^{1/2} \]
and
\[ |\tau_m(\chi)| \leq (q(m,q))^{1/2} \]
\end{lem}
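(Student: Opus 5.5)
The plan is to reduce to primitive characters and then use the classical evaluation of the Gauss sum of a primitive character.

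\emph{First bound.} Let $\chi$ be induced by the primitive character $\chi^*$ modulo $r$, and write $q=rs$. The standard reduction formula is
\[ \tau(\chi)=\mu(s)\chi^*(s)\tau(\chi^*). \]
I would obtain it by writing $a\in\pZ{q}$ via the Chinese remainder theorem where possible, or more directly by inserting $\sum_{e\mid (a,s')}\mu(e)$ to remove the coprimality condition on the part of $s$ coprime to $r$. I would then use the vanishing of $\sum_{a \bmod q}\chi^*(a)e(-a/q)$ whenever the relevant modulus is not compatible with the conductor; this is where primitivity of $\chi^*$ enters. Combined with $|\tau(\chi^*)|=r^{1/2}$, which follows from expanding $|\tau(\chi^*)|^2$ and using that $\tau_m(\chi^*)=\overline{\chi^*}(-m)\tau(\chi^*)$ holds for \emph{all} $m$ when $\chi^*$ is primitive, this gives $|\tau(\chi)|\le r^{1/2}$. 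If citing the textbook is acceptable, this step is simply a reference to Montgomery--Vaughan.

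\emph{Second bound.} Here I would avoid the explicit evaluation and use a Parseval argument instead. For fixed $m$, the quantity $\tau_m(\chi)$ is a Fourier coefficient. The key observation is that $\tau_m(\chi)$ depends only on the orbit of $m$ under multiplication by units: substituting $a\mapsto au$ with $u\in\pZ{q}$ gives $\tau_{mu}(\chi)=\overline{\chi}(u)\tau_m(\chi)$. Hence $|\tau_m(\chi)|^2$ is constant on the set $\{m' \bmod q: (m',q)=g\}$, where $g=(m,q)$; this set is $g\cdot\pZ{q/g}$ and has size $\varphi(q/g)$. By Parseval over additive characters,
\[ \sum_{m' \bmod q}|\tau_{m'}(\chi)|^2 = q\sum_{a\in\pZ{q}}|\chi(a)|^2 = q\varphi(q). \]
Therefore
\[ \varphi(q/g)\,|\tau_m(\chi)|^2\le q\varphi(q), \]
which gives $|\tau_m(\chi)|^2\le q\,\varphi(q)/\varphi(q/g)$. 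Since $\varphi(q)/\varphi(q/g)\le g$ (because $\varphi(gh)\le g\varphi(h)$), we conclude $|\tau_m(\chi)|\le (q(m,q))^{1/2}$.

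\emph{Main obstacle.} The only delicate point is the first bound, where the conductor appears. The Parseval trick only gives $q^{1/2}$ for $m=1$, so it cannot see the conductor, and the primitive-character reduction is genuinely needed there. I would simply cite the standard formula $\tau(\chi)=\mu(q/r)\chi^*(q/r)\tau(\chi^*)$, which yields $|\tau(\chi)|\le r^{1/2}$ immediately; equality holds when $q/r$ is squarefree and coprime to $r$, and otherwise $\tau(\chi)=0$.
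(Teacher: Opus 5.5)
Your proposal is correct, but for the second bound it takes a different route from the paper. The paper gives no argument of its own here: it calls $|\tau(\chi)|\le r^{1/2}$ well known and cites Montgomery--Vaughan and Maynard for $|\tau_m(\chi)|\le (q(m,q))^{1/2}$.

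For the first bound you also end up citing $\tau(\chi)=\mu(q/r)\chi^*(q/r)\tau(\chi^*)$ together with $|\tau(\chi^*)|=r^{1/2}$, so there you match the paper. Your sketched derivation via M\"obius inversion is vague, but the citation carries it.

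For the second bound your argument is genuinely different. It avoids the usual route, which expresses $\tau_m(\chi)$ explicitly through the primitive Gauss sum $\tau(\chi^*)$. Instead you use three facts:
\begin{itemize}
\item $|\tau_m(\chi)|$ is invariant under $m\mapsto mu$ for units $u$.
\item Parseval gives $\sum_{m' \bmod q}|\tau_{m'}(\chi)|^2=q\varphi(q)$.
\item $\varphi(q)\le g\,\varphi(q/g)$ with $g=(m,q)$.
\end{itemize}
The step where you say $|\tau_{m'}(\chi)|$ is constant on $\{m' : (m',q)=g\}$ implicitly uses that this set is a single unit orbit. That holds because reduction $(\mathbb{Z}/q\mathbb{Z})^\times\to(\mathbb{Z}/(q/g)\mathbb{Z})^\times$ is surjective; it is standard but worth saying. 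The degenerate case $m\equiv 0$ is also covered.

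Compared with the explicit-formula approach, yours is shorter and self-contained, and it needs no primitivity or conductor information. The price is that it only gives the stated bound in terms of $q$ and $(m,q)$. It cannot detect the vanishing or the conductor-dependent savings that the explicit evaluation gives, which is exactly why, as you note, it cannot produce the first bound.
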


\begin{proof}
The first estimate is well-known. For the second estimate, see \cite[Lemma 5.4]{montgomery1975exceptional} or \cite[Lemma 4.1]{maynard2024note}. 
\end{proof}

For a Dirichlet character $\chi$, let $\chi^*$ denote the primitive character that induces $\chi$. 
\begin{lem}\label{sumA}
Let $d,q \geq 1$ and $N,l$ be integers such that $(d,l)=(d,N-l)=1$. Let $m,n$ be integers with $(m,[d,q])=1$. We have
\begin{align}
&\sum_{a \in \pZ{q}} \1_{m \equiv l \bmod d} e(a(m+n-N)/q) \notag \\
&= \sum_{\ell \ell^*|q} \frac{\mu(\ell^*)}{\varphi([d,q])\varphi(q/\ell)} \sum_{\chi_1 \bmod [d,q]} \sum_{\chi_2 \bmod q/\ell} \chi^*_2(\ell^*) \tau(\overline{\chi_2}) \chi^*_1(m) \1_{\ell \ell^*|n} \chi^*_2(n/\ell \ell^*) R(\chi_1,\chi_2,q,d,l,N), \label{eR}
\end{align}
where, for characters $\chi_1$ and $\chi_2$ with moduli dividing $[d,q]$ and $q$, respectively, we define $R=R(\chi_1,\chi_2,q,d,l,N)$ by
\[ R = \sum_{a \in \pZ{q}} \left( \sum_{\substack{b_1 \in \pZ{[d,q]} \\ b_1 \equiv l \bmod d}} \overline{\chi_1}(b_1) e(ab_1/q) \right) \chi_2(a) e(-aN/q). \] 
\end{lem}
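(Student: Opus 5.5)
The plan is to derive \eqref{eR} by two successive character expansions. The first detects the congruence $m \equiv l \bmod d$ with characters modulo $[d,q]$. The second expands the additive character $e(an/q)$ into multiplicative characters after the gcd $(n,q)$ has been extracted. The coprimality hypotheses are used only to replace imprimitive characters by primitive ones and to pin down residue classes.

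\emph{Step 1: the $m$-variable.} Since $(m,[d,q])=1$, I write
\[ \1_{m \equiv l \bmod d} = \sum_{\substack{b_1 \in \pZ{[d,q]} \\ b_1 \equiv l \bmod d}} \1_{m \equiv b_1 \bmod [d,q]} = \frac{1}{\varphi([d,q])} \sum_{\chi_1 \bmod [d,q]} \chi_1(m) \sum_{\substack{b_1 \in \pZ{[d,q]} \\ b_1 \equiv l \bmod d}} \overline{\chi_1}(b_1). \]
On the support of $\1_{m\equiv b_1 \bmod [d,q]}$ we have $m \equiv b_1 \bmod q$, so $e(am/q)=e(ab_1/q)$ can be moved inside the $b_1$-sum before orthogonality is applied. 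Also $\chi_1(m)=\chi_1^*(m)$ because $(m,[d,q])=1$. Therefore the left-hand side of \eqref{eR} equals
\[ \frac{1}{\varphi([d,q])}\sum_{\chi_1 \bmod [d,q]}\chi_1^*(m)\sum_{a\in\pZ{q}} F_{\chi_1}(a)\, e(an/q), \]
where
\[ F_{\chi_1}(a)=\Big(\sum_{b_1\equiv l \bmod d}\overline{\chi_1}(b_1)e(ab_1/q)\Big)e(-aN/q). \]

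\emph{Step 2: the $n$-variable.} Set $\ell=(n,q)$ and write $n=\ell n'$ with $(n',q/\ell)=1$. Then $e(an/q)=e(an'/(q/\ell))$, and $an'$ is a unit modulo $q/\ell$. The standard expansion
\[ e(x/r)=\frac{1}{\varphi(r)}\sum_{\chi \bmod r}\tau(\overline{\chi})\chi(x), \qquad (x,r)=1, \]
then gives
\[ e(an/q)=\frac{1}{\varphi(q/\ell)}\sum_{\chi_2 \bmod q/\ell}\tau(\overline{\chi_2})\chi_2(a)\chi_2(n/\ell). \]
Here $\chi_2(a)$ is read as a character modulo $q$ (as in $R$), so summing $F_{\chi_1}(a)\chi_2(a)$ over $a\in\pZ{q}$ produces exactly $R(\chi_1,\chi_2,q,d,l,N)$.

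\emph{Step 3: removing the gcd condition.} The main bookkeeping obstacle is to remove the constraint $\ell=(n,q)$ so that $\ell$ becomes a free divisor variable. I write
\[ \1_{(n,q)=\ell}=\1_{\ell\mid n}\sum_{\ell^*\mid (n/\ell,\,q/\ell)}\mu(\ell^*). \]
On the support of this indicator, $\chi_2(n/\ell)=\chi_2^*(n/\ell)$. I then insert $\chi_2^*(n/\ell)$ in place of $\chi_2(n/\ell)$ for every term $\ell\mid n$. This is legitimate because $\chi_2^*(n/\ell)$ does not depend on $\ell^*$, so the Möbius sum annihilates every term with $(n/\ell,q/\ell)>1$.

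Finally, by complete multiplicativity, $\chi_2^*(n/\ell)=\chi_2^*(\ell^*)\chi_2^*(n/\ell\ell^*)$ whenever $\ell\ell^*\mid n$. Summing over all pairs with $\ell\ell^*\mid q$ then yields precisely the right-hand side of \eqref{eR}. The only care needed is consistency of sign conventions: $\tau(\overline{\chi_2})=\sum_b\overline{\chi_2}(b)e(b/r)$ with $r=q/\ell$, which matches the expansion used in Step 2.
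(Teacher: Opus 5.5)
Your proposal is correct and is essentially the paper's proof. Both use character orthogonality modulo $[d,q]$ for the $m$-variable, a Gauss-sum expansion of $e(an'/(q/\ell))$ after extracting $\ell=(n,q)$, and a Möbius sum over $\ell^*$ to pass to $\chi_2^*$. The paper introduces an auxiliary $b_2$-sum and evaluates it as $\tau(\overline{\chi_2})\chi_2(a)$, which is the same computation, and it leaves implicit what you make explicit: the Möbius factor kills every $\ell\neq(n,q)$, so $\ell$ may be summed freely over $\ell\ell^*\mid q$.
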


\begin{proof}
Let $(n,q)=\ell$ and write $n=\ell n'$. Since $(m,[d,q])=1$ and $(n',q/\ell)=1$, the orthogonality of Dirichlet characters shows that the left-hand side of \eqref{eR} is
\begin{align*}
&\frac{1}{\varphi([d,q])\varphi(q/\ell)} \sum_{\chi_1 \bmod [d,q]} \sum_{\chi_2 \bmod q/\ell} \sum_{a \in \pZ{q}} \\
& \times \sum_{\substack{b_1 \in \pZ{[d,q]} \\ b_1 \equiv l \bmod d}} \sum_{b_2 \in \pZ{(q/\ell)}} \overline{\chi_1}(b_1) \overline{\chi_2}(b_2) e(a(b_1+\ell b_2-N)/q) \chi_1(m) \chi_2(n'). 
\end{align*}
Noting that
\[ \sum_{b_2 \in \pZ{(q/\ell)}} \overline{\chi_2}(b_2) e(ab_2/(q/\ell)) = \tau(\overline{\chi_2}) \chi_2(a) \]
and
\[ \chi_2(n') = \chi^*_2(n') \1_{(n/\ell,q/\ell)=1} = \sum_{\ell^*|q/\ell,n/\ell} \chi^*_2(\ell^*) \mu(\ell^*) \1_{\ell^*|n'} \chi^*_2(n'/\ell^*), \]
we obtain \eqref{eR}. 
\end{proof}

Note that $R(\chi_1,\chi_2,q,d,l,N)=R(\chi'_1,\chi'_2,q,d,l,N)$ if $(\chi_1)^*=(\chi'_1)^*$ and $(\chi_2)^*=(\chi'_2)^*$. 
The next lemma is essentially given in \cite{maier1990unusually}, with a slightly different definition of $R$. 
\begin{lem}\label{R}
Let $1 \leq d,q \leq N$ and $l$ be integers such that $(d,l)=(d,N-l)=1$. Let $\chi_1 \bmod [d,q], \chi_2 \bmod q$ be characters, and let $r_2$ denote the conductor of $\chi_2$. Then
\begin{equation} |R| \ll q \tau(q) (r_2,N)^{1/2} \ln{\ln{N}}, \label{R_bound}, \end{equation}
and, if $\chi_1$ and $\chi_2$ are principal characters, then
\begin{equation} R = \mu(q) c_{q/(d,q)}(N). \label{R_0} \end{equation}
\end{lem}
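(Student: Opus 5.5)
We will study $R$ by first summing over $a$ in the inner sum, using multiplicativity to reduce to prime powers.

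\textbf{Inner sum.} Write $Q=[d,q]$. By the Chinese remainder theorem, the inner sum
\[ S(a)=\sum_{b_1 \in \pZ{Q},\, b_1\equiv l \bmod d} \overline{\chi_1}(b_1)e(ab_1/q) \]
factors over the primes $p \mid Q$. For $p\mid d$ with $p^\alpha\| d$, the residue of $b_1$ modulo $p^\alpha$ is fixed to be $l$. If $p^\beta \| q$ with $\beta>\alpha$, the remaining freedom is $b_1 = l + p^\alpha t$, with $t$ modulo $p^{\beta-\alpha}$. For $p\nmid d$, the local factor is a Gauss sum $\tau_{-a}$ of the local component of $\chi_1$.

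Substituting this factorization into $R=\sum_a S(a)\chi_2(a)e(-aN/q)$, the sum over $a$ is over $a\in\pZ{q}$. This sum also factors over the primes $p\mid q$. The total $R$ is then a product of local factors $R_p$, up to $p\mid d$, $p\nmid q$, which contribute only a unimodular constant $\overline{\chi_1}(l)$-type factor.

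\textbf{Bound \eqref{R_bound}.} We bound each local factor by a case split at each $p^\beta \| q$.

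If $p\nmid d$, then $R_p=\sum_{a}\tau_{-a}(\chi_{1,p})\chi_{2,p}(a)e(-aN/p^\beta)$. When $\chi_{1,p}$ is primitive, or more generally, $\tau_{-a}(\chi_{1,p})=\chi_{1,p}(-a)\tau(\overline{\chi_{1,p}})$ up to the usual vanishing when the conductor is smaller. Then $R_p$ becomes $\tau(\cdot)$ times $\tau_N(\chi_{1,p}\chi_{2,p})$-type sums. Lemma \ref{tau} gives
\[ |R_p|\le p^{\beta/2}\bigl(p^\beta (N,p^\beta)\bigr)^{1/2}. \]
We refine this bound by noting that the $(N,p^\beta)$ factor only appears through the conductor of $\chi_2$. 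This yields $|R_p| \ll p^\beta (r_{2,p},N)^{1/2}$ times a bounded constant.

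If $p\mid d$, the shift by $l$ with $(l,p)=1$ and $(N-l,p)=1$ is used. After summing over $a$, one gets a Ramanujan-type sum in $N-l$, which is bounded by $p^\beta$ times a factor $\le 2$.

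Multiplying the local bounds, the product of the constants over $p\mid q$ is at most $2^{\omega(q)}\le\tau(q)$. A factor $\prod_{p\mid q}(1+O(1/p))\ll \ln\ln N$ absorbs the non-primitive cases.

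\textbf{Formula \eqref{R_0}.} When both characters are principal, we compute directly. For $p\nmid d$, the local inner sum is the Ramanujan sum $c_{p^\beta}(a)$, which equals $\mu(p)$ if $\beta=1$ and vanishes for $\beta\ge2$. This forces $q$ squarefree, and we get $\mu(p)$ times $\sum_{a} e(-aN/p)=c_p(N)$.

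For $p\mid d$ and $p\mid q$, we have $b_1\equiv l$, and a residue count gives the local inner sum $e(al/p)$ if $\beta=1$, and $0$ if $\beta>1$ (when $\alpha<\beta$). Then $\sum_{a\in\pZ{p}}e(a(l-N)/p)=-1=\mu(p)$, since $(N-l,p)=1$.

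Assembling the local factors gives $\mu(q)c_{q/(d,q)}(N)$.

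\textbf{Main obstacle.} The main obstacle is the bookkeeping of the non-primitive and prime-power cases in \eqref{R_bound}, in particular when $p\mid d$ and $p^2\mid q$. I would handle this by following the argument of \cite{maier1990unusually} closely.
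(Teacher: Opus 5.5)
Your computation of \eqref{R_0} is correct, but the local argument you sketch for \eqref{R_bound} breaks down exactly where you flag it.

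Take $p\mid d$ with $p^\alpha\| d$, $p^\beta\| q$ and $\beta>\alpha$. Up to a unimodular factor, the local factor is
\[ R_p=\sum_{t \bmod p^{\beta-\alpha}}\overline{\chi_{1,p}}(l+p^\alpha t)\,\tau_{N-l-p^\alpha t}(\chi_{2,p})=\tau(\chi_{2,p})\sum_{t \bmod p^{\beta-\alpha}}\overline{\chi_{1,p}}(l+p^\alpha t)\,\overline{\chi_{2,p}}(l+p^\alpha t-N). \]

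\begin{itemize}
\item The $a$-sum is a Ramanujan sum only when $\chi_{2,p}$ is principal. It then equals $\mu(p^\beta)=0$, since $\beta>\alpha\ge 1$.
\item Otherwise $R_p\neq 0$ requires $\tau(\chi_{2,p})\neq 0$. Because $\beta\ge 2$, this forces $\chi_{2,p}$ to be primitive modulo $p^\beta$. Hence $|\tau(\chi_{2,p})|=p^{\beta/2}$, $r_{2,p}=p^\beta$, and the local target is $p^\beta(p^\beta,N)^{1/2}$, which is $p^\beta$ when $p\nmid N$.
\item Bounding the $t$-sum trivially gives only $p^{\beta-\alpha+\beta/2}$. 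This is too big as soon as $\beta>2\alpha$.
\end{itemize}

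So your claim of ``a Ramanujan-type sum in $N-l$, bounded by $p^\beta$ times a factor $\le 2$'' is false in this case. The $2^{\omega(q)}\le\tau(q)$ bookkeeping built on it therefore does not stand.

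What is actually needed is to show that this incomplete mixed character sum over the coset $l(1+p^\alpha\mathbb{Z})$ modulo $p^\beta$ is $\ll p^{\beta/2}(p^\beta,N)^{1/2}$, rather than the trivial $p^{\beta-\alpha}$. This is a $p$-adic stationary-phase type estimate, and the degeneracy of the phase when $p\mid N$ is what the factor $(r_2,N)^{1/2}$ pays for. It is precisely the loss recorded in the Remark after the lemma: bounding the sum over $c$ (your $t$) trivially costs the factor $(q,d^\infty)/(q,d)$.

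The paper does not prove \eqref{R_bound} itself; it cites Lemmas 11.1--11.3 of \cite{maier1990unusually}. Deferring to that source therefore leaves you level with the paper, but you should not present the local bounds above as an argument.

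Your ``refinement'' for $p\nmid d$ is true but needs a reason. It follows from two facts:
\begin{itemize}
\item $\tau(\overline{\chi_{1,p}})=0$ unless $\chi_{1,p}$ is primitive, or $\beta=1$ and $\chi_{1,p}$ is principal;
\item $\tau_N(\psi)=0$ for $\psi$ primitive modulo $p^\beta$ with $p\mid N$.
\end{itemize}
Together these give $|R_p|\le p^\beta$ whenever $r_{2,p}<p^\beta$.

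For \eqref{R_0}, your prime-by-prime evaluation is a genuinely different route from the paper's. The paper works globally: it writes $[d,q]=d^+q^-$ with $d^+=([d,q],d^\infty)$, parametrizes $b\equiv l+dc \bmod d^+$, and evaluates the $b'$-sum as $\mu(q^-)$. It then uses $c_q(\cdot)=c_{q^-}(N)\mu((q,d^\infty))$. Your local version makes the vanishing for non-squarefree $q$ more transparent.

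You omitted one subcase, $p\mid d$ with $\alpha\ge\beta\ge 2$. There the inner sum is $e(al/p^\beta)\neq 0$, but the $a$-sum is $c_{p^\beta}(l-N)=\mu(p^\beta)=0$ since $(N-l,p)=1$. So the conclusion is unaffected.
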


\begin{proof}
See \cite[Lemmas 11.1, 11.2, and 11.3]{maier1990unusually} for the proof of \eqref{R_bound}.

A proof of \eqref{R_0} for squarefree $q$ is also given in \cite[Section 8]{maier1990unusually}, but here we also need the non-squarefree case. 

Assume that $\chi_1$ and $\chi_2$ are principal characters. Then
\[ R = \sum_{a \in \pZ{q}} \left( \sum_{\substack{b \in \pZ{[d,q]} \\ b \equiv l \bmod d}} e(ab/q) \right) e(-aN/q). \]
Put $d^+=([d,q],d^{\infty}), q^{-}=[d,q]/d^+=q/(d^{\infty},q)$. Then $d|d^+, q^-|q, [d,q]=d^+ q^-$ and $(d^+,q^-)=1$. Thus
\[ \sum_{\substack{b \in \pZ{[d,q]} \\ b \equiv l \bmod d}} e(ab/q) = \sum_{c=1}^{d^+/d} \sum_{\substack{b \in \pZ{[d,q]} \\ b \equiv l+dc \bmod d^+}} e(ab/q). \]
We write $b= (l+dc) q^- \overline{q^-} + d^+ b'$ where $q^- \overline{q^-} \equiv 1 \bmod d^+$. This gives
\[ \sum_{\substack{b \in \pZ{[d,q]} \\ b \equiv l+dc \bmod d^+}} e(ab/q) = e(a (l+dc) q^- \overline{q^-}/q) \sum_{b' \in \pZ{q^-}} e(ad^{+} b'/q). \]
Since $d^+/q=\frac{d/(d,q)}{q^-}$ and $(d/(d,q),q^-)=1$, we see that
\[
\sum_{b' \in \pZ{q^-}} e(ad^{+} b'/q) = \mu(q^-)
\]
and
\[ R = \mu(q^-)  \sum_{c=1}^{d^+/d} \sum_{a \in \pZ{q}} e(a ((l+dc) q^- \overline{q^-}-N)/q) = \mu(q^-)  \sum_{c=1}^{d^+/d} c_q((l+dc) q^- \overline{q^-}-N). \]
Noting that $((l+dc) q^- \overline{q^-}-N,p)=(l-N,p)=1$ for all $p|(q,d^{\infty})$, we have
\begin{align*}
c_q((l+dc) q^- \overline{q^-}-N) &= c_{q^-}((l+dc) q^- \overline{q^-}-N) c_{(q,d^{\infty})}((l+dc) q^- \overline{q^-}-N), \\
&= c_{q^-}(N) \mu((q,d^{\infty})) = c_{q/(d,q)}(N) \mu((q,d^{\infty})). 
\end{align*}
This gives \eqref{R_0}. 
\end{proof}

\begin{rem}
Let $r_1$ be the conductor of $\chi_1$, and let $r^-_1=(r_1,d^{\infty})$. Following the above argument and bounding the resulting sum over $c$ trivially, we obtain
\[ |R| \leq \frac{(q,d^{\infty})}{(q,d)} (N,q^-) (r^-_1 r_2 [r^-_1,r_2])^{1/2}. \]
However, this estimate can be worse than \eqref{R_bound} when $(q,d^\infty)$ is large.
\end{rem}

\begin{lem}\label{sing}
Let $d,N \geq 1$ with $\psi_N(d) \neq 0$. We have
\[ \sum_{q} \frac{\mu(q)^2 \varphi((d,q))}{\varphi(d) \varphi(q)^2} c_{q/(d,q)}(N) = \frac{\mathfrak{S}(N)}{\psi_N(d)} . \]
\end{lem}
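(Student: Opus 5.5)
The plan is to prove the identity by expanding the left-hand side as an Euler product and comparing it factor by factor with $\mathfrak{S}(N)/\psi_N(d)$.

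First, I will show that the summand is multiplicative in $q$. Define, for squarefree $q$,
\[ f_d(q) = \frac{\mu(q)^2 \varphi((d,q))}{\varphi(q)^2} c_{q/(d,q)}(N). \]
For coprime squarefree $q_1,q_2$ we have $(d,q_1q_2)=(d,q_1)(d,q_2)$ and $q_1q_2/(d,q_1q_2) = \bigl(q_1/(d,q_1)\bigr)\bigl(q_2/(d,q_2)\bigr)$, with coprime factors. Moreover $\varphi$ is multiplicative, and Ramanujan sums are multiplicative in the modulus. Hence $f_d$ is multiplicative.

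Next I will check absolute convergence. For primes $p\nmid N$ we have $|c_p(N)|=1$, while $|c_p(N)|\le p-1$ in all cases. Therefore $\sum_q |f_d(q)| \le \prod_{p \mid dN} O(1)\cdot \prod_{p}\bigl(1+(p-1)^{-2}\bigr) < \infty$. It follows that
\[ \sum_q f_d(q) = \prod_p \bigl(1+f_d(p)\bigr). \]

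Now I compute the local factors.
\begin{itemize}
\item If $p\nmid d$, then $1+f_d(p) = 1 + c_p(N)/\varphi(p)^2$, which is exactly the $p$-th factor of $\mathfrak{S}(N)$.
\item If $p\mid d$, then $(d,p)=p$ and $c_1(N)=1$, so $1+f_d(p) = 1+\varphi(p)/\varphi(p)^2 = p/(p-1)$.
\end{itemize}
Dividing by $\varphi(d)$ and using $\varphi(d)=d\prod_{p\mid d}(1-1/p)$, the left-hand side equals
\[ \frac{1}{d}\prod_{p\mid d}\Bigl(1-\frac1p\Bigr)^{-2}\prod_{p\nmid d}\Bigl(1+\frac{c_p(N)}{\varphi(p)^2}\Bigr). \]

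It remains to insert the missing factors of $\mathfrak{S}(N)$ at primes $p\mid d$.
\begin{itemize}
\item If $p\mid N$, then $c_p(N)=p-1$, so $1+c_p(N)/\varphi(p)^2 = p/(p-1) = (1-1/p)^{-1}$.
\item If $p\nmid N$, then $c_p(N)=-1$, so $1+c_p(N)/\varphi(p)^2 = p(p-2)/(p-1)^2 = (1-2/p)(1-1/p)^{-2}$.
\end{itemize}
Thus for $p\mid d$ we have
\[ \Bigl(1-\frac1p\Bigr)^{-2} = \Bigl(1+\frac{c_p(N)}{\varphi(p)^2}\Bigr)\times\begin{cases}(1-1/p)^{-1}, & p\mid N,\\ (1-2/p)^{-1}, & p\nmid N,\end{cases} \]
provided $p\ne 2$ or $2\mid N$. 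The hypothesis $\psi_N(d)\neq0$ guarantees exactly this: it excludes $2\mid d$ with $2\nmid N$, the only case in which $1-2/p$ vanishes.

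Substituting this and recognising $d\prod_{p\mid d,\,p\mid N}(1-1/p)\prod_{p\mid d,\,p\nmid N}(1-2/p)=\psi_N(d)$, the left-hand side becomes $\mathfrak{S}(N)/\psi_N(d)$. The argument never divides by a factor of $\mathfrak{S}(N)$, so the identity also holds when $\mathfrak{S}(N)=0$; in that case $N$ is odd and $2\nmid d$, and the $p=2$ factor vanishes on both sides.

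There is no real obstacle here. The only points needing care are the multiplicativity of $q\mapsto c_{q/(d,q)}(N)$, which holds because $q$ is squarefree, and the bookkeeping at $p=2$ just described.
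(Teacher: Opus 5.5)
Your proof is correct and follows the same route as the paper: you use multiplicativity in $q$, take the Euler product with local factors $p/(p-1)$ for $p\mid d$ and $1+c_p(N)/\varphi(p)^2$ for $p\nmid d$, and compare prime by prime with $\psi_N(d)$. You are slightly more careful than the paper, since you justify absolute convergence and multiply by the local factors of $\mathfrak{S}(N)$ rather than dividing by them, which makes explicit where the hypothesis $\psi_N(d)\neq 0$ is used at $p=2$.
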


\begin{proof}
Since the summand is multiplicative with respect to $q$, the left-hand side equals 
\[ = \frac{1}{\varphi(d)} \prod_{p|d} \left( 1 + \frac{1}{\varphi(p)} \right) \prod_{p \nmid d} \left( 1 + \frac{c_{p}(N)}{\varphi(p)^2} \right). \]
Moreover,
\[ \frac{1}{\varphi(d)} \prod_{p|d} \left( 1 + \frac{1}{\varphi(p)} \right)\left( 1 + \frac{c_{p}(N)}{\varphi(p)^2} \right)^{-1} = \frac{1}{d} \prod_{\substack{p|d \\ p|N}} \frac{p}{p-1} \prod_{\substack{p|d \\ p\nmid N}} \frac{p}{p-2} = \frac{1}{\psi_N(d)}. \]
This completes the proof.  
\end{proof}

\subsection{Analytic Lemmas}

The following classical result follows from Gallagher's lemma. See also \cite[Proposition 5.1]{matomaki2019correlations}. 
\begin{lem}\label{Gal}
Let $\varepsilon>0, x \geq 1$, and $\eta>0, \delta \geq 1/(\eta x)$. 
Let $f$ be an arithmetic function supported on $[1,x/\eta]$ and satisfying $f(n) \ll x$, and let $w$ be a smooth function supported on $[\eta,1]$. Then
\begin{equation}
\int_{-\delta}^{\delta} \left| \sum_{n} f(n) e(n\epsilon) w(n/x) \right|^2 d\epsilon \ll \delta^2 x \sup_{\substack{I \subset [\eta x,x] \\ |I| \leq 2\delta^{-1}}} \left| \sum_{n \in I} f(n) \right|^2 \label{Gal1}
\end{equation}
and
\begin{equation}
\int_{-\delta}^{\delta} \left| \sum_{n} f(n) e(n\epsilon) w(n/x) \right|^2 d\epsilon \ll x^{-1} \left( \int_{|t| \leq \delta x^{1+\varepsilon}} \left| \sum_{n} \frac{f(n)}{n^{it}} \right| dt \right)^2 + x^{-10}. \label{Gal2}
\end{equation}
Here, the implicit constant depends only on $\varepsilon,\eta$ and $w$. 
\end{lem}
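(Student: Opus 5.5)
We prove Lemma \ref{Gal} by writing $S(\epsilon)=\sum_n f(n)w(n/x)e(n\epsilon)$. For \eqref{Gal1} we use Gallagher's lemma, and for \eqref{Gal2} we use Mellin inversion combined with Plancherel. Throughout, $f$ is effectively restricted to $[\eta x,x]$ by the support of $w$.

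\emph{Proof of \eqref{Gal1}.} Gallagher's lemma, in the form $\int_{-\delta}^{\delta}|\sum_n c_n e(n\epsilon)|^2 d\epsilon \ll \delta^2 \int_{\mathbb{R}} |\sum_{y<n\le y+1/(2\delta)} c_n|^2 dy$, applies with $c_n=f(n)w(n/x)$. The integrand vanishes unless the window $(y,y+1/(2\delta)]$ meets $[\eta x,x]$. Since $1/\delta \le \eta x$, the relevant $y$ range has length $\ll x$, so the integral is $\ll x\sup_y|\sum_{y<n\le y+1/(2\delta)} f(n)w(n/x)|^2$. It remains to remove the weight $w$. On a window of length $\le 1/\delta$, we write $w(n/x)=w(y/x)+\int_{y}^{n} x^{-1}w'(u/x)\,du$ and use partial summation. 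This bounds the weighted window sum by $\ll \sup_{I}|\sum_{n\in I}f(n)|$ over subintervals $I$ of the window, with an extra factor $1+(\delta x)^{-1}\ll 1$. Hence we get $\ll \delta^2 x\sup_{I}|\sum_{n\in I}f(n)|^2$ with $|I|\le 2\delta^{-1}$ and $I\subset[\eta x,x]$, after intersecting $I$ with the support. This is \eqref{Gal1}.

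\emph{Proof of \eqref{Gal2}.} We start from the same Gallagher bound, $\delta^2\int|\sum_{y<n\le y+h}f(n)w(n/x)|^2dy$ with $h=1/(2\delta)$. To make the Mellin transform smooth, we first replace the sharp window by a smooth one. Concretely, we majorize with Gallagher's smoothed variant: $\int_{-\delta}^{\delta}|S|^2 \le C\int_{\mathbb{R}}|S(\epsilon)|^2 |\hat\phi(\epsilon/\delta)|^2 d\epsilon$ with $\phi\ge0$ smooth and $|\hat\phi|\ge1$ on $[-1,1]$. By Plancherel this equals $\delta^{-2}\int_{\mathbb{R}}|\sum_n f(n)w(n/x)\delta\,\phi(\delta(y-n))|^2dy$. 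Next we write $F(n):=w(n/x)\delta\phi(\delta(y-n))$, a smooth function of $n$ localized to $|n-y|\ll 1/\delta$ with $n\asymp x$. Mellin inversion gives $F(n)=\frac{1}{2\pi}\int \tilde F(it)n^{-it}dt$. Repeated integration by parts (each derivative costs $\delta$, gains $x/|t|$ via $n\asymp x$) gives $|\tilde F(it)|\ll \delta\cdot\delta^{-1}\cdot x^{-1}\cdot(\dots)$, hence $\tilde F(it)\ll_j \frac{1}{x}(1+|t|\delta/x)^{-j}$. Therefore we may truncate to $|t|\le \delta x^{1+\varepsilon}$ at cost $x^{-10}$, using $f\ll x$. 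This yields $|\sum_n f(n)F(n)|\ll x^{-1}\int_{|t|\le\delta x^{1+\varepsilon}}|\sum_n f(n)n^{-it}|dt+x^{-20}$ uniformly in $y$. The dependence on $y$ only enters through $\tilde F$, and $\tilde F$ is negligible unless $y\asymp x$. Squaring, and integrating over a $y$-range of length $\ll x$, gives a total of $\ll\delta^{-2}\cdot\delta^2\cdot x\cdot x^{-2}(\int|\cdot|dt)^2$, i.e.\ $x^{-1}(\int_{|t|\le\delta x^{1+\varepsilon}}|\sum f(n)n^{-it}|dt)^2+x^{-10}$.

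The main obstacle is the bookkeeping of the Mellin transform bound for $F$. In particular, the factor $\delta$ from $\phi$ must exactly cancel against the $\delta^{-2}$ from Plancherel and the $\delta^2$ from Gallagher, leaving $x^{-1}$. We also need the decay scale to be $|t|\gg x\delta^{-1}$ rather than $x\delta$; the truncation is then harmless because $\delta x^{1+\varepsilon}$ is at least the correct scale $\delta x\cdot x^{\varepsilon}$ once $\delta\ge 1/(\eta x)$. We would verify this by checking that $\frac{d^j}{dn^j}F\ll \max(x^{-1},\delta)^j\cdot\delta$, which gives decay beyond $|t|\asymp x\max(x^{-1},\delta)=\max(1,\delta x)\asymp\delta x$.
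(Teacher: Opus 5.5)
Your argument is essentially the paper's. It uses the smoothed Gallagher majorant $\int_{-\delta}^{\delta}|S|^2\le\int_{\mathbb{R}}|S(\epsilon)|^2|\hat\phi(\epsilon/\delta)|^2\,d\epsilon$, then Plancherel, Mellin inversion of the smooth window, and integration by parts to make the Mellin transform negligible beyond $|t|\approx\delta x^{1+\varepsilon}$. For \eqref{Gal1} it adds partial summation. Your use of the sharp-window Gallagher lemma there is an equivalent variant; just restrict the window to $[\eta x,x]$ before the Abel summation, since $f$ need not vanish outside that interval.

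However, the bookkeeping you single out as the main obstacle contains two errors that happen to cancel. With your $F(n)=w(n/x)\,\delta\phi(\delta(y-n))$ and $\phi$ real, Plancherel gives exactly
\[ \int_{\mathbb{R}}|S(\epsilon)|^2|\hat\phi(\epsilon/\delta)|^2\,d\epsilon=\int_{\mathbb{R}}\Bigl|\sum_n f(n)F(n)\Bigr|^2\,dy, \]
with no factor $\delta^{-2}$. This is the paper's $\delta^2\int|\sum_n f(n)w(n/x)\phi(\delta(y-n))|^2dy$. Moreover, the smoothed majorant replaces the sharp Gallagher bound rather than being applied on top of it, so there is no additional factor $\delta^2$ ``from Gallagher''. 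The correct count is just $\tilde F(it)\ll x^{-1}$, squared, times a $y$-range of length $\ll x$, which gives $x^{-1}$. Taken literally, your Plancherel line alone would lose a factor $\delta^{-2}$.

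Second, the displayed bound $\tilde F(it)\ll_j x^{-1}(1+|t|\delta/x)^{-j}$ and the remark that the decay scale must be $x\delta^{-1}$ are backwards. With that decay you would have to keep $|t|$ up to about $x^{1+\varepsilon}/\delta$, which exceeds $\delta x^{1+\varepsilon}$ whenever $\delta<1$, and \eqref{Gal2} would not follow. Your final derivative check $\frac{d^j}{dn^j}F\ll\delta\max(x^{-1},\delta)^j$ is the right one. Together with $\delta\ge 1/(\eta x)$ it gives $\tilde F(it)\ll_j x^{-1}(1+|t|/(\delta x))^{-j}$, matching the paper's $W(it)\ll(x\delta/(1+|t|))^j$. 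Truncating at $|t|=\delta x^{1+\varepsilon}$ then costs $O(x^{-10})$ using $f\ll x$. With these corrections your proof is complete and coincides with the paper's.
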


\begin{proof}

Let $\phi:\mathbb{R} \to \mathbb{R}$ be a smooth even function supported on $[-1,1]$ and satisfying $\hat{\phi}(u) = \int_{-1}^{1} \phi(y) \cos(2\pi uy) dy \geq 1$ for $u \in [-1,1]$. Then 
\begin{equation}
\int_{-\delta}^{\delta} \left|\sum_{n} f(n) e(n\epsilon) w(n/x)\right|^2 d\epsilon \leq \int_{-\infty}^{\infty} \left|\sum_{n} f(n) e(n\epsilon) w(n/x)\right|^2 |\hat{\phi}(\epsilon/\delta)|^2 d\epsilon. \label{Gal_1}
\end{equation}
Since
\begin{align*}
\sum_{n} f(n) e(-n\epsilon) w(n/x) \hat{\phi}(\epsilon/\delta) &= \sum_{n} f(n) e(-n\epsilon) w(n/x) \int_{-\infty}^{\infty} \phi(y) e(-\epsilon y/\delta) dy \\
&= \delta \int_{-\infty}^{\infty} \left( \sum_{n} f(n) w(n/x) \phi(\delta(z-n)) \right) e(-\epsilon z) dz,
\end{align*}
it follows from the Plancherel identity that the right-hand side of \eqref{Gal_1} is
\[ \delta^2 \int_{-\infty}^{\infty} \left| \sum_{n} f(n) w(n/x) \phi(\delta(y-n)) \right|^2 dy. \]
We bound this trivially by
\[ \ll \delta^2 x \sup_{y \in \mathbb{R}} \left| \sum_{n} f(n) w(n/x) \phi(\delta(y-n)) \right|^2. \]
This, together with partial summation, gives \eqref{Gal1}.

Fix $y \in \mathbb{R}$ and let
\[ W(s) = \int_{0}^{\infty} w(z/x) \phi(\delta(y-z)) z^{s-1} dz. \]
Trivially, 
\begin{equation} W(it) \ll \int_{0}^{\infty} |\phi(\delta(y-z))| x^{-1} dz \ll (\delta x)^{-1} \label{W_t} \end{equation}
for all $t \in \mathbb{R}$. 
Integrating by parts $j\geq 0$ times, we obtain 
\begin{align*}
W(it) &= (-1)^j \int_{\mathbb{R}} \frac{z^{it+j}}{(it+1)(it+2)\cdots (it+j)} \left( \frac{d^j}{dz^j} w(z/x) \phi(\delta(y-z)) z^{-1} \right) dz, \\
&\ll_j \int_{\eta x}^{x} \frac{x^j}{(1+|t|)^j} (1/x+\delta)^j x^{-1} dz \ll \left( \frac{x\delta}{1+|t|} \right)^j,
\end{align*}
for any $j\geq 0$ and $t \in \mathbb{R}$. 
This gives
\begin{equation} W(it) \ll \frac{x^{-100}}{|t|^2} \label{W_100} \end{equation}
for $|t| \geq \delta x^{1+\varepsilon}$. 

By the Mellin inversion formula, we have
\[ w(n/x) \phi(\delta(y-n)) = \frac{1}{2\pi} \int_{-\infty}^{\infty} W(it) n^{-it} dt \]
and thus
\[ \sum_{n} f(n) w(n/x) \phi(\delta(y-n)) = \frac{1}{2\pi} \int_{-\infty}^{\infty} W(it) \left( \sum_{n} \frac{f(n)}{n^{it}} \right) dt. \]
Using \eqref{W_100} and \eqref{W_t}, we obtain \eqref{Gal2}.

\end{proof}

\begin{lem}\label{D0}
Let $\beta$ be as in Theorem \ref{T1p}, and let $1 < h \leq X^{1/2}$. Then
\begin{equation} \label{D0eqn} \beta(n) \leq (\ln{n})^k \end{equation}
and
\begin{equation} \label{D0eq} \sum_{n} \beta(hn) \ll X^{1-\eta} (\ln{X})^{O(1)} \end{equation}
\end{lem}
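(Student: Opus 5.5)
The first bound \eqref{D0eqn} is a direct estimate on the number of representations. For $k=1$ we have $\beta(n)=\Lambda(n)\1_{1<n\leq X}\leq \ln n$. For $k>1$, each summand is at most $\prod_i \Lambda(n_i)\leq \prod_i \ln n_i$, and we must also control how many factorizations occur. I would write $\beta(n)=\sum_{n_1\cdots n_k=n}\prod_i\Lambda(n_i)$ with the range constraints dropped, which is an upper bound since $\Lambda\geq 0$. This unrestricted sum equals the coefficient of $n^{-s}$ in $(-\zeta'/\zeta)^k$. It is convenient to use the identity $\sum_{n_1\cdots n_k=n}\Lambda(n_1)\cdots\Lambda(n_k)\leq \sum_{n_1\cdots n_k=n}\ln n_1\cdots\ln n_k$ only heuristically. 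A cleaner route is induction on $k$: from $\sum_{d\mid n}\Lambda(d)=\ln n$ we get
\[ \sum_{n_1\cdots n_k=n}\Lambda(n_1)\cdots\Lambda(n_k)=\sum_{n_1\mid n}\Lambda(n_1)\,\beta_{k-1}(n/n_1)\leq (\ln n)^{k-1}\sum_{n_1\mid n}\Lambda(n_1)=(\ln n)^k, \]
using the inductive hypothesis $\beta_{k-1}(m)\leq(\ln m)^{k-1}\leq(\ln n)^{k-1}$. This gives \eqref{D0eqn}.

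For \eqref{D0eq} the key point is that $\beta$ is supported on integers $n\leq X$, and, for $k>1$, on integers all of whose "factors" $n_i$ exceed $X^{\eta}$. The sum $\sum_n\beta(hn)$ runs over $m=hn\leq X$ with $h\mid m$. I would bound
\[ \sum_n\beta(hn)\leq \sum_{\substack{n_1\cdots n_k\equiv 0 \bmod h\\ N_i<n_i\leq N_i'}}\Lambda(n_1)\cdots\Lambda(n_k). \]
Each $n_i$ contributing is a prime power $p_i^{a_i}$. Since $h>1$, some prime $p\mid h$ must divide some $n_i$, so $n_i$ is a power of $p$. I would therefore sum over the index $i$ and the prime $p\mid h$ (at most $O(\ln X)$ choices) with $n_i=p^{a}$. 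The remaining product over $j\neq i$ is bounded trivially by $\ll (\ln X)^{O(1)}\prod_{j\neq i}N_j$ via Chebyshev. The sum over $n_i=p^a\in(N_i,2N_i]$ contributes only $O((\ln X)^2)$. The total is thus $\ll(\ln X)^{O(1)}\max_i\prod_{j\ne i}N_j\leq (\ln X)^{O(1)}X/N_i\leq X^{1-\eta}(\ln X)^{O(1)}$, using $N_i>X^\eta$.

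For $k=1$ the factor-size lower bound is unavailable, so I would argue directly. $\beta(hn)=\Lambda(hn)$ is nonzero only when $hn$ is a power of a prime $p$, and $h$ must then itself be a power of $p$, which fixes $p$. The sum is therefore $\ll \ln X\cdot\#\{a:p^a\leq X\}\ll(\ln X)^2$. This is far below $X^{1-\eta}$ for $\eta$ small. The hypothesis $h\leq X^{1/2}$ is not even needed here. The only mildly delicate step is the bookkeeping in the $k>1$ case, namely ensuring that the divisibility by $h$ genuinely forces one factor to be a prime power of a fixed prime and hence collapses a factor of size $N_i$. I expect no serious obstacle.
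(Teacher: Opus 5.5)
Your proof is correct, and its core matches the paper's. Because $h>1$, the condition $h\mid n_1\cdots n_k$ forces some prime of $h$ into some prime-power factor $n_i$. That factor's sum over $(N_i,N_i']$ then collapses to $O(\ln X)$, while the remaining factors contribute $\ll\prod_{j\neq i}N_j\le X/N_i<X^{1-\eta}$.

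The difference is in the bookkeeping. The paper uses the exact M\"obius identity \eqref{hrem} to write $\sum_n\beta(hn)$ as a combination of products $\prod_i\sum\Lambda(q_in_i)$ with $h\mid q_1\cdots q_k$. Before doing so, it reduces to $h$ having at most $k$ prime-power factors, so that there are $(\ln X)^{O_k(1)}$ terms. You instead use $\Lambda\ge0$ and take a union bound over pairs $(i,p)$ with $p\mid h$.

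Your version is shorter and needs no such reduction. The paper's version has the advantage that the same exact identity is reused in Lemma \ref{D1}. There the sums $\sum_n\beta(hn)\chi(n)n^{it}$ carry oscillating weights, so positivity is unavailable and one genuinely needs a decomposition into products of Dirichlet polynomials.

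For \eqref{D0eqn} your induction is fine. The paper's route is a one-liner: it relaxes $n_1\cdots n_k=n$ to $n_i\mid n$ for each $i$, giving $\beta(n)\le(\sum_{d\mid n}\Lambda(d))^k=(\ln n)^k$.

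There are two harmless slips:
\begin{itemize}
\item For $k>1$, $\beta$ is supported on $m\le N_1'\cdots N_k'\le 2^kX$ rather than $m\le X$; you never use the latter.
\item The interval $(N_i,2N_i]$ contains at most one power of a fixed prime, so the collapsed factor is $O(\ln X)$, not $O((\ln X)^2)$.
\end{itemize}
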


\begin{proof}

If $k=1$, both \eqref{D0eqn} and \eqref{D0eq} are trivial.

Assume that $k \geq 2$. Recall that
\[ \beta(n) = \sum_{\substack{n_1 \cdots n_k = n \\ N_i < n_i \leq N'_i}} \Lambda(n_1) \cdots \Lambda(n_k). \]
Since $n_i|n$, \eqref{D0eqn} follows from
\[ \beta(n) \leq \left( \sum_{d|n} \Lambda(d) \right)^k = (\ln{n})^k. \]

It remains to prove \eqref{D0eq}. 

Let $f:\mathbb{N}^k \to \mathbb{C}$. We first note the identity
\begin{equation} \label{hrem} \sum_{n_1 \cdots n_k = hn} f(n_1,\ldots,n_k) = \sum_{\substack{g_i r_i|h \\ h|g_1 \cdots g_k}} \mu(r_1) \cdots \mu(r_k) \sum_{\frac{g_1 \cdots g_k}{h} r_1 \cdots r_k n_1 \cdots n_k = n} f(g_1r_1n_1,\ldots,g_kr_kn_k). \end{equation} 
We write $(n_i,h)=g_i$. Then we see that $h|g_1 \cdots g_k$ and $(n_i/g_i,h/g_i)=1$. We use the Möbius inversion formula 
\[ \1_{(n_i/g_i,h/g_i)=1} = \sum_{r_i|n_i/g_i,h/g_i} \mu(r_i), \]
and write $n_i=r_i g_i n'_i$. This gives \eqref{hrem}. 

Let $h \leq X^{1/2}$. We may assume that $h$ is a product of at most $k$ prime powers: otherwise $\beta(h \cdot)=0$. By \eqref{hrem}, the left-hand side of \eqref{D0eq} can be written as a linear combination of $(\ln{X})^{O_k(1)}$ sums of the form
\[ \prod_{i=1}^{k} \sum_{N_i/q_i< n_i \leq N'_i/q_i} \Lambda(q_i n_i), \]
where $h|q_1 \cdots q_k$. 
The claim then follows from the bounds
\[ \sum_{N_i/q_i< n_i \leq N'_i/q_i} \Lambda(q_i n_i) \ll
\begin{cases}
N_i & q_i=1, \\
\ln{X} & q_i>1,
\end{cases}
\]
together with $N_i \geq X^{\eta}$. 
\end{proof}

\begin{lem}\label{D1}
Let $\beta$ be as in Theorem \ref{T1p}. Then
\begin{equation} \label{D1eq} \sum_{\substack{r \leq R \\ d|r}} \sideset{}{^*}\sum_{\chi \bmod r} \int_{-T}^{T} \left| \sum_{n} \beta(hn) \chi(n) n^{it} \right| dt \ll \left(\frac{X}{h} + \frac{R^2T}{d}\left( \frac{X}{h} \right)^{11/20}\right) (\ln{X})^{O(1)} \end{equation}
for all $d,R \leq X$, $h \leq X^{1/2}$, and $T \geq 1$. 
\end{lem}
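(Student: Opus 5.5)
The bound will follow from a mean value theorem for Dirichlet polynomials twisted by characters, namely the large-sieve type estimate of Choi and Kumchev \cite{choi2006mean}. The two terms on the right of \eqref{D1eq} correspond to the diagonal and off-diagonal contributions in such an estimate. We first reduce $\beta(h\cdot)$ to products of shorter Dirichlet polynomials. If $k=1$, then $\beta(hn)=\Lambda(hn)$ with $h>1$ is supported on prime powers $p^j$ with $p\mid h$. In that case the inner sum has $O((\ln X)^2)$ terms each of size $O(\ln X)$, and the claim is trivial as soon as $R^2T/d\geq 1$. The term $X/h$ absorbs everything else, since the trivial bound is $\ll X/h$ only for $h=1$; for $h=1$ and $k=1$ we are in the genuine $\Lambda$ case.

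For $k\geq 2$ we apply the identity \eqref{hrem} from the proof of Lemma \ref{D0}. This writes $\sum_n\beta(hn)\chi(n)n^{it}$ as a combination of $(\ln X)^{O(1)}$ products $\prod_i F_i(\chi,t)$, where
\[ F_i(\chi,t)=\sum_{N_i/q_i<n_i\leq N_i'/q_i}\Lambda(q_in_i)\chi(n_i)n_i^{it}, \]
up to harmless unimodular factors $\chi(\cdot)(\cdot)^{it}$ coming from the fixed integers $g_i,r_i$. Factors with $q_i>1$ are bounded trivially by $O(\ln X)$, as in Lemma \ref{D0}. The remaining factors are genuine prime polynomials $\sum_{n\sim N_i}\Lambda(n)\chi(n)n^{it}$ whose lengths multiply to roughly $X/h'$ with $h'\geq h$ (up to logarithms). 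So it suffices to bound
\[ \sum_{\substack{r\leq R\\ d\mid r}}\sideset{}{^*}\sum_{\chi\bmod r}\int_{-T}^{T}|P(\chi,t)|\,dt, \]
where $P$ is a product of prime-supported polynomials of total length $Y\asymp X/h$ and each factor has length $\geq X^{\eta}$ (or $P$ is $\Lambda$ itself when $k=1$, $h=1$).

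The core step is the Choi--Kumchev type mean value estimate. We expect it in the form
\[ \sum_{r\leq R}\sideset{}{^*}\sum_{\chi}\int_{-T}^{T}\Big|\sum_{n\sim Y}\Lambda(n)\chi(n)n^{it}\Big|dt\ll \big(Y+R^2TY^{11/20}\big)(\ln X)^{O(1)}, \]
together with its analogue for products of prime polynomials. This is proved by Heath-Brown's identity, Cauchy--Schwarz and the hybrid large sieve, plus Huxley-type large value estimates; the exponent $11/20$ reflects the $12/5$ density hypothesis range. To gain the factor $1/d$ from the condition $d\mid r$, we would follow the standard refinement of the large sieve: the moduli are restricted to multiples of $d$, so there are only $R/d$ of them. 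The large sieve term $R^2T$ then becomes $(R^2/d)T$, because $\sum_{r\leq R,\,d\mid r}\varphi(r)\ll R^2/d$. We would check that every place in the Choi--Kumchev argument where $R^2T$ arises comes from counting $\sum_r\varphi(r)T$, and so improves in the same way.

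The main obstacle is this last point. We must verify that the Choi--Kumchev bound, which is stated for all $r\leq R$, transfers to the sparse family $d\mid r$ with the saving $1/d$ on the off-diagonal term but none on the diagonal term $Y$. We must also handle $\beta$ as a product rather than $\Lambda$ itself. For the product, we would try writing $P$ as $\Lambda$ on the longest factor times a short coefficient sequence. The Heath-Brown decomposition accommodates such extra smooth-support factors, since each $N_i\geq X^\eta$ keeps all pieces in admissible Type I/II ranges.
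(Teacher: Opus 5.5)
\textbf{Verdict: there is a genuine gap in the product case $k\ge 2$.}

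Your handling of $k=1$ agrees with the paper: it is trivial for $h>1$ and follows from Choi--Kumchev for $h=1$. Your reduction for $k\ge 2$ via \eqref{hrem}, with the factors having $q_i>1$ bounded trivially, also agrees with the paper. The factor $1/d$ needs no re-derivation, since \cite[Theorem 1.1]{choi2006mean} is already stated for moduli $r\le R$ with $d\mid r$. If you did re-derive it, the correct reason the hybrid large sieve improves to $R^2T/d$ is that the fractions $a/q$ with $d\mid q\le R$ are $d/R^2$-spaced, because $[q,q']\le qq'/d$. The count $\sum_{d\mid r\le R}\varphi(r)\ll R^2/d$ alone does not prove this.

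\textbf{Why your plan for $k\ge 2$ fails.} You propose applying Heath-Brown's identity to the longest prime factor only, and treating the product $G$ of the remaining prime polynomials as a ``short coefficient sequence''. This does not work, for two reasons.
\begin{enumerate}
\item $G$ need not be short. For $k=3$, $h=1$ and $N_1\approx N_2\approx N_3\approx X^{1/3}$, it has length $L\approx Y^{2/3}$.
\item Once $G$ is a generic coefficient sequence, the Choi--Kumchev case analysis is unavailable. That analysis needs every long factor to be smooth ($1$ or $\log$), and every non-smooth factor to be a short M\"obius polynomial that can be regrouped into a Type~II range.
\end{enumerate}
To see the failure concretely, put $Q=R^2T/d$. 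In the example above, take the Heath-Brown piece in which the longest factor becomes a smooth sum $Z$ of length $M\approx Y^{1/3}$. For an unstructured factor $G$, Cauchy--Schwarz with the hybrid large sieve is essentially all that is available. For $M\le Q\le L$ it gives only $\|G\|_2\|Z\|_2\approx L(QM)^{1/2}=Q^{1/2}Y^{5/6}$. This exceeds both $Y$ and $QY^{11/20}$ by a power of $Y$ throughout $Y^{1/3}<Q<Y^{17/30}$. The lower bound $N_i\ge X^{\eta}$ does not help: it places no factor in a Type~II range and gives $G$ no smooth structure.

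\textbf{The missing idea.} Apply Heath-Brown's identity to every factor $\Lambda(n_i)$, not just the longest one.
\begin{itemize}
\item The whole product then consists of short M\"obius polynomials (length $\le X^{1/100}$) and smooth polynomials, with total length $\ll X/h$.
\item Use Perron's formula to remove the coupling constraints $L_i<n_{i,1}\cdots n_{i,200}\le L_i'$. This produces the shifts $t+t_j$ and the extra integral over $t_{2\ell}$ with weight $(1+|t_{2\ell}|)^{-1}$.
\item Apply \cite[Theorem 2.1]{choi2006mean}, which covers general products of this shape, together with the argument of \cite[Theorem 1.1]{choi2006mean}.
\end{itemize}
This is exactly the paper's route.
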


\begin{proof}
If $k=1$, this follows from \cite[Theorem 1.1]{choi2006mean}. 

Assume that $k \geq 2$. We claim that there exist $\ell \ll k$ and $(M_j)_{1 \leq j \leq 2\ell}, (t_j)_{1 \leq j < 2\ell}$ such that
\[ M_1 \cdots M_{2\ell} \ll X/h, \quad M_1 \leq \cdots \leq M_{\ell} \leq X^{1/100}, \quad |t_1|,\ldots,|t_{2\ell-1}| \leq X^{100} \]
and the left-hand side of \eqref{D1eq} is bounded by
\begin{equation} \label{D1F} (\ln{X})^{O_k(1)} \sum_{\substack{r \leq R \\ d|r}} \sideset{}{^*}\sum_{\chi \bmod r} \int_{-T}^{T} |F_1(t) \cdots F_{2\ell}(t)| dt + X^{-10}, \end{equation}
where
\[ F_j(t) = \sum_{n_j \sim M_j} \mu(n_j) \chi(n_j) n_j^{i(t+t_j)} \]
for $1 \leq j \leq \ell$, 
\[ F_j(t) = \sum_{n_j \sim M_j} f(n_j) \chi(n_j) n_j^{i(t+t_j)}, \quad f \in \{1,\ln{\cdot}\} \]
for $\ell+1 \leq j \leq 2\ell-1$ and
\[ F_j(t) = \int_{|t_{2\ell}| \leq X^{100}} \left| \sum_{n_j \sim M_j} f(n_{j}) \chi(n_j) n_j^{i(t+t_{2\ell})} \right| \frac{dt_{2\ell}}{1+|t_{2\ell}|}, \quad f \in \{1,\ln{\cdot}\}, \]
for $j=2\ell$.

We use \eqref{hrem} and estimate the terms with \(q_i>1\) trivially. Thus, we see that $\left| \sum_{n} \beta(hn) \chi(n) n^{it} \right|$ is bounded by a linear combination of $(\ln{X})^{O_k(1)}$ sums of the form
\[ \prod_{i=1}^{I} \left| \sum_{\substack{n_i \\ L_i < n_i \leq L'_i}} \Lambda(n_i) \chi(n_i) n^{it}_i \right| \]
where $I \leq k, \{(L_i,L'_i)\} \subset \{(N_i,N'_i)\}, L_1 \cdots L_I \ll X/h$.

For each $i$, we apply Heath-Brown's identity to $\Lambda(n_i)$ and Perron's formula. 
Noting that $\left| ({L'_i}^{s}-{L_i}^{s})/s \right| \ll 1/(1+|s|)$, we see that $\left| \sum_{L_i < n_i \leq L'_i} \Lambda(n_i) \chi(n_i) n^{it}_i \right|$ is bounded by $(\ln{X})^{O_k(1)}$ integrals of the form 
\[ \int_{-X^{100}}^{X^{100}} \prod_{j=1}^{200} \left| \sum_{n_{i,j} \sim L_{i,j}} f_j(n_{i,j}) \chi(n_{i,j}) n_{i,j}^{i(t'+t)} \right| \frac{1}{1+|t'|} dt' + X^{-50} \]
where $L_{i,1} \cdots L_{i,200} \ll L_i$, $L_{i,j} \leq X^{1/100}$ for $1 \leq j \leq 100$, and 
\[ f_j(n) =
\begin{cases}
\mu(n) & 1 \leq j \leq 100, \\
\ln{n} & j=101, \\
1 & 101 < j \leq 200.
\end{cases}
\]
Inserting these decompositions and interchanging the order of integration, we obtain the claim.

It remains to estimate \eqref{D1F}. This follows from \cite[Theorem 2.1]{choi2006mean} and the argument in the proof of \cite[Theorem 1.1]{choi2006mean}. The only minor difference is the additional integral over \(t_{2\ell}\) in the last factor. 
After inserting Perron's formula for the \(n_{2\ell}\)-sum, this \(t_{2\ell}\)-integral is bounded trivially, and the remaining argument is the same.
\end{proof}

\begin{lem}\label{D21}
Let $A,\varepsilon>0$. We have
\begin{equation} \sum_{x-y<n \leq x} (\Lambda(n) - 1) \ll \frac{y}{(\ln{x})^A} \end{equation}
for $x \geq 2, y \in [x^{7/12+\varepsilon},x]$. 
\end{lem}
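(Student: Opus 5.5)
This is Huxley's prime number theorem in short intervals, in its form with a $(\ln x)^{-A}$ saving. I would derive it from the explicit formula together with a zero-density estimate and a Vinogradov--Korobov zero-free region, rather than from the combinatorial decompositions used elsewhere in the paper.

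The starting point is the truncated explicit formula. For $2\le T\le x$,
\[ \sum_{n\le x}\Lambda(n) = x - \sum_{|\gamma|\le T}\frac{x^{\rho}}{\rho} + O\Big(\frac{x(\ln x)^2}{T}\Big). \]
Subtracting this at $x-y$ gives
\[ \sum_{x-y<n\le x}\Lambda(n) - y = -\sum_{|\gamma|\le T}\frac{x^\rho-(x-y)^\rho}{\rho} + O\Big(\frac{x(\ln x)^2}{T}\Big). \]
Each zero contributes at most $\bigl|\int_{x-y}^x u^{\rho-1}\,du\bigr| \le y\,x^{\beta-1}$. I choose $T = x^{1+\varepsilon/2}/y$, so the error term is $\ll y x^{-\varepsilon/3}$. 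Since $y\ge x^{7/12+\varepsilon}$, we have $T\le x^{5/12}$. The integer sum $\sum_{x-y<n\le x}1 = y+O(1)$, so it remains to show
\[ \sum_{|\gamma|\le T} x^{\beta-1} \ll (\ln x)^{-A}. \]

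I bound this by integrating against the zero-counting function:
\[ \sum_{|\gamma|\le T}x^{\beta-1} \ll (\ln x)\Big( x^{-1/2}N(1/2,T) + \int_{1/2}^{1-\delta(T)} N(\sigma,T)\,x^{\sigma-1}\,d\sigma\Big). \]
Here the upper limit $1-\delta(T)$ comes from the Vinogradov--Korobov zero-free region, with $\delta(T) \asymp (\ln T)^{-2/3}(\ln\ln T)^{-1/3}$. For the density I use Huxley's estimate $N(\sigma,T)\ll T^{12(1-\sigma)/5}(\ln T)^{B}$. Then
\[ N(\sigma,T)\,x^{\sigma-1} \ll (\ln x)^B\,\bigl(T^{12/5}/x\bigr)^{1-\sigma}. \]
From $T\le x^{5/12-\varepsilon/2}$ we get $T^{12/5}\le x^{1-c\varepsilon}$, so this is at most $(\ln x)^B x^{-c\varepsilon(1-\sigma)}$. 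On the range $1-\sigma\ge\delta(T)$ this is at most $(\ln x)^B\exp\bigl(-c\varepsilon (\ln x)^{1/3-o(1)}\bigr)$, which is $\ll(\ln x)^{-A-B-2}$. The integral over $\sigma$ then gives the required $(\ln x)^{-A}$, and the term at $\sigma=1/2$ is negligible. For large $y$, where $T$ is small, the same argument works; one may also simply cite the classical Huxley bound directly.

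The main obstacle is the range $1-\sigma\ll 1/\ln x$, where the zero-density estimate alone gives no saving. That is exactly why the Vinogradov--Korobov zero-free region is needed: it replaces $x^{-c\varepsilon(1-\sigma)}$ by a super-logarithmic saving. Since Lemma~\ref{D21} is a classical result (Huxley's theorem), in the paper I would state this reduction briefly and cite the standard references for the details.
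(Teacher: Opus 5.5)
Your proof is correct, but it takes a different route from the one the paper relies on.

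The paper does not prove this lemma; it cites \cite{heath1982prime}. There, Huxley's $7/12$ theorem is re-derived from Heath-Brown's generalized Vaughan identity together with mean-value and large-value estimates for Dirichlet polynomials. The short-interval sum is controlled through integrals $\int|F(1/2+it)|\,dt$ of products of Dirichlet polynomials, not through zero-density estimates.

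You take the classical Huxley path instead:
the truncated explicit formula with $T=x^{1+\varepsilon/2}/y\le x^{5/12-\varepsilon/2}$;
the density bound $N(\sigma,T)\ll T^{12(1-\sigma)/5}(\ln T)^{B}$;
and the Vinogradov--Korobov region, to get a super-logarithmic saving near $\sigma=1$.
You are right that the de la Vall\'ee Poussin region would give only a constant saving there, because $T$ is a power of $x$. Given these black boxes, your argument is shorter. It also yields the stronger saving $\exp\bigl(-c(\ln x)^{1/3}(\ln\ln x)^{-1/3}\bigr)$.

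What the paper's citation buys is uniformity of method. The Dirichlet-polynomial argument of \cite{heath1982prime} is exactly what Lemma~\ref{D22} adapts to character twists and to the convolution weights $\beta(hn)$. Your route is tied to the zeros of $\zeta$ and does not transfer as directly to those general weights.

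One small slip: $u^{\beta-1}$ is decreasing, so the correct bound is $\bigl|\int_{x-y}^{x}u^{\rho-1}\,du\bigr|\le y\,(x-y)^{\beta-1}$, not $y\,x^{\beta-1}$. This costs only a factor $2$ when $y\le x/2$. For $y>x/2$ the statement follows at once from the prime number theorem with the classical error term, so nothing is lost.
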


\begin{proof}
See, for example, \cite{heath1982prime}. 
\end{proof}

\begin{lem}\label{D22}
Let $\beta$ be an arithmetic function as in Theorem \ref{T1p}. 

Let $A,\varepsilon>0$ and $\chi \neq 1$ be a non-principal primitive Dirichlet character modulo $\ll (\ln{X})^A$. Let $h \leq X^{1/2}$ and $T \ll (X/h)^{5/12-\varepsilon}$. 
We have
\begin{equation} \int_{-T}^{T} \left| \sum_{n} \beta(hn) \chi(n) n^{it} \right| dt \ll \frac{X}{h(\ln{X})^A}. \label{short_chi} \end{equation} 
\end{lem}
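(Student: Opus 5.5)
The plan is to reduce \eqref{short_chi} to a short-interval statement for the twisted sum and then use a zero-density/Siegel–Walfisz input. For $k=1$ this is the familiar bound for $\int_{-T}^{T}|\sum_{n}\Lambda(hn)\chi(n)n^{it}|dt$. When $h>1$, the sum $\sum_n\Lambda(hn)\chi(n)n^{it}$ is trivially $\ll(\ln X)^{2}$, because $\Lambda(hn)\neq0$ forces $hn$ to be a prime power. The integral is therefore $\ll T(\ln X)^2\ll (X/h)^{5/12}(\ln X)^2$, which is acceptable. So the real case is $h=1$ with general $k$.

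For general $k$ I would first apply \eqref{hrem}, exactly as in Lemma \ref{D1}, to remove $h$. The terms with some $q_i>1$ have at least one factor bounded by $\ln X$, while the remaining factors are sums over ranges of total length $\ll X^{1-\eta}/h$. Integrating trivially over $|t|\le T$ gives
\[ \ll T X^{1-\eta}(\ln X)^{O(1)}/h , \]
which is not small enough on its own. For these terms I would instead treat each remaining factor separately, as below. The main case is therefore a product
\[ \prod_{i\le I}\Bigl|\sum_{L_i<n_i\le L_i'}\Lambda(n_i)\chi(n_i)n_i^{it}\Bigr| \]
with $\prod L_i\asymp X/h$.

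The key pointwise input is the following. For primitive $\chi\ne1$ of modulus $\ll(\ln X)^A$ and $|t|\le L^{5/12-\varepsilon}$,
\[ \sum_{L<n\le 2L}\Lambda(n)\chi(n)n^{it}\ll L(\ln X)^{-B} . \]
I would prove this from the explicit formula truncated at height $L^{7/12+\varepsilon}$, using Huxley's zero-density estimate $N(\sigma,T,\chi)\ll T^{12(1-\sigma)/5+\varepsilon}$ together with the Vinogradov–Korobov zero-free region. This is the same input that gives Lemma \ref{D21}, now with a character twist; the absence of exceptional zeros for such small moduli is handled by Siegel's theorem. For the short range $L$ near $X^\eta$, the condition $T\le (X/h)^{5/12}$ does not give $|t|\le L^{5/12}$. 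So I would apply the bound only to the largest factor $L_{\max}\ge (X/h)^{1/k}$. That does not work either, since $T$ can exceed $L_{\max}^{5/12}$ when $k\ge2$.

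The cleaner route for $k\ge2$ is to avoid factorising. I would pass from $\int_{-T}^T|\sum_n\beta(n)\chi(n)n^{it}|dt$ to short sums via Lemma \ref{Gal} in reverse, or via a smoothed Perron argument. By Cauchy--Schwarz,
\[ \int_{-T}^T|\cdot|\,dt\le (2T)^{1/2}\Bigl(\int_{-T}^T|\cdot|^2dt\Bigr)^{1/2}, \]
and the Gallagher lemma converts the mean square into
\[ \Bigl(\frac{T}{X/h}\Bigr)^2\int \Bigl|\sum_{x<n\le x+X/(hT)}\beta(n)\chi(n)\Bigr|^2dx . \]
This needs $\sum_{x<n\le x+y}\beta(n)\chi(n)\ll y(\ln X)^{-A}$ for $y\ge (X/h)^{7/12+\varepsilon}$, uniformly in $x$. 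Since $\beta$ is a convolution of $\Lambda$-pieces, this follows from the prime case, Lemma \ref{D21} twisted by $\chi$, applied to the longest variable. Fixing the other variables, the condition on the longest one runs over an interval of length $y/\prod_{j\ne i}n_j\ge L_i^{7/12+\varepsilon}$. The twist by $\chi$ is handled by splitting into residue classes mod $r\ll(\ln X)^A$ together with Siegel--Walfisz in short intervals. I expect the main obstacle to be getting genuine uniformity of the short-interval prime theorem in arithmetic progressions to tiny moduli at length $x^{7/12+\varepsilon}$, and checking that the Cauchy--Schwarz step loses no power of $T$ beyond what the exponent $5/12$ allows.
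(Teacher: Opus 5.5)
Your reduction of the case $k=1$, $h>1$ to $h=1$ is fine. The route you propose for the main bound, however, has a genuine gap, and it sits exactly at the point you flag at the end.

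The pointwise estimate for $\sum_{L<n\le 2L}\Lambda(n)\chi(n)n^{it}$ cannot help by itself: integrating it over $|t|\le T$ loses a full factor $T$.

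The Cauchy--Schwarz/Gallagher route loses $T^{1/2}$. For $a_n$ supported on $n\asymp Y:=X/h$, Gallagher's lemma reads
\[ \int_{-T}^{T}\Bigl|\sum_{n}a_n n^{it}\Bigr|^2dt\ll T^2\int_{0}^{\infty}\Bigl|\sum_{x<n\le xe^{\pi/T}}a_n\Bigr|^2\frac{dx}{x}\ll\frac{T^2}{Y}\int_{x\asymp Y}\Bigl|\sum_{x<n\le xe^{\pi/T}}a_n\Bigr|^2dx, \]
so your factor $(T/Y)^2$ should be $T^2/Y$; you lost a factor $Y$. With short sums of size $(Y/T)(\ln X)^{-A}$ this gives only $\int_{-T}^{T}|\cdot|^2dt\ll Y^2(\ln X)^{-2A}$. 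That is merely a logarithmic saving over the mean value theorem. Cauchy--Schwarz then yields $\int_{-T}^{T}|\cdot|\,dt\ll T^{1/2}Y(\ln X)^{-A}$, which exceeds the target by up to roughly $(X/h)^{5/24}$. Closing the argument this way would need a power saving $T^{-1/2}$ in the short sums.

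The logical direction is reversed here. The bound \eqref{short_chi} is what one inserts into Perron's formula to obtain primes in short intervals. Short-interval information with only a logarithmic saving controls the mean square, which is too weak.

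There is also a separate problem: short-interval bounds for $\beta\chi$ at length $y=(X/h)^{7/12+\varepsilon}$ do not follow from the prime case in the longest variable. Fixing the other variables leaves an interval of length $L_i(X/h)^{-5/12+\varepsilon}$. This is shorter than $L_i^{7/12+\varepsilon}$ whenever $L_i<X/h$.

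The missing idea is to bound the $L^1$ integral directly. After \eqref{hrem}, as in Lemma \ref{D1}, the paper expands each $\Lambda$-factor by Heath-Brown's identity into products of polynomials with coefficients $\mu$, $1$, $\ln$. It then applies the Dirichlet-polynomial method of \cite{heath1982prime} to $\int_{-T}^{T}|\prod_jF_j(t)|\,dt$, with two changes:
\begin{itemize}
\item since $L(s,\chi)$ has no pole, long smooth factors $M_j\ge X^{1/2}$ can be treated as in \cite[Lemma 2.3]{choi2006mean};
\item the range $|t|\le\exp((\ln X)^{1/3})$ is covered by Siegel--Walfisz.
\end{itemize}

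For $k=1$, your zero-density idea can be rescued if you integrate the truncated explicit formula in $t$ instead of using it pointwise. Each zero then contributes $\ll Y^{\beta}\ln X$, so the integral is $\ll(\ln X)\sum_{|\gamma|\le 2T(\ln X)^{C}}Y^{\beta}$ plus an acceptable truncation error. Huxley's density estimate, the Vinogradov--Korobov region and Siegel's theorem make this $\ll Y(\ln X)^{-A}$ for $T\le Y^{5/12-\varepsilon}$. For $k\ge2$ with short factors you still need something that exploits the product structure, and that is what the Heath-Brown decomposition provides.
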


\begin{proof}
If $k=1$, then we may suppose that $h=1$. The claim follows from an argument similar to that in \cite{heath1982prime} with the following modifications:
\begin{itemize}
\item Since \(L(s,\chi)\) has no pole at \(s=1\), an argument similar to that in \cite[Lemma 2.3]{choi2006mean} allows us to treat also the range \(M_j \geq X^{1/2}\) by the method of Section 4 of \cite{heath1982prime}. 
\item We can include the integration over $|t| \leq \exp((\ln{X})^{1/3})$, since the Siegel--Walfisz theorem gives
\[ \sum_{n \sim M} f(n) \chi(n) n^{it} \ll M \exp\left( - c (\ln{X})^{1/2} \right), \quad f \in \{1,\ln,\mu\} \]
in this range if $X^{\varepsilon} \leq M \leq X$. 
\end{itemize}

If $k\geq 2$, the claim follows from the argument used in Lemma \ref{D1} and \cite{heath1982prime} with the above modifications. 
\end{proof}

\subsection{Proof of Theorem \ref{T1p}}

Let $\theta \in (0,1)$ and set $D=X^{\theta}$. 

Let $\beta$ be an arithmetic function satisfying the conditions in Theorem \ref{T1p}. Let $\Lambda'(n)=\1_{\mathbb{P}}(n) \ln{n}$.

For $d \geq 1, l \bmod d$ and Dirichlet character $\chi$, we write
\[ S^{(1)}_{l,d}(\alpha,\chi) = \sum_{n \equiv l \bmod d} \Lambda'(n) \chi(n) e(\alpha n) w_1(n/X), \quad S^{(2)}_{d}(\alpha,\chi) = \sum_{n} \beta(nd) \chi(n) e(\alpha nd) w_2(nd/X). \]
We omit $\chi$ if $\chi=1$, and $d,l$ if $d=l=1$.

By orthogonality, we have
\begin{equation} \sum_{\substack{m+n=N \\ m \equiv l \bmod d}} \Lambda(m) \beta(n) w_1(m/X) w_2(n/X) = \int_{\mathbb{T}} S^{(1)}_{l,d}(\alpha) S^{(2)}(\alpha) e(-N\alpha) d\alpha + O(X^{1/2+\varepsilon} ). \label{Morth} \end{equation}
Let $\varepsilon>0$ and set $P=DX^{\varepsilon}$. We define the major arc
\[ \mathfrak{M} = \bigcup_{q \leq P} \bigcup_{a \in \pZ{q}} \mathfrak{M}_{q,a}, \quad \mathfrak{M}_{q,a} = \{ \alpha \in \mathbb{T} : \| \alpha - \frac{a}{q} \| \leq \frac{P}{qX} \} \]
and the minor arc $\mathfrak{m} = \mathbb{T} \setminus \mathfrak{M}$. 
We assume $\theta < 1/2-\varepsilon$ so that the arcs $\mathfrak{M}_{q,a}$ are disjoint. 
Note that for $\alpha \in \mathfrak{m}$, there exist $P \leq q \leq XP^{-1}, (a,q)=1$ such that $|\alpha-a/q| \leq 1/q^2$ by Dirichlet's approximation theorem and the definition of $\mathfrak{m}$.

\begin{lem}[Minor arc bound]\label{Min}
If $\theta \leq 1/6-\varepsilon$, then
\begin{equation}
\sum_{X/2 < N \leq X} \left( \sum_{d \leq D} \sup_{l} \left| \int_{\mathfrak{m}} S^{(1)}_{l,d}(\alpha) S^{(2)}(\alpha) e(-N\alpha) d\alpha \right| \right)^2 \ll X^{3-\varepsilon/2}. \label{mb}
\end{equation}
\end{lem}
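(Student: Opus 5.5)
The plan is to remove the supremum over $l$ by expanding the congruence $m \equiv l \bmod d$ in additive characters. This turns the $l$-dependence into a harmless unimodular phase. After that, Bessel's inequality in $N$ and Theorem \ref{Lmb} on the minor arc finish the argument.

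\textbf{Step 1: removing $l$.} From
\[ \1_{m\equiv l \bmod d} = \frac{1}{d}\sum_{b \in \aZ{d}} e\big(b(m-l)/d\big) \]
we get
\[ S^{(1)}_{l,d}(\alpha) = \frac1d\sum_{b\in\aZ{d}} e(-bl/d)\, S^{(1)}(\alpha+b/d). \]
Set
\[ I_{d,b}(N) = \int_{\mathfrak m} S^{(1)}(\alpha+b/d)\, S^{(2)}(\alpha)\, e(-N\alpha)\,d\alpha , \]
which does not depend on $l$. Then
\[ \sup_l \Big|\int_{\mathfrak m} S^{(1)}_{l,d}(\alpha) S^{(2)}(\alpha) e(-N\alpha)d\alpha\Big| \le \frac1d\sum_b |I_{d,b}(N)|. \]
By Cauchy--Schwarz over the pairs $(d,b)$, and since $\sum_{d\le D}\sum_{b} d^{-2}\ll \ln X$, we obtain
\[ \Big(\sum_{d\le D}\frac1d\sum_b|I_{d,b}(N)|\Big)^2 \ll (\ln X)\sum_{d\le D}\sum_{b\in\aZ{d}}|I_{d,b}(N)|^2 . \]

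\textbf{Step 2: summing over $N$.} We extend the $N$-sum to all of $\mathbb Z$. For fixed $(d,b)$, the numbers $I_{d,b}(N)$ are the Fourier coefficients of $\1_{\mathfrak m}(\alpha)S^{(1)}(\alpha+b/d)S^{(2)}(\alpha)$, so Bessel's inequality gives
\[ \sum_N |I_{d,b}(N)|^2 \le \int_{\mathfrak m}|S^{(1)}(\alpha+b/d)|^2|S^{(2)}(\alpha)|^2d\alpha. \]
Summing over $d,b$ and pulling out a supremum, the left-hand side of \eqref{mb} is
\[ \ll (\ln X)\,\sup_{\alpha\in\mathfrak m}\Big(\sum_{d\le D}\sum_{b\in\aZ{d}}|S^{(1)}(\alpha+b/d)|^2\Big)\int_{\mathbb T}|S^{(2)}(\alpha)|^2d\alpha. \]
By Parseval and \eqref{D0eqn}, $\int_{\mathbb T}|S^{(2)}|^2 \ll X(\ln X)^{O(1)}$.

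\textbf{Step 3: the minor-arc input.} Fix $\alpha\in\mathfrak m$. As noted before the lemma, there is an approximation $|\alpha - a/q|\le q^{-2}$ with $P\le q\le X/P$, where $P=DX^{\varepsilon}$. Replacing $\Lambda'$ by $\Lambda$ changes each sum by $O(X^{1/2}\ln X)$, which costs only $O(D^2X^{1+o(1)})$ in total and is acceptable. We split $d\le D$ into $O(\ln X)$ dyadic ranges $d \sim D'$ and apply Theorem \ref{Lmb} with $w=w_1$ and $\varepsilon/4$ in place of $\varepsilon$. This gives
\[ X^{2+\varepsilon/4}\Big(\frac{D}{q}+\frac{qD}{X}+D^2X^{-1/3}\Big) \ll X^{2+\varepsilon/4}\big(X^{-\varepsilon}+X^{-\varepsilon}+X^{-2\varepsilon}\big) \ll X^{2-3\varepsilon/4}. \]
Here $D/q\le X^{-\varepsilon}$ and $qD/X\le X^{-\varepsilon}$ follow from $P \le q\le X/P$, and $D^2X^{-1/3}\le X^{-2\varepsilon}$ follows from $\theta\le 1/6-\varepsilon$. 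Combining this with Step 2, the left-hand side of \eqref{mb} is $\ll X^{3-3\varepsilon/4}(\ln X)^{O(1)}\ll X^{3-\varepsilon/2}$.

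\textbf{Main obstacle.} I expect the crucial point to be Step 1, the order of operations with respect to $l$. Bessel's inequality in $N$ can only be used once the $l$-dependence has been turned into a phase $e(-bl/d)$ of modulus one. This is exactly why Theorem \ref{Lmb} has to control shifts $\alpha+b/d$ for all residues $b$ simultaneously, rather than a single frequency. Everything else is routine once Theorem \ref{Lmb} is available.
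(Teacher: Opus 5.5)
Your proof is correct and follows essentially the same route as the paper. You expand the congruence in additive characters so that $l$ enters only through a unimodular phase, then apply Cauchy--Schwarz over $(d,b)$, Bessel's inequality in $N$, Theorem \ref{Lmb} pointwise on $\mathfrak{m}$ (using $P\le q\le X/P$), and the $L^2$ bound for $S^{(2)}$. The only cosmetic difference is this: the paper first reduces $b/d$ to lowest terms $b'/d'$ with $(b',d')=1$, at the cost of a factor $\ln X$, and so needs only the version of Theorem \ref{Lmb} over $(\mathbb{Z}/d\mathbb{Z})^{\times}$. You keep all $b\in\mathbb{Z}/d\mathbb{Z}$ and use Theorem \ref{Lmb} exactly as stated.
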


\begin{proof}
By the orthogonal relation
\[ \1_{n \equiv l \bmod d} = \frac{1}{d} \sum_{b \in \aZ{d}} e(b(n-l)/d) \]
and the triangle inequality, we have
\[ \sup_{l} \left| \int_{\mathfrak{m}} S^{(1)}_{l,d}(\alpha) S^{(2)}(\alpha) e(-N\alpha) d\alpha \right| \leq \frac{1}{d} \sum_{b \in \aZ{d}}  \left| \int_{\mathfrak{m}} S^{(1)}(\alpha+b/d) S^{(2)}(\alpha) e(-N\alpha) d\alpha \right|. \]
Writing $b/d=b'/d'$ with $(b',d')=1$, we have
\begin{align*}
& \sum_{d \leq D} \frac{1}{d} \sum_{b \in \aZ{d}}  \left| \int_{\mathfrak{m}} S^{(1)}(\alpha+b/d) S^{(2)}(\alpha) e(-N\alpha) d\alpha \right|, \\
&\ll (\ln{X}) \sum_{d' \leq D} \frac{1}{d'} \sum_{b' \in \pZ{d'}} \left| \int_{\mathfrak{m}} S^{(1)}(\alpha+b'/d') S^{(2)}(\alpha) e(-N\alpha) d\alpha \right|. 
\end{align*}
Thus, using the Cauchy--Schwarz inequality and Bessel's inequality, we see that the left-hand side of \eqref{mb} is bounded by 
\begin{align}
&\ll (\ln{X})^2 \sum_{X/2 < N \leq X} \left( \sum_{d \leq D} \sum_{b \in \pZ{d}} \left| \int_{\mathfrak{m}} S^{(1)}(\alpha+b/d)S^{(2)}(\alpha) e(-N\alpha) d\alpha \right|^2 \right) \left( \sum_{d \leq D} \sum_{b \in \aZ{d}} \frac{1}{d^2} \right), \notag \\
&\ll (\ln{X})^3 \sum_{d \leq D} \sum_{b \in \pZ{d}} \int_{\mathfrak{m}} |S^{(1)}(\alpha+b/d)|^2 |S^{(2)}(\alpha)|^2 d\alpha. \label{dbS}
\end{align}
After discarding prime powers trivially, we use Theorem \ref{Lmb} and 
\[ \int_{\mathbb{T}}  |S^{(2)}(\alpha)|^2 d\alpha \ll X(\ln{X})^{O(1)}. \]
This gives \eqref{mb}. 

\end{proof}

\begin{rem}
In \cite{maier1990unusually}, Maier and Pomerance used classical minor-arc bounds for $S^{(2)}(\alpha)$ in the case $\beta=\Lambda$. Their method would\footnote{In the step corresponding to \cite[(6.6)]{maier1990unusually}, the supremum over the residue class should be retained before applying Bessel's inequality.} correspond to bounding \eqref{dbS} by
\begin{align*}
& \left( \sup_{\alpha \in \mathfrak{m}} |S^{(2)}(\alpha)|^2 \right) \sum_{d \leq D} \sum_{b \in \pZ{d}} \int_{\mathbb{T}} |S^{(1)}(\alpha+b/d)|^2 d\alpha, \\
&\ll \left( \frac{X^2}{P} + X^{8/5} + XP \right) X D^2 (\ln{X})^{O(1)}. 
\end{align*}
This is sufficient on the minor arcs for $D \leq X^{1/5-\varepsilon}$, but only with $P=X^{\varepsilon}D^2$.
\end{rem}

\begin{lem}[Reduction for the major arc]\label{Mred}
We have
\begin{align*}
&\sum_{X/2 < N \leq X} \sum_{d \leq D} \sup_{l:(d,l(N-l))=1} \left| \int_{\mathfrak{M}} S^{(1)}_{l,d}(\alpha) S^{(2)}(\alpha) e(-N\alpha) d\alpha - M_0 \right| \\
&\ll X^{2} (\ln{X})^{O(1)} (X^{-\eta/2} + P^{-1/2}) + X (\ln{X})^{O(1)} \sup_{Q \ll P} (X^{1/2} E_1+E_2),
\end{align*}
where $E_i=E_i(Q)$ are defined by
\[ E_1 = Q^{-1} \left( \int_{|\epsilon| \leq P/(QX)} |S^{(1)}(\epsilon;\Lambda'-1)|^2 d\epsilon \right)^{1/2} \]
\begin{align*}
E_2 &= \underset{r_1r_2 \neq 1}{\sum_{r_1 \leq QD} \sum_{h r_2 \leq Q}} \sideset{}{^*}\sum_{\xi_1 \bmod r_1} \sideset{}{^*}\sum_{\xi_2 \bmod r_2} \frac{h r^{1/2}_2}{[r_1,h r_2]} (\tau(r_1) \tau(r_2) \tau(h))^{4} \\
& \times \int_{|\epsilon| \leq P/(QX)} |S^{(1)}(\epsilon;\xi_1) S^{(2)}_{h}(\epsilon;\xi_2)| d\epsilon
\end{align*}
and we write 
\[ S^{(i)}_h(\epsilon;f) = \sum_{n} f(nh) e(nh \epsilon) w_i(nh/X), \quad S^{(i)}(\epsilon;f) = S^{(i)}_1(\epsilon;f), \]
for an arithmetic function $f$ and $i=1,2$.  
\end{lem}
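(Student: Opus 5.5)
On each arc $\mathfrak{M}_{q,a}$ we write $\alpha=a/q+\epsilon$ with $|\epsilon|\le P/(qX)$ and expand $S^{(1)}_{l,d}(a/q+\epsilon)\,S^{(2)}(a/q+\epsilon)$ in characters. Before doing so, we discard from $S^{(1)}$ the primes dividing $[d,q]$; since $\Lambda'$ is supported on primes, this costs $O(\ln X)$ per term. For $S^{(2)}$ we split $n$ according to $(n,q)=\ell$, exactly as in Lemma \ref{sumA}.

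Summing over $a\in\pZ{q}$ and applying Lemma \ref{sumA} with $m$ the prime variable and $n$ the $\beta$-variable, the major-arc integral becomes a sum over $q\le P$, over $\ell\ell^*\mid q$, and over characters $\chi_1\bmod [d,q]$ and $\chi_2\bmod q/\ell$. Each term is the integral over $|\epsilon|\le P/(qX)$ of
\[ \frac{\mu(\ell^*)\chi_2^*(\ell^*)\tau(\overline{\chi_2})R}{\varphi([d,q])\varphi(q/\ell)}\, S^{(1)}(\epsilon;\chi_1^*)\, S^{(2)}_{\ell\ell^*}(\epsilon;\chi_2^*), \]
times $e(-N\epsilon)$.

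We split this sum into three parts and treat them in turn.

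\emph{(i) Principal characters with $\ell=\ell^*=1$.} Here Lemma \ref{R} gives $R=\mu(q)c_{q/(d,q)}(N)$. We write $\Lambda'=1+(\Lambda'-1)$ in $S^{(1)}$. The main part, with the weight $1$, integrated over all $\epsilon\in\mathbb{T}$, gives
\[ \sum_{q}\frac{\mu(q)^2\varphi((d,q))}{\varphi(d)\varphi(q)^2}\,c_{q/(d,q)}(N)\sum_{m+n=N}\beta(n)w_1w_2, \]
using $\varphi([d,q])=\varphi(d)\varphi(q)/\varphi((d,q))$ for squarefree $q$. By Lemma \ref{sing} this is exactly $M_0$. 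Several error terms arise:
\begin{itemize}
\item Completing the $\epsilon$-range to $\mathbb{T}$ and the $q$-sum to infinity costs a tail. Using $\int_{|\epsilon|>P/(qX)}|S^{(1)}(\epsilon;1)|\,|S^{(2)}|\,d\epsilon$, \eqref{ew_2} and Cauchy--Schwarz, this tail is $\ll X^2P^{-1/2}(\ln X)^{O(1)}$ after summing over $N$ and $d$.
\item The terms with $\ell>1$ and principal $\chi_2$ use $\beta(\ell\cdot)$. By \eqref{D0eq} they contribute $X^{2-\eta}(\ln X)^{O(1)}$.
\item The part of $S^{(1)}$ with weight $\Lambda'-1$ is handled by Cauchy--Schwarz in $\epsilon$ together with the bound $\int|S^{(2)}|^2\ll X(\ln X)^{O(1)}$. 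With $|R|\ll q$ and the sum over $d$, this yields the term $X^{3/2}E_1$, and the dyadic sup over $Q\sim q$ is taken at the end.
\end{itemize}

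\emph{(ii) Non-principal pairs $(\chi_1,\chi_2)$.} We replace $\chi_i$ by the primitive characters $\xi_1\bmod r_1$, $\xi_2\bmod r_2$ that induce them; by the note after Lemma \ref{sumA}, $R$ depends only on $\xi_1,\xi_2$. We then count how many $\chi_1\bmod[d,q]$ and $\chi_2\bmod q/\ell$ share the same primitive characters, and bound the coefficients:
\begin{itemize}
\item $|\tau(\overline{\chi_2})|\le r_2^{1/2}$ by Lemma \ref{tau};
\item $|R|\ll q\tau(q)(r_2,N)^{1/2}\ln\ln N$ by Lemma \ref{R};
\item $\varphi([d,q])\varphi(q/\ell)$ in the denominator.
\end{itemize}
We take the supremum over $l$ before summing over $d$, which is harmless because these bounds are uniform in $l$. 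We then sum over $N$, using $\sum_N (r_2,N)^{1/2}\ll X\tau(r_2)$, and put $h=\ell\ell^*$. The result should be at most $X(\ln X)^{O(1)}E_2$. This requires the weight $h r_2^{1/2}/[r_1,hr_2]$ to dominate after summing over $d\le D$ with $r_1\mid[d,q]$ and over $q$ with $hr_2\mid q$; that is, for fixed $r_1$ and $hr_2$, the quantity
\[ \sum_{d,q}\frac{q\,r_2^{1/2}\,\tau(q)}{\varphi([d,q])\varphi(q/\ell)} \]
must be $\ll h r_2^{1/2}(\tau\cdots)^4/[r_1,hr_2]$ up to logarithms. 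Restricting $q$ to a dyadic range $Q$ also restricts $\epsilon$ to $|\epsilon|\le P/(QX)$.

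\textbf{Main obstacle.} I expect this divisor and coefficient bookkeeping in (ii) to be the hardest step, especially for non-squarefree $q$ and for large $(q,d^\infty)$, where the bound \eqref{R_bound} of Lemma \ref{R} is essential. I would first parametrize $q=[r_1,hr_2]\cdot t$ and $d$ as a divisor structure over $[d,q]$, and then show that the sums over $t$ and over $d$ each cost only $(\ln X)^{O(1)}$ times the stated divisor powers.
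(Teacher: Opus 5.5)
Your outline is the paper's proof. It uses the same expansion via Lemma~\ref{sumA}, the split into $r_1=r_2=1$ versus $r_1r_2\neq 1$, and the decomposition $\Lambda'=1+(\Lambda'-1)$ with Lemma~\ref{sing} for the main term. The non-principal part likewise uses Lemmas~\ref{R} and \ref{tau} together with $\sum_N (r_2,N)^{1/2}\ll X\tau(r_2)$.

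\emph{The step that fails.} For the $(\Lambda'-1)$-term you use $|R|\ll q$. The coefficient is then $\asymp (d,q)/(dq)$ up to $\ln\ln X$ factors. After summing over $N$ and $d$, the block $q\sim Q$ contributes
\[ \ll X^{3/2}(\ln X)^{O(1)}\Big(\int_{|\epsilon|\le P/(QX)}|S^{(1)}(\epsilon;\Lambda'-1)|^2\,d\epsilon\Big)^{1/2}=X^{3/2}(\ln X)^{O(1)}\,Q\,E_1(Q), \]
which is a factor $Q$ worse than claimed. Your tail bullet cannot use this bound either. With $|R|\ll q$, the completion $q>P$ costs $X^2(\ln X)^{O(1)}\sum_{q>P}\mu(q)^2\tau(q)/q$, which diverges, and the $\epsilon$-tail gives only $X^2\ln X$.

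\emph{The fix.} Keep the exact value $R=\mu(q)c_{q/(d,q)}(N)$, bound $|c_{q/(d,q)}(N)|\le (q,N)$, and average over $N$ using $\sum_{N\le X}(q,N)\ll X\tau(q)$. After the $N$- and $d$-sums, the weight of each $q$ is then $\ll X\tau(q)^2(\ln X)^{O(1)}q^{-2}$. This yields the $Q^{-1}$ in $E_1$, the bound $X^2P^{-1}$ for $q>P$, and $X^2P^{-1/2}$ for the $\epsilon$-tail.

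With this fix, your tail argument (Cauchy--Schwarz with $\int_{|\epsilon|>\delta}|S^{(1)}(\epsilon;1)|^2\,d\epsilon\ll\delta^{-1}$, from \eqref{ew_2}) is valid. It is slightly simpler than the paper's route, which writes $(q,N)\le\sum_{r\mid (q,N)}r$ and applies Bessel over $N\equiv 0\bmod r$ with the pointwise bound $|S^{(1)}(\epsilon;1)|\ll qX/P$. The proof of Lemma~\ref{E12} actually bounds the integral in $E_1$ by $X^{1/2}(\ln X)^{-A}$ uniformly in $Q$, so your weaker form would still suffice downstream. It does not, however, prove the lemma as stated.

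\emph{Part (ii).} The bookkeeping you flag as the main obstacle is routine, but the parametrization $q=[r_1,hr_2]t$ is not available. You only know $r_1\mid[d,q]$, not $r_1\mid q$. The paper uses $q/(\varphi([d,q])\varphi(q/\ell))\ll \ell(\ln\ln X)^2/[d,q]$ and enlarges the sum over $(d,q)$ to a sum over $m=[d,q]$ with $[r_1,hr_2]\mid m$. Each $m$ arises from at most $\tau(m)^2$ pairs, so the sum is $\ll(\tau(r_1)\tau(r_2)\tau(h))^3(\ln X)^{O(1)}/[r_1,hr_2]$. Since \eqref{R_bound} is uniform in $q$ and $d$, non-squarefree $q$ and large $(q,d^\infty)$ need no separate treatment.

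\emph{Exponent in the $h>1$ terms.} The principal terms with $h=\ell\ell^*>1$ give $X^{2-\eta/2}(\ln X)^{O(1)}$ by Cauchy--Schwarz and \eqref{D0eq}, not $X^{2-\eta}$. This includes the case $\ell=1<\ell^*$, which your bullet omits. The bound $X^{2-\eta/2}$ is exactly what the statement allows.
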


\begin{proof}

Let $M=M(d,l,N)$ denote
\begin{align*}
M &= \int_{\mathfrak{M}} S^{(1)}_{l,d}(\alpha) S^{(2)}(\alpha) e(-N\alpha) d\alpha \\
&= \sum_{q \leq P} \sum_{a \in \pZ{q}} e(-Na/q) \int_{|\epsilon| \leq P/(qX)} S^{(1)}_{l,d}(\epsilon+a/q) S^{(2)}(\epsilon+a/q) e(-N\epsilon) d\epsilon. 
\end{align*}

By Lemma \ref{sumA}, we have
\begin{align*}
&\sum_{a \in \pZ{q}} e(-Na/q) S^{(1)}_{l,d}(\epsilon+a/q) S^{(2)}(\epsilon+a/q) \\
&= \sum_{\ell \ell^*|q} \frac{\mu(\ell^*)}{\varphi([d,q]) \varphi(q/\ell)} \sum_{\chi_1 \bmod [d,q]} \sum_{\chi_2 \bmod q/\ell} \chi^*_2(\ell^*) \tau(\overline{\chi_2}) R(\chi_1,\chi_2;q,d,l,N) S^{(1)}(\epsilon;\chi^*_1) S^{(2)}_{\ell \ell^*}(\epsilon;\chi^*_2).
\end{align*}
We write $(\chi_1)^*=\xi_1 \bmod r_1$ and $(\chi_2)^*=\xi_2 \bmod r_2$. Note that $(\ell^*,r_2)=1$ and $r_2|q/\ell$ imply $\ell \ell^* r_2|q$. Thus
\begin{align*}
M &= \sum_{q \leq P} \sum_{r_1|[d,q]} \sum_{\ell \ell^* r_2|q} \frac{\mu(\ell^*)}{\varphi([d,q]) \varphi(q/\ell)} \sideset{}{^*}\sum_{\xi_1 \bmod r_1} \sideset{}{^*}\sum_{\xi_2 \bmod r_2} \xi_2(\ell^*) \tau(\overline{\xi_2} \chi_0) R(\xi_1,\xi_2,q,d,l,N) \\
& \times \int_{|\epsilon| \leq P/(qX)} S^{(1)}(\epsilon;\xi_1) S^{(2)}_{\ell \ell^*}(\epsilon;\xi_2) e(-N\epsilon) d\epsilon,
\end{align*}
where $\chi_0$ is the principal character modulo $q/\ell$.

Let $M_1, M_2$ be the contributions from the term with $r_1=r_2=1$ and $r_1 r_2 \neq 1$ to $M$ respectively.

\vskip\baselineskip

We show that 
\begin{equation} \sum_{N \leq X} \sum_{d \leq D} \sup_{l:(d,l(N-l))=1} |M_1 - M_0| \ll X^{2} (\ln{X})^{O(1)} (X^{-\eta/2} + P^{-1/2}) + X^{3/2} (\ln{X})^{O(1)} \sup_{Q \leq P} E_1 \label{11-0}. \end{equation}

By Lemma \ref{R}, we have 
\begin{align*}
M_{1} &= \sum_{q \leq P} \sum_{\ell \ell^*|q} \frac{\mu(q)^2 \mu(\ell \ell^*) \varphi((d,q)) \varphi(\ell)}{\varphi(d) \varphi(q)^2} c_{q/(d,q)}(N) \int_{|\epsilon| \leq P/(qX)} S^{(1)}(\epsilon) S^{(2)}_{\ell \ell^*}(\epsilon) e(-N\epsilon) d\epsilon. 
\end{align*}
Let $M_{11}, M_{12}$ denote the contributions from the terms with $\ell \ell^*=1$ and $\ell \ell^* \neq 1$ to $M_{1}$, respectively.

Using $|c_{q/(d,q)}(N)| \leq (q,N)$ and standard upper and lower bounds for $\varphi$, we have
\[ M_{12} \ll \ln{X} \sum_{q \leq P} \sum_{\ell \ell^*|q} \frac{\ell (q,N) (d,q)}{dq^2} \int_{|\epsilon| \leq 1/2} |S^{(1)}(\epsilon) S^{(2)}_{\ell \ell^*}(\epsilon)| d\epsilon. \]
Using the Cauchy--Schwarz inequality and Lemma \ref{D0}, we obtain
\begin{align*}
\int_{|\epsilon| \leq 1/2} |S^{(1)}(\epsilon) S^{(2)}_{\ell \ell^*}(\epsilon)| d\epsilon &\leq \left( \int_{|\epsilon| \leq 1/2} |S^{(1)}(\epsilon)|^2 d\epsilon \right)^{1/2} \left( \int_{|\epsilon| \leq 1/2} |S^{(2)}_{\ell \ell^*}(\epsilon)|^2 d\epsilon \right)^{1/2}, \\
&\ll X^{1-\eta/2} (\ln{X})^{O(1)}. 
\end{align*}
We bound the sum over $\ell, \ell^*$ trivially. This gives
\begin{equation*} M_{12} \ll X^{1-\eta/2} (\ln{X})^{O(1)} \sum_{q \leq P} \frac{(q,N) (d,q) \tau(q)^2}{dq}. \end{equation*}
By the inequality $(m,n) \leq \sum_{r|m,n} r$, we have
\begin{equation} \sum_{N \leq X} (q,N) \ll X \tau(q), \quad \sum_{d \leq D} \frac{(d,q)}{d} \ll \tau(q) \ln{X}. \label{dNq} \end{equation}
This gives
\begin{align}
\sum_{N \leq X} \sum_{d \leq D} \sup_{(d,l(N-l))=1} |M_{12}| &\ll X^{2-\eta/2} (\ln{X})^{O(1)} \sum_{q \leq P} \frac{\tau(q)^4}{q} \notag \\
&\ll X^{2-\eta/2} (\ln{X})^{O(1)} \label{SM12b}
\end{align}

We write
\[ S^{(1)}(\epsilon;\Lambda') S^{(2)}(\epsilon;\beta) = S^{(1)}(\epsilon;1) S^{(2)}(\epsilon;\beta) + S^{(1)}(\epsilon;\Lambda'-1) S^{(2)}(\epsilon;\beta) \]
and denote by $M_{111}$ and $M_{112}$ the contributions from $1 \times \beta$ and $(\Lambda'-1) \times \beta$ to $M_{11}$, respectively. 

Let
\[ M_{01} = \sum_{q \leq P} \frac{\mu(q)^2 \varphi((d,q))}{\varphi(d) \varphi(q)^2} c_{q/(d,q)}(N) \int_{\mathbb{T}} S^{(1)}(\alpha;1) S^{(2)}(\alpha;\beta) e(-N\alpha) d\alpha \]
\[ M_{02} = \sum_{q > P} \frac{\mu(q)^2 \varphi((d,q))}{\varphi(d) \varphi(q)^2} c_{q/(d,q)}(N) \int_{\mathbb{T}} S^{(1)}(\alpha;1) S^{(2)}(\alpha;\beta) e(-N\alpha) d\alpha \]
so $M_0=M_{01}+M_{02}$ by Lemma \ref{sing}. Then, by \eqref{dNq}, we have
\begin{align*}
\sum_{N \leq X} \sum_{d \leq D} \sup_{(d,l(N-l))=1} |M_{02}| &\ll X \sum_{N \leq X} \sum_{d \leq D} \sum_{q > P} \frac{(q,N) (d,q)}{dq^2} \ln{q}, \\
&\ll X^2 (\ln{X})^{O(1)} \sum_{q > P} \frac{\tau(q)^2}{q^2} \ln{q} \ll X^2 P^{-1} (\ln{X})^{O(1)}.
\end{align*}

By definition, one has
\[ M_{01} - M_{111} = \sum_{q \leq P} \frac{\mu(q)^2 \varphi((d,q))}{\varphi(d) \varphi(q)^2} c_{q/(d,q)}(N) \int_{P/(qX)<|\epsilon| \leq 1/2} S^{(1)}(\epsilon;1) S^{(2)}(\epsilon;\beta) e(-N\epsilon) d\epsilon. \]
This, together with \eqref{dNq} and $(q,N) \leq \sum_{r|N,q} r$, gives
\begin{align}
\sum_{N \leq X} \sum_{d \leq D} \sup_{(d,l(N-l))=1} |M_{111}-M_{01}| &\ll (\ln{X})^{O(1)} \sum_{N \leq X} \sum_{q \leq P} \frac{(q,N) \tau(q)^3}{q^2} \notag \\
& \times \left| \int_{P/(qX)<|\epsilon| \leq 1/2} S^{(1)}(\epsilon;1) S^{(2)}(\epsilon;\beta) e(-N\epsilon) d\epsilon \right|, \notag \\
&\ll (\ln{X})^{O(1)} \sum_{r \leq P} r \sum_{\substack{q \leq P \\ r|q}} \frac{\tau(q)^3}{q^2} \notag \\
& \times \sum_{\substack{N \leq X \\ r|N}} \left| \int_{P/(qX)<|\epsilon| \leq 1/2} S^{(1)}(\epsilon;1) S^{(2)}(\epsilon;\beta) e(-N\epsilon) d\epsilon \right|. \label{SM111bt}
\end{align}
Using the Cauchy--Schwarz inequality and Bessel's inequality, we see that the last sum is bounded by
\[ \ll \left( \frac{X}{r} \right)^{1/2} \left( \int_{P/(qX)<|\epsilon| \leq 1/2} |S^{(1)}(\epsilon;1) S^{(2)}(\epsilon;\beta)|^2 d\epsilon \right)^{1/2}. \]
By Lemma \ref{sumw}, we have
\begin{align*}
\int_{P/(qX)<|\epsilon| \leq 1/2} |S^{(1)}(\epsilon;1) S^{(2)}(\epsilon;\beta)|^2 d\epsilon &\ll \left( \frac{qX}{P} \right)^2 \int_{|\epsilon| \leq 1/2} |S^{(2)}(\epsilon;\beta)|^2 d\epsilon, \\
&\ll \left( \frac{qX}{P} \right)^2  X (\ln{X})^{O(1)}.
\end{align*}
Inserting these bounds into \eqref{SM111bt}, we obtain
\begin{align}
\sum_{N \leq X} \sum_{d \leq D} \sup_{(d,l(N-l))=1} |M_{111}-M_{01}| &\ll (\ln{X})^{O(1)} \sum_{r \leq P} r \sum_{\substack{q \leq P \\ r|q}} \frac{\tau(q)^3}{q^2} \times \left( \frac{X}{r} \right)^{1/2} \left( \frac{qX}{P} \right) X^{1/2}, \notag \\
&\ll X^2 P^{-1} (\ln{X})^{O(1)} \sum_{r \leq P} \frac{\tau(r)^3}{r^{1/2}} \ll X^2 P^{-1/2} (\ln{X})^{O(1)}. \label{SM111b}
\end{align}

We use the Cauchy--Schwarz inequality to bound $M_{112}$. This gives
\begin{equation*} |M_{112}| \ll X^{1/2} \ln{X} \sum_{q \leq P} \frac{(q,N) (d,q)}{d q^2} \left( \int_{|\epsilon| \leq P/(qX)} |S^{(1)}(\epsilon;\Lambda'-1)|^2 d\epsilon \right)^{1/2}. \end{equation*}
Thus, 
\begin{align}
\sum_{N \leq X} \sum_{d \leq D} \sup_{(d,l(N-l))=1} |M_{112}| &\ll X^{3/2} (\ln{X})^2 \sum_{q \leq P} \frac{\tau(q)^2}{q^2} \left( \int_{|\epsilon| \leq P/(qX)} |S^{(1)}(\epsilon;\Lambda'-1)|^2 d\epsilon \right)^{1/2} \label{SM1123b}. 
\end{align}
We divide the sum over $q$ into dyadic ranges $Q < q \leq 2Q$, and use divisor sum bounds. This, together with \eqref{SM12b} and \eqref{SM111b}, gives \eqref{11-0}.

\vskip\baselineskip

For $M_{2}$, we divide the sum over $q$ into dyadic ranges $q \sim Q$. By Lemma \ref{R}, we have
\begin{align*}
& \sum_{N \leq X} \sum_{d \leq D} \sup_{(d,l(N-l))=1} \sum_{\substack{q \sim Q \\ r_1|[d,q] \\ \ell \ell^* r_2 | q}} \frac{|R(\xi_1,\xi_2,q,d,l,N)|r^{1/2}_2}{\varphi([d,q]) \varphi(q/\ell)} \\
&\ll X (\ln{X}) r^{1/2}_2 \ell \tau(r_2) \underset{\substack{r_1|[d,q] \\ \ell \ell^* r_2|q}}{\sum_{d \leq D} \sum_{q \sim Q}} \frac{1}{[d,q]} \tau(q) \\
&\ll X (\ln{X}) r^{1/2}_2 \ell \tau(r_2) \1_{\substack{r_1 \ll QD \\ \ell \ell^* r_2 \ll Q}} \sum_{\substack{d,q \leq X \\ r_1|[d,q] \\ \ell \ell^* r_2|[d,q]}} \frac{1}{[d,q]} \tau([d,q]) \\
&\ll \1_{\substack{r_1 \ll QD \\ \ell \ell^* r_2 \ll Q}} \frac{\ell r^{1/2}_2 \tau(r_1)^3 \tau(r_2)^4 \tau(\ell \ell^*)^3}{[r_1,\ell \ell^* r_2]} X (\ln{X})^{O(1)} \\
\end{align*}
for each $r_1,r_2,\ell,\ell^*$. This, together with $\sum_{\ell \ell^*=h} \ell \leq h \tau(h)$ and Lemma \ref{tau} yields
\[ \sum_{N \leq X} \sum_{d \leq D} \sup_{(d,l(N-l))=1} |M_{2}| \ll X (\ln{X})^{O(1)} \sup_{Q \ll P} E_2. \]
This and \eqref{11-0} complete the proof. 

\end{proof}

\begin{lem}\label{E12}
If $\theta \leq 9/40-2\varepsilon$, then for any $A>0$ and uniformly for all $Q \ll P$, we have
\begin{equation} \label{E1b} E_1 \ll \frac{X^{1/2}}{(\ln{X})^A} \end{equation}
\begin{equation} \label{E2b} E_2 \ll \frac{X}{(\ln{X})^A}. \end{equation}
\end{lem}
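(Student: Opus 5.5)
For \eqref{E1b}, I will apply \eqref{Gal1} of Lemma \ref{Gal} with $f=\Lambda'-1$, $x=X$ and $\delta=P/(QX)$. This gives
\[ \int_{|\epsilon|\le P/(QX)} |S^{(1)}(\epsilon;\Lambda'-1)|^2 d\epsilon \ll \Big(\frac{P}{QX}\Big)^2 X \sup_{|I|\le 2QX/P}\Big|\sum_{n\in I}(\Lambda'(n)-1)\Big|^2 .\]
When $QX/P \ge X^{7/12+\varepsilon}$, Lemma \ref{D21} bounds the inner sum by $(QX/P)(\ln X)^{-A}$, with prime powers discarded trivially. Then $E_1\ll Q^{-1}X^{1/2}(\ln X)^{-A}$, which suffices.

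The condition $QX/P\ge X^{7/12+\varepsilon}$ can fail only when $Q\le P^2X^{-5/12-\varepsilon}$ is false, that is, for $Q$ close to $P$ with $P\ge X^{5/24}$. For $\theta\le 9/40-2\varepsilon$ this is genuinely a possible range, so there I will instead use the interval bound with $|I|\le 2QX/P$ together with the extra saving $Q^{-1}\le X^{-\theta}$. In fact, for $Q$ large the trivial bound $\sum_{n\in I}|\Lambda'-1|\ll |I|\ln X$ already gives $E_1\ll Q^{-1}X^{1/2}\ln X$. So for $Q\ge(\ln X)^{A+1}$ the trivial bound suffices, and for $Q\le(\ln X)^{A+1}$ the interval length is $\gg X^{1-\theta-\varepsilon}\ge X^{7/12+\varepsilon}$, where Lemma \ref{D21} applies.

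For \eqref{E2b}, I will apply Cauchy--Schwarz only when needed; mainly I use \eqref{Gal2} in product form. Via Mellin inversion (as in the proof of \eqref{Gal2}), the $\epsilon$-integral of $|S^{(1)}(\epsilon;\xi_1)S^{(2)}_h(\epsilon;\xi_2)|$ is bounded by Cauchy--Schwarz in $\epsilon$ together with \eqref{Gal2} applied to each factor:
\[ \int_{|\epsilon|\le P/(QX)}|S^{(1)}S^{(2)}_h|\,d\epsilon \ll X^{-1}\Big(\int_{|t|\le T}|\textstyle\sum_n \Lambda'(n)\xi_1(n)n^{-it}|dt\Big)\Big(\int_{|t|\le T}|\textstyle\sum_n\beta(hn)\xi_2(n)n^{-it}|dt\Big)+X^{-10}, \]
with $T=PX^{\varepsilon}/Q$. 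Next I split according to the conductors $r_1,r_2$. If both are at most $(\ln X)^{B}$, then since $r_1r_2\ne1$ at least one character is nonprimitive-trivial-free. Lemma \ref{D22} (and its $k=1$ analogue for $\Lambda'$) gives a saving of $(\ln X)^{-A}$ on that factor, provided $T\ll X^{5/12-\varepsilon}$. This holds since $T\le PX^{\varepsilon}\le X^{\theta+2\varepsilon}$ and $\theta<5/12$. The other factor is bounded by $X(\ln X)^{O(1)}$ through Lemma \ref{D1} with $R$ fixed. If one conductor exceeds $(\ln X)^B$, I will use Lemma \ref{D1} for both factors, summing over dyadic ranges $r_1\sim R_1\le QD$ and $hr_2\le Q$. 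The weight $hr_2^{1/2}/[r_1,hr_2]$ factorises through $[r_1,hr_2]\ge r_1$ and $\ge hr_2$, and the divisibility $d\mid r$ in Lemma \ref{D1} is used with $d=(r_1,hr_2)$ to handle the lcm.

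The main obstacle is the numerology of this last case. The bound from Lemma \ref{D1} is $(X/h+R^2T(X/h)^{11/20})$. I need $R_1^2T X^{11/20}\le X^{1-\varepsilon}R_1$ after dividing by the weight $\approx R_1$, and also the analogous condition for $r_2$. With $R_1\le QD$ and $T\approx P/Q$, this amounts to $QD\cdot P/Q\approx D^2X^{\varepsilon}\le X^{9/20-\varepsilon}$, which is exactly $\theta\le 9/40-\varepsilon$. The $X/h$ term, summed with weight $1/R_1$ over large conductors, leaves an $R^{-1/2}$- or $R^{-1}$-type saving, which gives $(\ln X)^{-A}$ once $B$ is large. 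Once this bookkeeping of the $\tau$ factors and the $h$-sum is checked, \eqref{E2b} follows.
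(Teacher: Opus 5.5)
Your reduction of \eqref{E2b} to products $I^{(1)}(r_1)I^{(2)}_h(r_2)$ matches the paper, as do the $9/40$ numerology for the $R^2T$ terms and the use of Lemma~\ref{D22} for small conductors. Your final argument for \eqref{E1b} is also valid, but the intermediate claim is backwards. The admissible interval length $2QX/P$ grows with $Q$, and already $X/P\ge X^{7/12+2\varepsilon}$, so Lemma~\ref{D21} applies for every $Q\ge 1$, with short intervals bounded trivially. The paper simply enlarges the range to $|\epsilon|\le P/X$ and drops $Q^{-1}$.

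The gap is in the large-conductor case of \eqref{E2b}, for the first term of Lemma~\ref{D1}, which you treat as harmless. The weight is $(r_1,hr_2)/(r_1r_2^{1/2})$, not $1/r_1$. Take $r_2$ small (e.g.\ $\xi_2$ trivial) and $(\ln X)^B<r_1\sim R_1\le Q$. The only available saving is the factor $1/r_1$, but $(r_1,h)$ can be as large as $r_1$, since $h$ runs up to $Q$.

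Your device does not recover this saving. You write $(r_1,hr_2)\le\sum_{g\mid r_1,hr_2}g$ and use $g\mid r_1$ in Lemma~\ref{D1}. However, the first term in \eqref{D1eq} does not decrease with $d$. The main-term contribution is then of size $R_1^{-1}R_2^{-1/2}\cdot X\min(R_1,Q)\cdot X=X^2R_2^{-1/2}\min(1,Q/R_1)$, which gives no saving when $R_2\ll 1$ and $R_1\le Q$. The cruder bound $[r_1,hr_2]\ge\max(r_1,hr_2)$ fails in the same regime. So your argument only covers $R_1\ge Q^{1+\varepsilon}(\ln X)^{10A}$ or $R_2\ge Q^{\varepsilon}(\ln X)^{10A}$, which is just the first step of the paper's proof.

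The missing idea is to move the gcd with $h$ onto the $h$-sum, where $I^{(2)}_h$ decays like $X/h$:
\begin{itemize}
\item bound $(r_1,hr_2)\le(r_1,h)\sum_{g\mid r_1,r_2}g$ with $g\le\min(R_1,R_2)$;
\item apply the divisibility condition of Lemma~\ref{D1} to the $r_2$-sum only;
\item for each fixed $r_1$, use $\sum_{h}(r_1,h)\tau(h)^4/h\ll\tau(r_1)^5(\ln X)^{O(1)}$.
\end{itemize}
This makes the main term $R_1^{-1}R_2^{-1/2}\min(R_1,R_2)X^2$, a saving of essentially $(R_1R_2)^{-1/10}$. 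The $R^2T$ cross terms stay acceptable because there $R_1\le Q^{1+\varepsilon}(\ln X)^{10A}$ and $R_2\le Q^{\varepsilon}(\ln X)^{10A}$. Only after this step is the case $R_1,R_2\le(\ln X)^{100A}$ left for Lemma~\ref{D22}.
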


\begin{proof}
Recall that
\[ E_1 = Q^{-1} \left( \int_{|\epsilon| \leq P/(QX)} |S^{(1)}(\epsilon;\Lambda'-1)|^2 d\epsilon \right)^{1/2} \]
\begin{align*}
E_2 &= \underset{r_1r_2 \neq 1}{\sum_{r_1 \leq QD} \sum_{h r_2 \leq Q}} \sideset{}{^*}\sum_{\xi_1 \bmod r_1} \sideset{}{^*}\sum_{\xi_2 \bmod r_2} \frac{(r_1,h r_2)}{r_1 r^{1/2}_2} (\tau(r_1) \tau(r_2) \tau(h))^{4} \\
& \times \int_{|\epsilon| \leq P/(QX)} |S^{(1)}(\epsilon;\xi_1) S^{(2)}_{h}(\epsilon;\xi_2)| d\epsilon
\end{align*}

We enlarge the range of integration to $[-P/X,P/X]$ and use Lemma \ref{Gal} \eqref{Gal1}. This gives
\[ E_1 \ll X^{1/2} \sup_{\substack{I \subset [\eta X,X] \\ |I| \leq 2XP^{-1}}} \frac{1}{XP^{-1}} \left| \sum_{n \in I} (\Lambda'(n)-1) \right|. \]
We use Lemma \ref{D21} if $|I| \geq X^{1-\varepsilon} P^{-1} \geq X^{7/12+\varepsilon}$ and use trivial bounds otherwise. This gives \eqref{E1b}.

We write
\[ I^{(1)}(r) = \sideset{}{^*}\sum_{\chi \bmod r} \int_{-X^{\varepsilon} PQ^{-1}}^{X^{\varepsilon} PQ^{-1}} \left| \sum_{n \leq X} \Lambda'(n) \chi(n) n^{it} \right| dt, \]
\[ I^{(2)}_h(r) = \sideset{}{^*}\sum_{\chi \bmod r} \int_{-X^{\varepsilon} PQ^{-1}}^{X^{\varepsilon} PQ^{-1}} \left| \sum_{n \leq X/h} \beta(hn) \chi(n) n^{it} \right| dt. \]
We have 
\[ \sideset{}{^*}\sum_{\chi \bmod r} \left( \int_{|\epsilon| \leq P/(QX)} |S^{(1)}(\epsilon;\chi)|^2 d\epsilon \right)^{1/2} \ll X^{-1/2} I^{(1)}(r) \]
by Lemma \ref{Gal}, and setting $h\epsilon=\epsilon'$, 
\[ \sideset{}{^*}\sum_{\chi \bmod r} \left( \int_{|\epsilon| \leq P/(QX)} |S^{(2)}_h(\epsilon;\chi)|^2 d\epsilon \right)^{1/2} \ll X^{-1/2} I^{(2)}_h(r). \]
Thus, by the Cauchy--Schwarz inequality, it remains to show that
\begin{equation} \label{R12} E'_2 \coloneqq R^{-1+o(1)}_1 R^{-1/2+o(1)}_2 \underset{r_1r_2 \neq 1}{\sum_{r_1 \sim R_1} \sum_{r_2 \sim R_2}} \sum_{h \ll Q/R_2} (r_1,hr_2) \tau(h)^4 I^{(1)}(r_1) I^{(2)}_h(r_2) \ll \frac{X^2}{(\ln{X})^{A+O(1)}} \end{equation}
for all $A>0$ and $Q \ll P, R_1 \ll QD, R_2 \ll Q$.

Note that for any $C>0$, Lemmas \ref{D22} and \ref{D1} give
\begin{equation} \label{I1log} I^{(1)}(r) \ll 
\begin{cases}
X (\ln{X})^{-B} & (1<r \leq (\ln{X})^{C}), \\
X (\ln{X})^{O(1)} & (r=1), \\
\end{cases}
\end{equation}
\begin{equation} \label{I2log} I^{(2)}_h(r) \ll 
\begin{cases}
Xh^{-1} (\ln{X})^{-B} & (1<r \leq (\ln{X})^{C}), \\
Xh^{-1} (\ln{X})^{O(1)} & (r=1), \\
\end{cases}
\end{equation}
for any $B>0$.

We use 
\[ (r_1,h r_2) \leq \sum_{\substack{g|r_1,h r_2 \\ g \ll \min(R_1,Q)}} g, \]
and
\[ \sum_{\substack{r_1 \sim R_1 \\ g|r_1}} I^{(1)}(r_1) \ll \left( X+ X^{11/20+\varepsilon} PQ^{-1} R^2_1 g^{-1} \right). \]
The second estimate follows by bounding the contribution of $\Lambda-\Lambda'$ trivially, and then applying Lemma \ref{D1} to the sum with $\Lambda$. 
This shows 
\begin{align*}
E'_2 &\ll R^{-1+o(1)}_1 R^{-1/2+o(1)}_2 (X\min(R_1,Q)+X^{11/20+\varepsilon} PQ^{-1} R^2_1) \\
&\times \sum_{h \ll Q/R_2} \tau(h)^5 \sum_{r_2 \sim R_2} I^{(2)}_h(r_2) 
\end{align*}
By Lemma \ref{D1}, we have
\begin{align*}
\sum_{h \ll Q/R_2} \tau(h)^5 \sum_{r_2 \sim R_2} I^{(2)}_h(r_2) &\ll \sum_{h \ll Q/R_2} \tau(h)^5 \left( \frac{X}{h}+\left(\frac{X}{h}\right)^{11/20+\varepsilon} PQ^{-1} R^2_2 \right)  \\
&\ll \left( X+X^{11/20+\varepsilon} PQ^{-1} R^2_2 (Q/R_2)^{9/20+\varepsilon} \right) (\ln{X})^{O(1)}. 
\end{align*}
Since
\[ R^{-1+o(1)}_1 (X\min(R_1,Q)+X^{11/20+\varepsilon} PQ^{-1} R^2_1) \ll X(Q^{\varepsilon/10} \min(1,Q/R_1) + X^{-9/20+2\varepsilon} PD), \]
\[ R^{-1/2+o(1)}_2 \left( X+X^{11/20+\varepsilon} PQ^{-1} R^2_2 (Q/R_2)^{9/20+\varepsilon} \right) \ll X (R^{-1/2+o(1)}_2 + X^{-9/20+2\varepsilon} P^{3/2}), \]
and $P \leq X^{9/40-\varepsilon}$, $PD \leq X^{9/20-3\varepsilon}$, we have \eqref{R12} for the cases $R_1 \geq Q^{1+\varepsilon} (\ln{X})^{10A}$ or $R_2 \geq Q^{\varepsilon} (\ln{X})^{10A}$.

Suppose that $R_1 \leq Q^{1+\varepsilon} (\ln{X})^{10A}$ and $R_2 \leq Q^{\varepsilon} (\ln{X})^{10A}$. 
We use 
\[ (r_1,h r_2) \leq (r_1,h) \sum_{\substack{g|r_1,r_2 \\ g \ll \min(R_1,R_2)}} g \] and
\[ \sum_{\substack{r_2 \sim R_2 \\ g|r_2}} I^{(2)}_h(r_2) \ll \left( \frac{X}{h}+\left(\frac{X}{h}\right)^{11/20+\varepsilon} \frac{PQ^{-1} R^2_2}{g} \right). \]
This shows 
\begin{align*}
E'_2 &\ll R^{-1+o(1)}_1 R^{-1/2+o(1)}_2 \sum_{r_1 \sim R_1} I^{(1)}(r_1) \sum_{h \ll Q/R_2} (r_1,h) \tau(h)^4 \left( \frac{X}{h} \min(R_1,R_2) +\left(\frac{X}{h}\right)^{11/20+\varepsilon} PQ^{-1} R^2_2 \right) \\
&\ll R^{-1+o(1)}_1 R^{-1/2+o(1)}_2 (X+X^{11/20+\varepsilon} PQ^{-1} R^2_1) \\
&\quad \times \left(X\min(R_1,R_2)+X^{11/20+\varepsilon} PQ^{-1} R^2_2 (Q/R_2)^{9/20+\varepsilon} \right) (\ln{X})^{O(1)}.
\end{align*}
Noting that, say,
\[ R^{-1+o(1)}_1 R^{-1/2+o(1)}_2 \min(R_1,R_2) \ll R^{-1/10}_1 R^{-1/10}_2 \]
we have \eqref{R12} if $R_1 \geq (\ln{X})^{100A}$ or $R_2 \geq (\ln{X})^{100A}$.

It remains to consider the cases where $R_1, R_2 \leq (\ln{X})^{100A}$. 
In this case, we have
\begin{align*} E'_2 &\ll (\ln{X})^{O_{A}(1)} \sup_{\substack{1<r_1 r_2 \leq (\ln{X})^{200A}}} \sum_{h \ll Q} \tau(h)^4 I^{(1)}(r_1) I^{(2)}_h(r_2),
\end{align*}
This, together with \eqref{I1log} and \eqref{I2log}, yields \eqref{R12}. 
\end{proof}

We now prove Theorem \ref{T1p}. 

\begin{proof}[Proof of Theorem \ref{T1p}]
Let $A>0$ and $\theta \in (0,1/6)$. By \eqref{Morth}, we have
\[ \sum_{d \leq X^{\theta}} \sup_{l:(d,l(N-l))=1} \left| \sum_{\substack{m+n=N \\ m \equiv l \bmod d}} \Lambda(m) \beta(n) w_1(m/X) w_2(n/X) - M_0 \right| \ll E_{\mathfrak{M}} + E_{\mathfrak{m}} + \frac{X}{(\ln{X})^A} \]
where $E_{\mathfrak{M}}=E_{\mathfrak{M}}(N)$ and $E_{\mathfrak{m}}=E_{\mathfrak{m}}(N)$ are defined by
\[ E_{\mathfrak{M}} = \sum_{d \leq X^{\theta}} \sup_{l:(d,l(N-l))=1} \left| \int_{\mathfrak{M}} S^{(1)}_{l,d}(\alpha) S^{(2)}(\alpha) e(-N\alpha) d\alpha - M_0 \right|, \]
\[ E_{\mathfrak{m}} = \sum_{d \leq X^{\theta}} \sup_{l} \left| \int_{\mathfrak{m}} S^{(1)}_{l,d}(\alpha) S^{(2)}(\alpha) e(-N\alpha) d\alpha \right|. \]
By Lemmas \ref{Mred} and \ref{E12}, we have
\[ \sum_{X/2<N\leq X} E_{\mathfrak{M}} \leq \frac{X^2}{(\ln{X})^{2A}}, \]
and by Lemma \ref{Min}
\[ \sum_{X/2<N\leq X} E^2_{\mathfrak{m}} \leq \frac{X^3}{(\ln{X})^{3A}}. \]
Since
\[ \1_{E_{\mathfrak{M}} \geq X(\ln{X})^{-A}} \leq X^{-1} (\ln{X})^{A} E_{\mathfrak{M}}, \quad \1_{E_{\mathfrak{m}} \geq X(\ln{X})^{-A}} \leq X^{-2} (\ln{X})^{2A} E^2_{\mathfrak{m}} \]
for $N \in (X/2,X]$, we see that
\begin{align*}
&\#\{ N \in (X/2,X] : E_{\mathfrak{M}} + E_{\mathfrak{m}} \geq 2X(\ln{X})^{-A} \} \\
&\leq \sum_{X/2<N \leq X} (\1_{E_{\mathfrak{M}} \geq X(\ln{X})^{-A}}+\1_{E_{\mathfrak{m}} \geq X(\ln{X})^{-A}}) \\
&\leq X^{-1} (\ln{X})^{A} \sum_{X/2<N \leq X} E_{\mathfrak{M}} + X^{-2} (\ln{X})^{2A} \sum_{X/2<N \leq X} E^2_{\mathfrak{m}} \ll \frac{X}{(\ln{X})^A}. \\
\end{align*}
This gives the first statement. 

Since the proof of the second statement is similar, we only describe the differences. 
By orthogonality, we have
\[ \sum_{\substack{2m+n=N \\ m \equiv l \bmod d}} \Lambda'(m) \beta(n) w_1(m/X) w_2(n/X) = \int_{\mathfrak{M} \cup \mathfrak{m}} S^{(1)}_{l,d}(2\alpha) S^{(2)}(\alpha) e(-N\alpha) d\alpha. \]
The contributions from the minor arc can be treated as before. Thus it suffices to evaluate
\[ M' \coloneqq \int_{\mathfrak{M}} S^{(1)}_{l,d}(2\alpha) S^{(2)}(\alpha) e(-N\alpha) d\alpha, \]
for odd $N \in (X/2,X]$. For $q\geq 1$, write $q=q^{(1)}q^{(2)}$ with $q^{(1)}=(q,2^{\infty})$ so $2 \nmid q^{(2)}$ and $(q^{(1)},q^{(2)})=1$. 
Then
\begin{align*}
\sum_{a \in \pZ{q}} \1_{m \equiv l \bmod d} e(a(2m+n-N)/q) &= \left( \sum_{a^{(1)} \in \pZ{q^{(1)}}} e(a^{(1)}(2m+n-N)/q^{(1)}) \right) \\
&\quad \times \left( \sum_{a^{(2)} \in \pZ{q^{(2)}}} \1_{m \equiv l \bmod d} e(a^{(2)}(2m+n-N)/q^{(2)}) \right) 
\end{align*}
for any $m,n$ and odd $d$, integer $l$ with $(d,l)=(d,N-2l)=1$. 
We use a decomposition similar to that in Lemma \ref{sumA}. In the sum over $a^{(2)}$, we make the change of variables $2b_1\mapsto b_1$. This gives
\begin{align*}
&\sum_{a \in \pZ{q}} e(-Na/q) S^{(1)}_{l,d}(2\epsilon+2a/q) S^{(2)}(\epsilon+a/q) \\
&= \sum_{\ell \ell^*|q^{(2)}} \frac{\mu(\ell^*)}{\varphi(q^{(1)})^2 \varphi([d,q^{(2)}]) \varphi(q^{(2)}/\ell)} \sum_{\chi^{(1)}_1 \bmod q^{(1)}} \sum_{\chi^{(1)}_2 \bmod q^{(1)}} \chi^{(1)}_2(\ell \ell^*) \tau_{-2}(\overline{\chi^{(1)}_1}) \tau(\overline{\chi^{(1)}_2}) \tau_N(\chi^{(1)}_1 \chi^{(1)}_2) \\
& \times \sum_{\chi^{(2)}_1 \bmod [d,q^{(2)}]} \sum_{\chi^{(2)}_2 \bmod q^{(2)}/\ell} \chi^{(2)}_1(2) (\chi^{(2)}_2)^*(\ell^*) \tau(\overline{\chi^{(2)}_2}) R(\chi^{(2)}_1,\chi^{(2)}_2,q^{(2)},d,2l,N) \\
&\times S^{(1)}(\epsilon;(\chi^{(1)}_1 \chi^{(2)}_1)^*) S^{(2)}_{\ell \ell^*}(\epsilon;(\chi^{(1)}_2 \chi^{(2)}_2)^*).
\end{align*}

Let $M'_1, M'_2$ be the contributions from the term with $(\chi^{(1)}_1 \chi^{(2)}_1)^*=(\chi^{(1)}_2 \chi^{(2)}_2)^*=1$, $(\chi^{(1)}_1 \chi^{(2)}_1)^*\neq 1$ or $(\chi^{(1)}_2 \chi^{(2)}_2)^*\neq 1$ to $M'$, respectively. 

We denote the conductor of $\chi^{(j)}_i$ by $r^{(j)}_{i}$. Then Lemmas \ref{R} and \ref{tau} give
\[ |\tau_{-2}(\overline{\chi^{(1)}_1}) \tau(\overline{\chi^{(1)}_2}) \tau_N(\chi^{(1)}_1 \chi^{(1)}_2) \tau(\overline{\chi^{(2)}_2}) R(\chi^{(2)}_1,\chi^{(2)}_2;q^{(2)},d,2l,N)| \ll q \tau(q) (N,q)^{1/2} (r^{(1)}_2 r^{(2)}_2)^{1/2} \ln{X} \]
Thus, $M'_2$ can be treated as before. 

If $(\chi^{(1)}_1 \chi^{(2)}_1)^*=(\chi^{(1)}_2 \chi^{(2)}_2)^*=1$, then
\[ \tau_{-2}(\overline{\chi^{(1)}_1}) \tau(\overline{\chi^{(1)}_2}) \tau_N(\chi^{(1)}_1 \chi^{(1)}_2) = c_{q^{(1)}}(2) \mu(q^{(1)}) c_{q^{(1)}}(N) \]
\[ \tau(\overline{\chi^{(2)}_2}) R(\chi^{(2)}_1,\chi^{(2)}_2;q^{(2)},d,2l,N) = \mu(q^{(2)}/\ell) \mu(q^{(2)}) c_{q^{(2)}/(d,q^{(2)})}(N) \]
Noting that $c_{q^{(1)}}(2) \mu(q^{(1)}) = \mu((q,2)) \mu(q^{(1)})^2$ and $(d,q^{(2)}) = (d,q)$, we therefore have 
\[ M'_1 = \sum_{q \leq P} \sum_{\ell \ell^*|q^{(2)}} \frac{\mu(q)^2 \mu((q,2)) \mu(\ell \ell^*) \varphi((d,q)) \varphi(\ell)}{\varphi(d) \varphi(q)^2} c_{q/(d,q)}(N) \int_{|\epsilon| \leq P/(qX)} S^{(1)}(2\epsilon) S^{(2)}_{\ell \ell^*}(\epsilon) e(-N\epsilon) d\epsilon. \]
The contributions from $\ell \ell^*>1$ can be treated as before. 
We write $S^{(1)}(2\epsilon)=(S^{(1)}(2\epsilon)-\tfrac{1}{2}T(\epsilon))+\tfrac{1}{2}T(\epsilon)$, where
\[ T(\epsilon) = \sum_{n} e(\epsilon n) w_1(n/(2X)). \]
The contributions from $S^{(1)}(2\epsilon)-\tfrac{1}{2}T(\epsilon)$ can be treated as before. 
For the rest of the argument, we use 
\[ \sum_{q} \frac{\mu(q)^2 \mu((q,2)) \varphi((d,q))}{\varphi(d) \varphi(q)^2} c_{q/(d,q)}(N) = \frac{\mathfrak{S}'(N)}{\psi_N(d)} \]
in place of Lemma \ref{sing}. This gives the second statement of Theorem \ref{T1p}. 

\end{proof}

\section{Sieve}

We use Kuhn's weighted sieve with switching as in \cite{matomaki2025weighted} to prove Theorem \ref{T2}, and Richert's weighted sieve to prove Theorem \ref{T3}.
In both cases, we use a result from the linear sieve. 
In order to simplify the proof of Theorem \ref{T2}, we borrow the following result of Bantle \cite{bantle1986dependence}, which gives a sharper error term than the standard form. 

Let $\varpi(n,z)$ denote the indicator function of $z$-rough numbers. 
\begin{lem}\label{LS}
Let $\omega^{+}, \omega^{-}: \mathbb{R}_{>0} \to \mathbb{R}_{\geq 0}$ be the functions defined by
\[ \omega^{+}(s) = \frac{2}{s}, \quad \omega^{-}(s) = 0, \quad (s \leq 2) \] 
\[ (s\omega^{\pm}(s))' = \omega^{\mp}(s-1) \quad (s>2) \]

Let $\mathcal{A}=(a_n)$ be a finitely supported non-negative sequence and
\[ S(\mathcal{A},z) = \sum_{n} a_n \varpi(n,z). \] 
Let $g$ be a multiplicative function satisfying $0 \leq g(p) < 1$ for all $p$. 
Let $x,D,z\geq 2$ and write
\[ R_d(\mathcal{A}) = \sum_{d|n} a_n - xg(d). \]
Suppose that $L \geq 2$ and
\begin{equation} \prod_{w \leq p < y} (1 - g(p))^{-1} \leq \frac{\ln{y}}{\ln{w}} \left( 1 + \frac{L}{\ln{w}} \right) \label{dim1} \end{equation}
holds for all $2 \leq w<y\leq z$. 
Let $K \geq 1$ and assume that $s=\ln{D}/\ln{z} \in [1/K,K]$. 
Then we have
\[ x (\omega^-(s) - H) e^{\gamma} V(z) - \sum_{d \leq D} \mu(d)^2 |R_d(\mathcal{A})| \leq S(\mathcal{A},z) \leq x (\omega^+(s) + H) e^{\gamma} V(z) + \sum_{d \leq D} \mu(d)^2 |R_d(\mathcal{A})|, \]
where
\[ V(z) \coloneqq \prod_{p<z} (1-g(p)), \]
and
\[ H = C_K L^5 (\ln{z})^{-1/3} \]
for some constant $C_K>0$ depending only on $K$. 
\end{lem}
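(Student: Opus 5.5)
This lemma is the linear sieve (Jurkat--Richert / Rosser--Iwaniec) with an explicit error term. The paper attributes it to Bantle \cite{bantle1986dependence}. My plan is to reduce to that framework: obtain the upper and lower bound sieve weights $\lambda^{\pm}_d$ supported on squarefree $d \leq D$, and control their main terms through the dimension condition \eqref{dim1}.

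\textbf{Step 1: the combinatorial identity.} I take the Rosser--Iwaniec $\beta$-sieve weights with $\beta=2$. These are $\lambda^{\pm}_d=\mu(d)$ when $d=p_1\cdots p_r$ with $p_1>\dots>p_r$, $p_i<z$, and $p_1\cdots p_{m-1}p_m^{3}<D$ for every $m$ of the appropriate parity; otherwise $\lambda^{\pm}_d=0$. These weights satisfy $\lambda^-\ast 1\leq \1_{n=1}\leq \lambda^+\ast 1$ on divisors of $P(z)$. Hence
\[ S(\mathcal{A},z) \lessgtr x\sum_{d}\lambda^{\pm}_d g(d) \pm \sum_{d\leq D}\mu(d)^2|R_d(\mathcal{A})|, \]
since $|\lambda^\pm_d|\leq 1$ and $d\leq D$ is squarefree. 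This gives the remainder terms exactly as stated.

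\textbf{Step 2: the main terms.} Write $\sum_d\lambda^\pm_d g(d)=V(z)\bigl(1+\sum_{n}\pm V_n\bigr)$ using the standard Buchstab-type expansion, with $V_n$ the sums over the truncated chains. Following Iwaniec's treatment (or Friedlander--Iwaniec, \emph{Opera de Cribro}, Ch.~11), I compare each $V_n$ with its continuous analogue. The comparison uses \eqref{dim1}, which gives $\sum_{w\leq p<y} g(p)f(p) = \int_w^y f(t)\,dt/\ln t + O(L/\ln w)$ uniformly for monotone $f$, by partial summation on $\ln\prod(1-g(p))^{-1}$. The continuous analogues sum to $e^{\gamma}\omega^{\pm}(s)$. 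Here $\omega^{\pm}$ solve the delay differential system stated in the lemma; these are $F,f$, normalized so that $e^\gamma\omega^+ = F$ and $e^\gamma\omega^- = f$, and they satisfy $sF(s)=2e^\gamma$ for $s\leq 3$ (in the paper's normalization, $\omega^{+}(s)=2/s$ for $s\leq 2$).

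\textbf{Step 3: the error size.} The main obstacle is obtaining the error $H=C_KL^5(\ln z)^{-1/3}$ rather than the classical $O((\ln D)^{-1/3})$ with an unspecified dependence on $L$. Iwaniec's argument produces $e^{\sqrt{L}}$-type losses. Bantle's refinement tracks the dependence on $L$ polynomially. It does so by bounding the tails $V_n$ geometrically for $n\gg \ln\ln z$, which uses $\beta=2$ and the decay of chains, and by accumulating an $O(L(\ln z)^{-1/3})$ error in each of the remaining $O(\ln\ln z)$ steps, after first perturbing $s$ by $(\ln z)^{-1/3}$. Since the paper explicitly borrows this, I would simply invoke Bantle's theorem. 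The checks needed are that the hypothesis \eqref{dim1} matches his dimension condition with constant $L$, that $s\in[1/K,K]$ gives uniformity through $C_K$, and that his $F,f$ coincide with $e^{\gamma}\omega^{\pm}$ under the stated normalization.
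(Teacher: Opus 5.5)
Your proposal ends up exactly where the paper does: the paper's proof is the one-line citation of Bantle's Theorem 1 \cite{bantle1986dependence}, and your Step 3 reduces to that same citation, with the right things to check (that \eqref{dim1} is his one-sided dimension condition with constant $L$, uniformity for $s\in[1/K,K]$, and the identification of his sieve functions with $e^{\gamma}\omega^{\pm}$). The one inaccuracy is in your expository Step 2, and it does not affect the proof by citation: \eqref{dim1} is one-sided (for instance $g\equiv 0$ satisfies it), so it cannot give a two-sided asymptotic for $\sum_{w\le p<y} g(p)f(p)$ (and the comparison measure there should be $dt/(t\ln t)$, not $dt/\ln t$); the Rosser--Iwaniec argument needs only upper bounds for the $V_n$, which enter with a fixed sign.
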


\begin{proof}
This follows from \cite[Theorem 1]{bantle1986dependence}. 
\end{proof}

By Mertens' theorem, one has
\begin{equation} e^{\gamma} V(z) = \frac{1+o(1)}{\ln{z}} \prod_{p<z} (1-g(p)) (1-1/p)^{-1} \label{V_Mer}. \end{equation}

\subsection{Primes satisfying $p_1+p_2=N, p_1-p_2+1 \in \mathbb{P}_k$}

Let $w$ be a smooth function supported on $[1/10,2/10]$ and positive on $(1/10,2/10)$. 
For arithmetic functions $f_i$, we write
\[ \Sigma(f_1,f_2,f_3) = \sum_{m+n=N} f_1(m) f_2(n) f_3(m-n+1) w(n/X). \]
Note that $m/X=(N-n)/X \in [3/10,9/10]$ and $m-n+1>0$ whenever $N \in (X/2,X]$ and $w(n/X) \neq 0$.

Let $k \geq 2$ be an integer, and let $\theta>0$. 
Let $F$ be a piecewise smooth non-negative function supported on $[0,\theta-\varepsilon]$ for some $\varepsilon>0$, and define $W$ and $W_{>k}$ by
\[ W(n) = 1-\sum_{p|n} F\left(\frac{\ln{p}}{\ln{X}}\right), \]
\[ W_{>k}(n) = 
\begin{cases}
0 & (n \in \mathbb{P}_k)  \\
\max(0,W(n)) & (n \notin \mathbb{P}_k) \\
\end{cases} \]
so that
\[ \1_{\mathbb{P}_k}(n) \geq W(n)-W_{>k}(n). \]

Let $s>2$ and put $z=X^{1/s}$. Let
\[ S_1 = \Sigma(\Lambda',\Lambda',W \varpi(\cdot,z)), \quad S_2 = \Sigma(\Lambda',\Lambda',W_{>k}\varpi(\cdot,z)). \]
The proof of Theorem \ref{T2} is reduced to showing
\[ S_1 - S_2 > 0 \]
for almost all even $N$, $k=4$ and some $s,F$.

For an even integer $N>0$, let $\mathfrak{T}(N)=4 \prod_{p>2} C_p$ with
\begin{align*}
C_p =
\begin{cases}
\left(1 - \frac{1}{(p-1)^2} \right) \left(1- \frac{1}{p} \right)^{-1} & p|N(N^2-1) \\
\left(1 - \frac{1}{(p-1)^2} \right) \left(1 - \frac{1}{p-2} \right) \left(1- \frac{1}{p} \right)^{-1} & p\nmid N(N^2-1) \\
\end{cases}
\end{align*}
Let
\[ c_w = \int_{\mathbb{R}} w(u) du. \]

\begin{lem}\label{t1S1}
Let $A>0$. If $1/s<\theta < 1/6$, then we have
\[ S_1 \geq \frac{c_w X \mathfrak{T}(N)}{\ln{z}} \left(\omega^{-}(s\theta) - \int_{1/s}^{\theta} \frac{F(u) \omega^{+}(s(\theta-u))}{u} du +o(1) \right) \]
for all even integers $N \in (X/2,X]$ with $O(X(\ln{X})^{-A})$ exceptions. 
\end{lem}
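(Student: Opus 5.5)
Write $S_1 = S_{11} - S_{12}$, where
\[ S_{11}=\Sigma(\Lambda',\Lambda',\varpi(\cdot,z)), \qquad S_{12}=\sum_{z\le p<X^{\theta-\varepsilon}} F\Big(\tfrac{\ln p}{\ln X}\Big)\,\Sigma(\Lambda',\Lambda',\1_{p\mid \cdot}\,\varpi(\cdot,z)). \]
Only primes $p \geq z$ appear in $S_{12}$, because $\varpi(\cdot,z)$ kills every $p<z$ and $F$ is supported on $[0,\theta-\varepsilon]$. Hence the $u$-integral starts at $1/s$. We bound $S_{11}$ from below and each inner sum of $S_{12}$ from above, both via Lemma \ref{LS}.

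The sifted sequence is $a_n=\sum_{m+n'=N,\,m-n'+1=n}\Lambda'(m)\Lambda'(n')w(n'/X)$. Since $m-n'+1 = 2m-N+1$, the condition $d\mid m-n'+1$ for odd squarefree $d$ becomes the single congruence $m\equiv l_d:=(N-1)\overline{2}\bmod d$. Even $d$ contribute nothing, since $m-n'+1$ is odd for odd primes $m,n'$. We need $(d,l_d(N-l_d))=1$, which amounts to $p\nmid (N-1)(N+1)$ for $p\mid d$. For primes $p\mid N^2-1$ one instead has $p\mid m$ or $p\mid n'$, so $g(p)=0$ up to $O(1)$ terms with $m=p$ or $n'=p$. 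Put $x=\mathfrak{S}(N)\sum_{m+n=N} w(n/X)$ and $g(d)=1/\psi_N(d)$ for squarefree odd $d$ coprime to $N^2-1$, and $g(d)=0$ otherwise. Theorem \ref{T1} (with $w_1$ a smooth function equal to $1$ on $[3/10,9/10]$, $w_2=w$, and $\Lambda'$ replaced by $\Lambda$ at a cost of $O(X^{1/2+\varepsilon})$ per $d$, handled by the $\tau(d)^k$ version) then gives $\sum_{d\le X^{\theta}}\mu(d)^2|R_d|\ll X(\ln X)^{-A}$ outside an exceptional set of $N$.

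For the inner sums, take $\mathcal{A}_p=(a_{pn})_n$ with $p$ squarefree-coprime and level $D_p=X^{\theta}/p$. The remainders $\sum_{p}\sum_{d\le X^\theta/p}|R_{pd}|$ are dominated by $\sum_{e\le X^\theta}\tau(e)|R_e|$, which is again covered by Theorem \ref{T1} with $k=1$. The sieve dimension condition \eqref{dim1} holds with $L\ll \ln\ln X$ uniformly in $N$: here $g(p)=1/(p-2)$ or $0$, and the $(\ln\ln X)^5(\ln z)^{-1/3}$ error is $o(1)$.

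Apply Lemma \ref{LS}. For $S_{11}$ we take $D=X^\theta$, so the sieve parameter is $s\theta$ and we obtain the lower bound $x\,\omega^-(s\theta)e^\gamma V(z)$. For each $p$ we take $D_p$, so the parameter is $\ln(X^\theta/p)/\ln z=s(\theta-u)$ with $u=\ln p/\ln X$, and we obtain the upper bound $x\,g(p)\,\omega^+(s(\theta-u))e^\gamma V(z)$. Summing over $p$ with $g(p)\sim 1/p$ and using the prime number theorem turns $\sum_p F(u)/p$ into $\int_{1/s}^\theta F(u)\omega^+(s(\theta-u))\,du/u$. The $p\mid N^2-1$ with $g(p)=0$ contribute $O(\sum_{p\mid N^2-1,\,p\ge z}1/p)=o(1)$.

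The main computational step is identifying the constants: $x\,e^\gamma V(z)$ must equal $c_wX\mathfrak{T}(N)(1+o(1))/\ln z$. Here $\sum_{m+n=N}w(n/X)=c_wX+O(1)$. By \eqref{V_Mer}, $e^\gamma V(z)=\ln z^{-1}\prod_{p<z}(1-g(p))(1-1/p)^{-1}(1+o(1))$. Combining this with $\mathfrak{S}(N)=2\prod_{p>2}(1+c_p(N)/(p-1)^2)$ and the factor from $p=2$, we check locally that for $p\mid N$ the factor is $(1+1/(p-1))(1-1/p)^{-1}\cdot$[sieve factor], and for $p\nmid N$ it is $(1-1/(p-1)^2)$ times $1-1/(p-2)$ or $1$ according as $p\nmid N^2-1$ or not. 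These match $C_p$. One also verifies convergence, i.e.\ that primes $p\ge z$ contribute a factor $1+o(1)$. The hardest part is this bookkeeping of exceptional sets across the countably many dyadic applications together with the Euler product matching; the analytic input is entirely Theorem \ref{T1}.
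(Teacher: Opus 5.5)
Your proposal is correct and follows essentially the same route as the paper: split $S_1=S_{11}-S_{12}$, apply Lemma \ref{LS} with $g(d)=1/\psi_N(d)$ on $d$ coprime to $2(N^2-1)$ at levels $X^{\theta}$ and $X^{\theta}/p$, reduce the remainders to Theorem \ref{T1} via the single congruence $2m\equiv N-1 \bmod d$, and match the local factors $A_pB_p=C_p$ to obtain $\mathfrak{T}(N)$. Two minor slips do not affect the argument: $g(p)=1/(p-1)$ for $p\mid N$, and only one application of Theorem \ref{T1} (hence one exceptional set) is needed, with no dyadic bookkeeping.
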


\begin{proof}
Discarding the contribution from prime powers trivially, we see that
\[ S_1 = \Sigma(\Lambda,\Lambda,W\varpi(\cdot,z)) + O(X^{1/2+\varepsilon}). \]

Let
\[ S_{11} = \Sigma(\Lambda,\Lambda,\varpi(\cdot,z)), \quad S_{12} = \sum_{p>z} F\left(\frac{\ln{p}}{\ln{X}}\right) \Sigma(\Lambda,\Lambda,\1_{p|\cdot} \varpi(\cdot,z)), \]
so $S_1=S_{11}-S_{12}+O(X^{1/2+\varepsilon})$. 

We define $\mathcal{A}=(a_n)$ by
\[ a_n = \Sigma(\Lambda,\Lambda,\1_{\cdot=n}). \]
Then we see that $a_n \geq 0$ and
\[ S_{11} = S(\mathcal{A},z) . \]
We apply Lemma \ref{LS} with 
\begin{align*} g(d) = 
\begin{dcases}
\frac{1}{\psi_N(d)} & (d,2(N^2-1)) = 1 \\
0 & (d,2(N^2-1)) \neq 1
\end{dcases}
\end{align*}
\[ x = Y \mathfrak{S}(N), \quad D=X^{\theta}, \]
where $Y= \sum_{n} w(n/X)$. This gives
\begin{equation} S_{11} \geq Y \mathfrak{S}(N) e^{\gamma} V(z) (\omega^{-}(s \theta)+o(1)) + O\left( \sum_{d \leq D} \left| R_d \right| \right), \label{S11p} \end{equation}
where
\[ R_d = \Sigma(\Lambda,\Lambda,\1_{d|\cdot}) - Y\mathfrak{S}(N) g(d). \]
Using \eqref{V_Mer} and 
\[ \prod_{\substack{p|N(N^2-1) \\ p>z}} \left(1 + O\left( \frac{1}{p} \right) \right) = \exp\left( O\left( \sum_{\substack{p|N(N^2-1) \\ p>z}} \frac{1}{p} \right) \right) = 1+o(1), \]
one has
\begin{align*}
e^{\gamma} V(z) &= \frac{1+o(1)}{\ln{z}} \prod_{p < z} \left(1 - \frac{\1_{p\nmid 2(N^2-1)}}{\psi_N(p)} \right) \left(1- \frac{1}{p} \right)^{-1} \\
&= \frac{2+o(1)}{\ln{z}} \prod_{p>2} A_p, \quad \text{where } A_p=
\begin{cases}
\left(1 - \frac{1}{p-1} \right) \left(1- \frac{1}{p} \right)^{-1} & p|N \\
\left(1- \frac{1}{p} \right)^{-1} & p|N^2-1 \\
\left(1 - \frac{1}{p-2} \right) \left(1- \frac{1}{p} \right)^{-1} & p\nmid N(N^2-1) \\
\end{cases}
\end{align*}
Since
\begin{align*}
\mathfrak{S}(N) &= 2 \prod_{p>2} B_p, \quad \text{where }
B_p =
\begin{cases}
\left(1 + \frac{1}{p-1} \right) & p|N \\
\left(1 - \frac{1}{(p-1)^2} \right) & p|N^2-1 \\
\left(1 - \frac{1}{(p-1)^2} \right) & p\nmid N(N^2-1) \\
\end{cases}
\end{align*}
and $C_p=A_p B_p$, we obtain
\[ S_{11} \geq \frac{Y \mathfrak{T}(N)}{\ln{z}} (\omega^{-}(s\theta)+o(1)) + O\left( \sum_{d \leq D} \left| R_d \right| \right). \]

For $S_{12}$, we apply Lemma \ref{LS} with
\[ \mathcal{A}_p = (\Sigma(\Lambda,\Lambda,\1_{\cdot=pn}))_n, \quad x = \frac{Y \mathfrak{S}(N)}{\psi_N(p)}, \quad D=X^{\theta}/p, \]
for each $p$. 
This and $g(p) = 1/p + O(1/p^2 + \1_{p|2(N^2-1)}/p)$ give
\[ S_{12} \leq \sum_{z<p} \frac{Y \mathfrak{T}(N)}{p\ln{z}} \left( F\left(\frac{\ln{p}}{\ln{X}}\right) \omega^{+}\left(\frac{\ln{X^{\theta}/p}}{\ln{z}}\right) +o(1) \right) + O\left( \frac{X^{1+\varepsilon}}{z} + \sum_{p>z} \sum_{d \leq X^{\theta}/p} \left| R_{dp} \right| \right) \]
Since
\[ \sum_{z<p} \frac{1}{p} F\left(\frac{\ln{p}}{\ln{X}}\right) \omega^{+}\left(\frac{\ln{X^{\theta}/p}}{\ln{z}}\right) = \int_{1/s}^{\theta} \frac{F(u) \omega^{+}(s(\theta-u))}{u} du + o(1), \]
and $\sum_{\substack{p|d' \\ p>z}} 1 = O(1)$, combining this with \eqref{S11p} gives
\[ S_1 \geq \frac{Y \mathfrak{T}(N)}{\ln{z}} \left(\omega^{-}(s\theta) - \int_{1/s}^{\theta} \frac{F(u) \omega^{+}(s(\theta-u))}{u} du +o(1) \right) + O\left( \sum_{d \leq X^{\theta}} \left| R_{d} \right| \right). \]
Note that $Y = c_{w}X(1+o(1))$. Thus, it remains to show that
\begin{equation} \sum_{d \leq X^{\theta}} \left| R_{d} \right| = o\left( \frac{X}{\ln{X}} \right), \label{Rd1} \end{equation}
for almost all $N$.

If $m+n=N$ with $2 \nmid m,n$ and $d|m-n+1$, then we see that $2\nmid d$ and $2m \equiv m+n-1 \equiv N-1 \bmod d$. Furthermore, if $m$ and $n$ are prime $\sim X$, and $d \leq X^{\theta}$, we have $(d,m)=(d,N-1)=1$ and $(d,n)=(d,N+1)=1$. 
Thus,
\[ |R_d| \ll\sup_{l:(d,l(N-l))=1} \left| \sum_{\substack{m+n=N \\ m \equiv l \bmod d}} \Lambda(m) \Lambda(n) w(n/X) - \frac{\mathfrak{S}(N) Y}{\psi_N(d)} \right| + X^{1/2+\varepsilon}. \]
Let $w_1$ be a smooth function supported on $[1/20,19/20]$ and $w_1(u)=1$ for $u \in [1/10,9/10]$. Then $w_1(m/X) w(n/X) = w(n/X)$ for all $m,n$ with $m+n \in (X/2,X]$. 
Therefore, \eqref{Rd1} follows from Theorem \ref{T1}.

\end{proof}

We define the function $G:\bigsqcup_{r=1}^{\infty} [0,1]^r \to \mathbb{R}_{\geq 0}$ by
\[ G(u_1,\ldots,u_r) = \max(0,1-F(u_1)-\cdots-F(u_r)). \]

\begin{lem}\label{t1S2}
Let $A>0$. If $\theta < 1/6$, then we have
\[ S_2 \leq \frac{c_w X \mathfrak{T}(N)}{\ln{z}} \left( \frac{2}{\theta} \sum_{r>k} \frac{1}{r!} \int_{\substack{u_i>1/s \\ u_1+\cdots+u_r=1}} \frac{G(u_1,\ldots,u_r)}{u_1 \cdots u_r} du +o(1) \right) \]
for all even integers $N \in (X/2,X]$ with $O(X(\ln{X})^{-A})$ exceptions. 
Here, the integration over $u_1+\cdots+u_r=1$ is understood as the integral over $u_1 + \cdots + u_{r-1} \leq 1$ with $u_r=1-u_1-\cdots-u_{r-1}$. 
\end{lem}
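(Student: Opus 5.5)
The plan is to bound $S_2$ by \emph{switching}: give up the primality of one of the two Goldbach primes, and instead sieve that variable while keeping the almost-prime $m-n+1$ as a structured weight. From $m+n=N$ and $b\coloneqq m-n+1$ we get $2n+b=N+1$, which is exactly the second configuration of Theorem \ref{T1p}, with $n$ the prime and $b$ the $\beta$-variable.

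\textbf{Step 1: decompose the $b$-variable.} If $W_{>k}(b)\varpi(b,z)\neq 0$, then $b=p_1\cdots p_r$ with $r>k$, all $p_i>z=X^{1/s}$, so $r<s$ and only finitely many $r$ occur. Prime powers contribute $O(X^{1-\eta})$ and are discarded. Then $W_{>k}(b)=G(u_1,\ldots,u_r)$ with $u_i=\ln p_i/\ln X$. I will split each $p_i$ into short intervals $(N_i,N_i(1+\delta)]$ with $N_i>z\ge X^\eta$. On each box $G$ is essentially constant, and the total error as $\delta\to0$ is $o(X/\ln X)$. This writes the $G$-weighted sum as a combination of functions $\beta$ of exactly the shape in Theorem \ref{T1p}, after converting $\Lambda(p_i)$ to $\ln p_i$ and dividing by $\prod\ln p_i\approx\prod u_i(\ln X)^r$. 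Boxes related by permutation are identified via the factor $1/r!$.

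\textbf{Step 2: sieve the other prime.} I will drop the prime weight on $m=N-n$ in favour of an upper bound. I use $\Lambda'(m)\le (\ln X)\varpi(m,X^{\theta}{}')$ with the sieve parameter at $z'=D=X^{\theta-\varepsilon}$, so that $\ln D/\ln z'=1$ and $\omega^+(1)=2$. I then apply the upper bound of Lemma \ref{LS} to the sequence
\[ a_j=\sum_{2n+b=N+1,\ N-n=j}\Lambda(n)\beta(b)w(n/X). \]
The condition $d\mid N-n$ is a congruence $n\equiv N \bmod d$ for odd $d$. The coprimality requirement $(d,l(N+1-2l))=1$ with $l=N$ becomes $(d,N(N-1))=1$, and the remaining $d$ give negligible or zero contribution, as in the proof of Lemma \ref{t1S1}. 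Summing the remainders over $d\le X^{\theta}$ with the second statement of Theorem \ref{T1p} (for $N+1$ odd, i.e.\ $N$ even, outside an exceptional set of size $O(X(\ln X)^{-A})$ for each of the $O(1)$ boxes) gives $o(X/\ln X)$.

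\textbf{Step 3: main term.} The main term is
\[ (\ln X)\cdot 2\,e^{\gamma}V(D)\cdot\frac{\mathfrak{S}'(N+1)}{2}\sum_{2n+b=N+1}\beta(b)w(n/X). \]
By \eqref{V_Mer}, $e^\gamma V(D)\approx(\theta\ln X)^{-1}\times$ a local product, which yields the factor $2/\theta$. Summing $\beta$ over the boxes with the prime number theorem gives $c_wX$ times $\sum_r\frac1{r!}\int G/(u_1\cdots u_r)$ over $u_1+\cdots+u_r=1$, $u_i>1/s$, divided by $\ln X$. The normalization $1/\ln z$ in the statement is then matched by the factor $s$ arising when one writes $1/\ln X=(1/s)/\ln z$ consistently with Lemma \ref{t1S1}; I would recheck this bookkeeping carefully.

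\textbf{Main obstacle.} I expect the main difficulty to be the local-factor computation: showing that $\mathfrak{S}'(N+1)$ times the sieve product for $g(p)=1/\psi_{N+1}(p)$ (restricted to $p\nmid N(N-1)$, with $p=2$ handled separately) equals $\mathfrak{T}(N)$ up to $1+o(1)$. I would do this prime by prime as in Lemma \ref{t1S1}, separating the cases $p\mid N$, $p\mid N^2-1$ and $p\nmid N(N^2-1)$, and checking that the resulting $A_pB_p$ reproduce $C_p$. A secondary point is that Lemma \ref{LS} needs its dimension condition \eqref{dim1} uniformly in $N$; this follows from $g(p)\le 1/(p-2)$ and the standard treatment of $p\mid N(N^2-1)$.
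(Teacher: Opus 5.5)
Your proposal is correct and is essentially the paper's proof. It uses the same switch to $2n+b=N+1$, the same decomposition of the almost-prime variable into boxes weighted by the supremum of $G/(u_1\cdots u_r)$, an upper-bound linear sieve on $m$ via Lemma \ref{LS} with remainders controlled by the second part of Theorem \ref{T1p}, and the identity $A'_pB'_p=C_p$ for the local factors.

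Your variations are harmless:
\begin{itemize}
\item Sieving at $z'=D$ with $\omega^{+}(1)=2$, instead of the paper's $z=X^{1/2}$ with $\omega^{+}(2\theta)=1/\theta$, gives the same main term $2/\ln D$. Take $D=X^{\theta}$ rather than $X^{\theta-\varepsilon}$ to get exactly $2/\theta$.
\item On the bookkeeping you flagged: the argument actually yields the prefactor $c_wX\mathfrak{T}(N)/\ln X$. This is what $M_k$ uses, and it is stronger than the stated $1/\ln z$ form, so no factor $s$ needs to be produced.
\item Boxes of ratio $1+\delta$ number $(\ln X)^{O(1)}$ rather than $O(1)$. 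This does no harm, since $A$ is arbitrary.
\end{itemize}
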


\begin{proof}

If $n>1$ is not squarefree and $\varpi(n,z)=1$, then exists $p>z$ with $p^2|n$. 
Thus
\[ \Sigma(\Lambda',\Lambda',W_{>k} \varpi(\cdot,z) \1_{\cdot \text{ is not squarefree}}) \ll (\ln{X})^2 \sum_{p>z} \Sigma(1,1,\1_{p^2|\cdot}) = o\left( \frac{X}{\ln{z}} \right). \]
Therefore, we see that
\[ S_2 = \sum_{r>k} S_{2,r} + o\left( \frac{X}{\ln{z}} \right), \]
where $S_{2,r} = \Sigma(\Lambda',\Lambda',W_{>k} \varpi(\cdot,z) \1_{\cdot \text{ is product of $r$ distinct primes}})$. 
Note that this sum is effectively finite, since the summand vanishes for $r>s$.

Let $r>k$ and $M>s$. We divide
\[ (0,1]^r = \bigsqcup_{1 \leq j_1,\ldots,j_r \leq M} I_{j_1} \times \cdots \times I_{j_r}, \quad I_{j} = ((j-1)/M, j/M]. \]
For each $1 < j_1,\ldots,j_r \leq M$, we define
\[ G_{j_1,\ldots,j_r} = \max_{u_i \in I_{j_i} (1\leq i \leq r)} \frac{\max(0,1-F(u_1)-\cdots-F(u_r)) \1_{u_i>1/s\forall i}}{u_1 \cdots u_r} \]
so
\[ W_{>k}(p_1 \cdots p_r) \varpi(p_1 \cdots p_r,z) \leq \sum_{1<j_1,\ldots,j_r \leq M} G_{j_1,\ldots,j_r} \prod_{i} \left( \frac{\ln{p_i}}{\ln{X}} \right) \1_{p_i \in X^{I_{j_i}}} \]
for all distinct primes $p_1, \ldots, p_r$ with $p_1 \cdots p_r \leq X$, where we write $X^{I_j}=\{n \in \mathbb{N}:\ln{n}/\ln{X} \in I_{j} \}$. Noting $\Lambda'(\cdot) \leq \Lambda(\cdot)$, we have
\begin{equation} S_{2,r} \leq \frac{1}{r!} \sum_{1<j_1,\ldots,j_r \leq M} G_{j_1,\ldots,j_r} \Sigma(\Lambda',\Lambda,\beta_{j_1,\ldots,j_r}) (\ln{X})^{-r} \label{S2rp} \end{equation}
where we denote
\[ \beta_{j_1,\ldots,j_r}(n) = \sum_{\substack{n_1 \cdots n_r=n \\ n_i \in X^{I_{j_i}} \forall i}} \Lambda(n_1) \cdots \Lambda(n_r). \]

We now fix $j_1,\ldots,j_r>1$ and define the sequence $\mathcal{A}=(a_n)_n$ by 
\[ a_n = \Sigma(\1_{\cdot=n},\Lambda,\beta_{j_1,\ldots,j_r}). \]
Then we see that
\[ \Sigma(\Lambda',\Lambda,\beta_{j_1,\ldots,j_r}) \leq S(\mathcal{A},X^{1/2}) \ln{X}. \]

We apply Lemma \ref{LS} with 
\begin{align*} g(d) = 
\begin{dcases}
\frac{1}{\psi_{N+1}(d)} & (d,N(N-1)) = 1, \\
0 & (d,N(N-1)) \neq 1,
\end{dcases}
\end{align*}
\[ x = Y_{j_1,\ldots,j_r} \mathfrak{S}'(N+1), \quad D=X^{\theta}, \]
where
\[ Y_{j_1,\ldots,j_r} = \frac{1}{2} \sum_{m+2n=N+1} \beta_{j_1,\ldots,j_r}(m) w(n/X). \]
This gives
\[ S(\mathcal{A},X^{1/2}) \leq Y_{j_1,\ldots,j_r} \mathfrak{S}'(N+1) e^{\gamma} V(X^{1/2}) (\omega^{+}(2\theta)+o(1)) + O\left( \sum_{d \leq D} \left| R_{d,j_1,\ldots,j_r} \right| \right), \]
where
\[ R_{d,j_1,\ldots,j_r} = \Sigma(\1_{d|\cdot},\Lambda,\beta_{j_1,\ldots,j_r}) - Y_{j_1,\ldots,j_r}\mathfrak{S}'(N+1) g(d). \]
Arguing as before, we have
\begin{align*}
e^{\gamma} V(X^{1/2}) &= \frac{2+o(1)}{\ln{X^{1/2}}} \prod_{2<p < X^{1/2}} \left(1 - \frac{\1_{p\nmid N(N-1)}}{\psi_{N+1}(p)} \right) \left(1- \frac{1}{p} \right)^{-1}, \\
&= \frac{2+o(1)}{\ln{X^{1/2}}} \prod_{p>2} A'_p, \quad \text{where } A'_p=
\begin{cases}
\left(1 - \frac{1}{p-1} \right) \left(1- \frac{1}{p} \right)^{-1} & p|N+1, \\
\left(1- \frac{1}{p} \right)^{-1} & p|N(N-1), \\
\left(1 - \frac{1}{p-2} \right) \left(1- \frac{1}{p} \right)^{-1} & p\nmid N(N^2-1). \\
\end{cases}
\end{align*}
Noting that
\begin{align*}
\mathfrak{S}'(N+1) &= 2 \prod_{p>2} B'_p, \quad \text{where }
B'_p =
\begin{cases}
\left(1 + \frac{1}{p-1} \right) & p|N+1, \\
\left(1 - \frac{1}{(p-1)^2} \right) & p|N(N-1), \\
\left(1 - \frac{1}{(p-1)^2} \right) & p\nmid N(N^2-1), \\
\end{cases}
\end{align*}
and $A'_p B'_p = C_p$, we therefore have 
\[ \mathfrak{S}'(N+1) e^{\gamma} V(X^{1/2}) (\omega^{+}(2\theta)+o(1)) = \frac{\mathfrak{T}(N)}{\ln{X}} (2/\theta+o(1)). \]

Inserting these into \eqref{S2rp}, we obtain
\begin{align}
S_{2,r} &\leq (2/\theta+o(1)) \sum_{1<j_1,\ldots,j_r \leq M} G_{j_1,\ldots,j_r} Y_{j_1,\ldots,j_r} (\ln{X})^{-r} \mathfrak{T}(N) \notag \\
&+ O\left( \sum_{1<j_1,\ldots,j_r \leq M} \sum_{d \leq X^{\theta}} |R_{d,j_1,\ldots,j_r}| \right). \label{S2r} 
\end{align}

We write $w^*_N(u)=w(((N+1)/X - u)/2)$ so that
\[ Y_{j_1,\ldots,j_r} = \frac{1}{2} \sum_{m} \beta_{j_1,\ldots,j_r}(m) w^*_N(m/X). \]
Thus, by the prime number theorem, we have
\[ Y_{j_1,\ldots,j_r} = \frac{1}{2} \sum_{\substack{n_1, \ldots, n_r \\ n_i \in X^{I_{j_i}}}} w^*_N(n_1 \cdots n_r/X) + O\left( \frac{X}{(\ln{X})^{r+100}} \right). \]
For each $n_i \in \mathbb{N}$, we have
\[ w^*_N(n_1 \cdots n_r/X) = \int \cdots \int_{y_i \in [n_i,n_i+1]} w^*_N(y_1 \cdots y_r/X) dy_1 \cdots dy_r + O\left( \frac{1}{n_1} + \cdots + \frac{1}{n_r} \right). \]
Thus,
\[ \sum_{\substack{n_1, \ldots, n_r \\ n_i \in X^{I_{j_i}}}} w^*_N(n_1 \cdots n_r/X) = \int \cdots \int_{y_i \in X^{I_{j_i}}} w^*_N(y_1 \cdots y_r/X) dy_1 \cdots dy_r + O(X^{1-1/M}). \]
Changing variables first to $v=y_1 \cdots y_r/X$ and then to $u_i=\ln{y_i}/\ln{X}$, we obtain
\begin{align*}
&= X \int_{\mathbb{R}} w^*_N(v) \left( \int_{\substack{y_i \in X^{I_{j_i}} \\ vX/y_1 \cdots y_{r-1} \in X^{I_{j_r}}}} \frac{dy_1 \cdots dy_{r-1}}{y_1 \cdots y_{r-1}} \right) dv \\
&= X (\ln{X})^{r-1} \int_{\mathbb{R}} w^*_N(v) \left( \int_{\substack{u_i \in I_{j_i} \\ 1-(u_1+\cdots+u_{r-1}) + \ln{v}/\ln{X} \in I_{j_r}}} du_1 \cdots du_{r-1} \right) dv \\
\end{align*}
Since
\[ \int_{\substack{u_i \in I_{j_i} \\ 1-(u_1+\cdots+u_{r-1}) + \ln{v}/\ln{X} \in I_{j_r}}} du_1 \cdots du_{r-1} = \int_{\substack{u_i \in I_{j_i} \\ u_1+\cdots+u_r=1}} du + o(1), \quad v \in [1/100,99/100], \]
and $\int_{\mathbb{R}} w^*_N(u) du = 2c_w$, we obtain
\[ Y_{j_1,\ldots,j_r} = c_w X (\ln{X})^{r-1} \left( \int_{\substack{u_i \in I_{j_i} \\ u_1+\cdots+u_r=1}} du + o(1) \right), \]
and thus
\begin{align*}
&\sum_{1<j_1,\ldots,j_r \leq M} G_{j_1,\ldots,j_r} Y_{j_1,\ldots,j_r} (\ln{X})^{-r} \\
&= c_w X (\ln{X})^{-1} \left( \int_{\substack{u_i \in [0,1] \\ u_1+\cdots+u_r=1}} \sum_{1<j_1,\ldots,j_r \leq M} G_{j_1,\ldots,j_r} \1_{u_i \in I_{j_i}} du + o(1) \right). 
\end{align*}
Since $F$ is piecewise smooth, we see that
\[ \left| \int_{\substack{u_i \in [0,1] \\ u_1+\cdots+u_r=1}} \sum_{1<j_1,\ldots,j_r \leq M} G_{j_1,\ldots,j_r} \1_{u_i \in I_{j_i}} du - \int_{\substack{u_i>1/s \\ u_1+\cdots+u_r=1}} \frac{G(u_1,\ldots,u_r)}{u_1 \cdots u_r} du \right| < \varepsilon \]
for any $\varepsilon>0$, by taking $M$ sufficiently large. 

It remains to treat the error term in \eqref{S2r}. 
Letting $m'=m-n+1$, we have
\begin{align*}
\Sigma(\1_{d|\cdot}, \Lambda, \beta_{j_1,\ldots,j_r}) &= \sum_{\substack{m+n=N \\ d|m}} \Lambda(n) \beta_{j_1,\ldots,j_r}(m-n+1) w(n/X), \\
&= \sum_{\substack{2n+m'=N+1 \\ n \equiv N \bmod d}} \Lambda(n) \beta_{j_1,\ldots,j_r}(m') w(n/X). 
\end{align*}
Arguing as before, we see that 
\begin{align*}
|R_{d,j_1,\ldots,j_r}| &\ll \sup_{l:(d,l(N+1-2l))=1} \left| \sum_{\substack{2n+m'=N+1 \\ n \equiv l \bmod d}} \Lambda(n) \beta_{j_1,\ldots,j_r}(m') w(n/X) - \frac{\mathfrak{S}'(N+1)}{\psi_{N+1}(d)} Y_{j_1,\ldots,j_r} \right|, \\
&+ \ln{X} \sum_{\substack{m' \leq X \\ m' \equiv 1-N \bmod d \\ (d,m')>1}} \beta_{j_1,\ldots,j_r}(m') + X^{1/2+\varepsilon}. 
\end{align*}
For each $d \leq X$, we have
\[ \sum_{N \leq X} \sum_{\substack{m' \leq X \\ m' \equiv 1-N \bmod d \\ (d,m')>1}} \beta_{j_1,\ldots,j_r}(m') \ll \frac{X}{d} \sum_{\substack{m' \leq X \\ (d,m')>1}} \beta_{j_1,\ldots,j_r}(m'), \]
and
\begin{align*}
\sum_{\substack{m' \leq X \\ (d,m')>1}} \beta_{j_1,\ldots,j_r}(m') &\ll \sum_{1 \leq j \leq r} \sum_{\substack{n_i>X^{1/M} \\ n_1 \cdots n_r \leq X \\ (d,n_j)>1}} \Lambda(n_1) \cdots \Lambda(n_r) \\
&\ll (\ln{X})^{r-1} \sum_{n_2 \cdots n_r \leq X^{1-1/M}} \sum_{\substack{X^{1/M} < n_1 \leq X/(n_2 \cdots n_r) \\ (d,n_1)>1}} \Lambda(n_1) \ll \tau(d) X^{1-1/M} (\ln{X})^{O_r(1)}. 
\end{align*}
This and Theorem \ref{T1p} shows that the error term in \eqref{S2r} is small for almost all $N$. 
\end{proof}

For a function $F:[0,1] \to \mathbb{R}_{\geq 0}$ and $s,\theta>0$, we write
\begin{align*}
M_k(F,\theta,s) &= s \omega^-(s\theta) - s \int_{1/s \leq u \leq \theta} \frac{F(u) \omega^+(s(\theta-u))}{u} du \\
& - \frac{2}{\theta} \sum_{r=k+1}^{\infty} \frac{1}{r!} \int \cdots \int_{\substack{1/s \leq u_i \leq 1 \\ u_1 + \cdots + u_r = 1}} \frac{G(u_1, \ldots, u_r)}{u_1 \cdots u_r} du
\end{align*}

By Lemmas \ref{t1S1} and \ref{t1S2}, we have the following proposition. 
\begin{prop}\label{propt1}
Let $k \geq 2$, let $\theta<1/6$ and $s>1/\theta$, and let $F$ be a piecewise smooth non-negative function supported on $[0,\theta-\varepsilon]$ for some $\varepsilon>0$. Assume that $M_k(F,\theta,s)>0$.
Then, for all $A>0$, $X \geq 2$ and even integers $N \in (X/2,X]$ with $O(X(\ln{X})^{-A})$ exceptions, there exist primes $p_1,p_2$ with $p_1+p_2=N$ and $p_1-p_2+1 \in \mathbb{P}_k$. 
\end{prop}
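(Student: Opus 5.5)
The plan is to combine Lemmas \ref{t1S1} and \ref{t1S2} and then check that positivity of $S_1-S_2$ forces a representation of the required type. First, since $F$ is supported on $[0,\theta-\varepsilon]$ and $\theta<1/6$, both lemmas apply with the same $\theta$. The hypothesis $s>1/\theta$ gives $1/s<\theta$, which is what Lemma \ref{t1S1} needs. Fix $A>0$. Take the union of the two exceptional sets; it still has size $O(X(\ln X)^{-A})$. For every even $N\in(X/2,X]$ outside this union, subtracting the bound of Lemma \ref{t1S2} from that of Lemma \ref{t1S1} gives
\[ S_1-S_2 \geq \frac{c_w X\mathfrak{T}(N)}{\ln z}\left(\frac{1}{s}M_k(F,\theta,s)+o(1)\right). \]
Here I use $\ln z=\ln X/s$. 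After multiplying through by $s$, the bracket in Lemma \ref{t1S1} combined with the term from Lemma \ref{t1S2} is exactly $M_k(F,\theta,s)/s$. The integration region in $M_k$ uses $1/s\le u_i\le 1$, and $G\ge0$, so the two conventions for the region agree up to a null set.

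Next I would bound $\mathfrak{T}(N)$ from below uniformly. Every factor $C_p$ with $p>3$ is positive, and $\prod_p C_p$ converges: for $p\nmid N(N^2-1)$ we have $C_p=1+O(p^{-2})$, while for $p\mid N(N^2-1)$ the factor $C_p=(1-\frac1{(p-1)^2})(1-\frac1p)^{-1}\ge 1-O(p^{-2})$ is at least as large. So $\mathfrak{T}(N)\gg1$ for even $N$, once $p=3$ is checked: if $3\nmid N(N^2-1)$ the extra factor $1-1/(p-2)$ vanishes, but $N(N^2-1)$ is always divisible by $3$, so this case never occurs. The $o(1)$ terms are uniform in $N$ outside the exceptional set, so $M_k>0$ gives $S_1-S_2\gg X/\ln X>0$ for $X$ large. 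Small $X$ are absorbed into the $O$-constant.

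Finally I would unpack $S_1-S_2=\Sigma(\Lambda',\Lambda',(W-W_{>k})\varpi(\cdot,z))$. The summand is nonzero only when $m,n$ are primes $p_1,p_2$ with $p_1+p_2=N$ and $W(p_1-p_2+1)-W_{>k}(p_1-p_2+1)\ne0$. If $n'=p_1-p_2+1\notin\mathbb{P}_k$, then $W-W_{>k}=\min(W,0)\le0$ at $n'$. Hence a positive sum forces at least one term with $n'\in\mathbb{P}_k$; this is just the inequality $\1_{\mathbb{P}_k}\ge W-W_{>k}$. We also need $n'>0$, which is noted before Lemma \ref{t1S1}. I expect no real obstacle. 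The only point needing care is the uniform lower bound $\mathfrak{T}(N)\gg1$, in particular the behaviour of the factor at $p=3$ and the uniformity of the $o(1)$ terms.
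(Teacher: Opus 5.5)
Your proposal is correct and is the paper's argument: the paper derives the proposition directly from Lemmas \ref{t1S1} and \ref{t1S2} together with $\1_{\mathbb{P}_k}\ge W-W_{>k}$, and your additional checks ($1/s<\theta$, the uniform bound $\mathfrak{T}(N)\gg1$ using $3\mid N(N^2-1)$, and positivity of $p_1-p_2+1$) are all sound. One caveat: identifying the combined bracket with $M_k(F,\theta,s)/s$ requires the prefactor in Lemma \ref{t1S2} to be $c_wX\mathfrak{T}(N)/\ln X$ rather than the printed $c_wX\mathfrak{T}(N)/\ln z$. That is what its proof actually gives, and what the definition of $M_k$ and the numerics for Theorem \ref{T2} assume; with the printed normalization the $r$-sum would carry an extra factor $s$, so you should state this correction explicitly.
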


We now prove Theorem \ref{T2}. 

\begin{proof}[Proof of Theorem \ref{T2}]
By Proposition \ref{propt1}, it suffices to find some $\theta<1/6$, $s>2$ and a piecewise smooth function $F$ supported on $[0,\theta)$ such that $M_4(F,\theta,s)>0$.

Let $\theta=0.1635$ and $t= 9.5< s= 33.5$. Define $F$ by
\[ F(u) = \frac{\1_{u \leq 1/t}}{2}. \]
Then
\begin{equation} G(u_1,\ldots,u_r) = \1_{1/t < u_1 \leq \ldots \leq u_r} + \frac{1}{2} \1_{u_1 \leq 1/t < u_2 \leq \ldots \leq u_r} \label{Ghalf} \end{equation}
for $1/s \leq u_1 \leq \cdots \leq u_r \leq 1$ with $u_1+\cdots+u_r=1$. 

By numerical calculation, one has
\[ \omega^-(s\theta) \in[0.56118,\ 0.56121]. \]

\[ \int_{1/s \leq u \leq \theta} \frac{F(u) \omega^+(s(\theta-u))}{u} du \in[0.40941,\ 0.40943]. \]

Define $T_r$ by
\[ T_r = \int \cdots \int_{\substack{1/s \leq u_1 \leq \cdots \leq u_r \leq 1 \\ u_1 + \cdots + u_r = 1}} \frac{G(u_1, \ldots, u_r)}{u_1 \cdots u_r} du. \]
Then we see that
\[ T_5 \in [0.3687,\ 0.3689] \]
\[ T_6 \in [0.04192,\ 0.04193] \]
\[ T_7 \in [0.001818,\ 0.001819] \]
\[ T_8 \in [1.913{\cdot}10^{-5},\ 1.914{\cdot}10^{-5}] \]
\[ T_9 \in [1.327{\cdot}10^{-8},\ 1.329{\cdot}10^{-8}] \]
\[ T_{10} \in [1.017{\cdot}10^{-16},\ 1.023{\cdot}10^{-16}] \]
and $T_{r}=0$ for $r>10$ since $1/s + (r-1)/t>1$. 
Therefore, 
\[ M_4(F,\theta,s) \in [0.0373,\ 0.0385]. \]

\end{proof}

\subsection{Primes with $p_1+p_2=N, 2p_1 p_2+1 \in \mathbb{P}_k$}

We write
\[ \Sigma(f_1,f_2,f_3) = \sum_{m+n=N} f_1(m) f_2(n) f_3(2mn+1) w(n/X) \]

Let $k \geq 2$ be an integer.
Let $F$ be a piecewise smooth non-negative function supported on $[0,\theta/2-\varepsilon]$ for some $\varepsilon>0$, and define $W$ and $W_{>k}$ by
\[ W(n) = 1-\sum_{p|n} F\left(\frac{\ln{p}}{\ln{X^2}}\right), \]
\[ W_{>k}(n) = 
\begin{cases}
0 & (n \in \mathbb{P}_k)  \\
\max(0,W(n)) & (n \notin \mathbb{P}_k) \\
\end{cases} \]
Let $z=(X^{2})^{1/s}, s>2$ and let
\[ S_1 = \Sigma(\Lambda',\Lambda',W \varpi(\cdot,z)), \quad S_2 = \Sigma(\Lambda',\Lambda',W_{>k}\varpi(\cdot,z)). \]

The proof of Theorem \ref{T3} is reduced to showing
\[ S_1 - S_2 > 0 \]
for almost all $6|N$, $k=13$ and some $s,F$.

\begin{lem}\label{Ldim1}
For all but $O(X e^{-(\ln{X})^{1/100}})$ integers $N \in (X/2,X]$, the following holds. 

For any $2<w<y \leq X^{1/4}$, we have
\begin{equation} \sum_{w \leq p < y} \frac{1}{p} \left( \frac{2+N^2}{p} \right) \leq \ln{\left( 1+ \frac{(\ln{X})^{1/20}}{\ln{w}} \right)}. \label{Ldim1eq} \end{equation}
Here, $\left( \frac{\cdot}{p} \right)$ denotes the Legendre symbol modulo $p$.
\end{lem}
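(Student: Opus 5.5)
Fix a threshold $z_0=\exp((\ln X)^{1/20})$ and treat small primes and large primes in the sum on the left of \eqref{Ldim1eq} differently. For $w\ge z_0$ it is enough to show that, for almost all $N$, every partial sum $\sum_{w\le p<y}\frac1p\left(\frac{2+N^2}{p}\right)$ with $z_0\le w<y\le X^{1/4}$ is $O((\ln X)^{-1})$. Indeed the right-hand side of \eqref{Ldim1eq} is $\gg (\ln X)^{1/20}/\ln w\ge (\ln X)^{-19/20}$ when $\ln w\le \ln X$. For $w<z_0$ we split the range at $z_0$. On $[w,\min(y,z_0))$ we use the trivial bound $\sum_{w\le p<y}1/p=\ln(\ln y/\ln w)+O(1/\ln w)$. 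This bound alone is not adequate, since $\ln(\ln z_0/\ln w)$ can exceed $\ln(1+(\ln X)^{1/20}/\ln w)$ by a bounded amount. However, since $\ln z_0=(\ln X)^{1/20}$, we have $\ln(\ln z_0/\ln w)\le \ln(1+(\ln X)^{1/20}/\ln w)$, and the $O(1/\ln w)$ error can be absorbed by shifting $z_0$ to $z_0'=\exp((\ln X)^{1/20}/C)$. Then $\ln(\ln z_0'/\ln w)+C'/\ln w\le \ln(1+(\ln X)^{1/20}/\ln w)$ for large $X$ and all $w\ge 2$, except in the regime of tiny $w$ where $\ln w\asymp 1$. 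There we additionally use the slack on the right-hand side, which is $\ge \ln((\ln X)^{1/20}/\ln w)$, compared with $\ln\ln z_0'$ minus a constant. The precise bookkeeping follows Mertens' theorem with its explicit error, and the tail $[z_0',y)$ contributes $O(1/\ln X)$, which is absorbed as well.

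The main step is the large-prime bound. For fixed $p$ odd, $N\mapsto \left(\frac{2+N^2}{p}\right)$ is periodic mod $p$. Its complete sum is $\sum_{a\bmod p}\left(\frac{a^2+2}{p}\right)=-1$ for $p\nmid 2$, a standard quadratic character sum. Hence the average over $N\in(X/2,X]$ is $O(X/p+p)$, which is small. I will bound the second moment
\[
\sum_{X/2<N\le X}\Big|\sum_{P<p\le 2P}\frac1p\Big(\frac{2+N^2}{p}\Big)\Big|^{2}
\]
for dyadic $P\in[z_0',X^{1/4}]$. Expanding the square, the off-diagonal terms $p_1\ne p_2$ involve $\sum_N\left(\frac{N^2+2}{p_1p_2}\right)$. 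Splitting $N$ into residues mod $p_1p_2\le X^{1/2}$ and using the Chinese remainder theorem with the complete sums $-1$ gives $X/(p_1p_2)+O(p_1p_2)$. The diagonal gives $X\sum 1/p^2\ll X/P$. So the second moment is $\ll X/P + X^{1/2}$. By Chebyshev, the number of $N$ for which the dyadic block exceeds $P^{-1/4}$ is $\ll X P^{-1/2}$, which is $\ll X e^{-(\ln X)^{1/20}/3}$ for $P\ge z_0'$. Summing over $O(\ln X)$ dyadic blocks, the exceptional set is acceptable, $O(Xe^{-(\ln X)^{1/100}})$.

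To handle arbitrary endpoints $w,y$ rather than dyadic ones, I will either apply the same moment bound to sums over $p\in(P,P']$ for $P'$ on a fine grid, or simply note the monotone bound. Outside the exceptional set, each dyadic block is $\le P^{-1/4}$. The partial block near an endpoint is controlled by taking a union over a grid of $O((\ln X)^2)$ intermediate endpoints, with the remainder bounded trivially by $\sum_{P<p\le P(1+1/\ln X)}1/p\ll 1/\ln X$ for $P\ge(\ln X)^2$ by Brun–Titchmarsh. Summing $P^{-1/4}$ over dyadic $P\ge z_0'$ gives $\ll z_0'^{-1/4}=o(1/\ln X)$. The main obstacle I expect is the small-$w$ bookkeeping in the first paragraph, namely matching Mertens' constant exactly against $\ln(1+(\ln X)^{1/20}/\ln w)$ uniformly in $w$. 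If it fails for bounded $w$, I would observe that a finite initial segment can be absorbed because $\ln((\ln X)^{1/20}/\ln w)\to\infty$.
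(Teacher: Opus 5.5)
Your proposal is correct and follows essentially the paper's route: a second moment over $N$ of the dyadic blocks $\sum_{P\le p<2P}\frac1p\left(\frac{2+N^2}{p}\right)$, evaluated via the Chinese remainder theorem and the complete sums $\sum_{a \bmod p}\left(\frac{a^2+2}{p}\right)=-1$ for odd $p$, followed by Chebyshev/Markov for the exceptional set and Mertens for the small primes. The paper's bookkeeping is simpler in two places. First, it cuts at $e^{(\ln X)^{1/50}}$; since $1/50<1/20$, the slack is $\gg\ln\ln X$, which absorbs all Mertens constants without tuning $C$ and without treating bounded $w$ separately. Second, it bounds the two boundary blocks trivially by $O(1/\ln w)$, which the right-hand side already dominates, so your grid/Brun--Titchmarsh refinement is unnecessary.
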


\begin{proof}

We first show that 
\begin{equation} \sum_{i: e^{(\ln{X})^{1/50}} \leq 2^i< X^{1/4}} \left| \sum_{2^i \leq p < 2^{i+1}} \frac{1}{p} \left( \frac{2+N^2}{p} \right) \right| \ll e^{-(\ln{X})^{1/100}} \label{pdya} \end{equation}
for all but $O(X e^{-(\ln{X})^{1/100}})$ integers $N\in (X/2,X]$.  

Let $e^{(\ln{X})^{1/50}} \leq t< X^{1/4}$. Then,
\[ \sum_{N \leq X} \left| \sum_{t \leq p < 2t} \frac{1}{p} \left( \frac{2+N^2}{p} \right) \right|^2 = \sum_{t \leq p,p' < 2t} \frac{1}{p p'} \sum_{N \leq X} \left( \frac{2+N^2}{p} \right) \left( \frac{2+N^2}{p'} \right). \]
By the Chinese remainder theorem and a standard bound for the character sums, we see that the sum over $N$ is
\[ \frac{X}{pp'} \left( \sum_{a=1}^{p} \left( \frac{2+a^2}{p} \right) \right) \left( \sum_{a'=1}^{p'} \left( \frac{2+a'^2}{p'} \right) \right) + O(pp') \ll \frac{X}{pp'}, \]
if $p \neq p'$. Thus,
\[ \sum_{N \leq X} \left| \sum_{t \leq p < 2t} \frac{1}{p} \left( \frac{2+N^2}{p} \right) \right|^2 \ll \sum_{t \leq p,p' < 2t} \frac{1}{p p'} \left( \1_{p=p'} X + \frac{X}{pp'} \right) \ll X e^{-(\ln{X})^{1/50}}. \]
This gives \eqref{pdya} for almost all $N$.

Let $N \in (X/2,X]$ and assume \eqref{pdya}.

We divide the sum over $p$ into dyadic ranges $2^i \leq p < 2^{i+1}$, and bound trivially by Mertens theorem for the ranges $p<e^{(\ln{X})^{1/50}}$ and boundary intervals. 
This shows the left-hand side of \eqref{Ldim1eq} is bounded by
\[ \1_{w\leq e^{(\ln{X})^{1/50}}} \ln{(C(\ln{X})^{1/50})} + \ln{\left( 1 + \frac{C}{\ln{w}}\right)} + C e^{-(\ln{X})^{1/50}} \]
for some absolute constant $C>0$. This completes the proof. 
\end{proof}

\begin{lem}\label{St2}
Let $A>0$. 
Assume that $s>8, \theta \in (1/s,1/6)$ and
\[ F(u_1) + \cdots + F(u_r) \geq 1 \]
for all $r>k, u_i \geq 1/s, u_1+\cdots+u_r \leq 1$.

\[ S_1 - S_2 \geq c_w e^{\gamma} X V_N(z) \mathfrak{S}(N) \left(\omega^{-}(s\theta/2) - \int_{1/s}^{\theta/2} \frac{F(u) \omega^{+}(s(\theta/2-u))}{u} du +o(1) \right) \]
for all integers $N \in (X/2,X], 6|N$ with $O(X(\ln{X})^{-A})$ exceptions. 
Here $V_N(z)$ satisfies
\[ V_N(z) \gg \frac{1}{(\ln{z})^2}. \]
\end{lem}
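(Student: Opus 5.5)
The hypothesis on $F$ is designed to make $S_2$ disappear. If $n$ is $z$-rough with $n\le 2X^2+1$ and $n\notin\mathbb{P}_k$, then $n$ has $r>k$ prime factors counted with multiplicity. Each factor has $\ln p/\ln X^2\ge 1/s$, and the normalized logarithms sum to at most $1+o(1)$.

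\emph{Step 1: kill $S_2$.} The normalized logarithms $u_i$ satisfy the hypothesis, so $\sum_{p\mid n}F(\ln p/\ln X^2)\ge 1$ and hence $W(n)\le 0$, giving $W_{>k}(n)=0$. Two points need care.
\begin{itemize}
\item The $o(1)$ slack in the sum of the $u_i$ is handled by shrinking $s$ slightly, or by noting the hypothesis is stable under a small perturbation.
\item Repeated prime factors contribute $o(X/\ln z)$, as in Lemma \ref{t1S2}.
\end{itemize}
Thus $S_2=o(X/(\ln z)^2)$ and it suffices to bound $S_1$ from below.

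\emph{Step 2: sieve setup.} Write $S_1=S_{11}-S_{12}$ exactly as in Lemma \ref{t1S1}, with $a_n=\Sigma(\Lambda,\Lambda,\1_{\cdot=n})$ and $f_3(n)=\1_{d\mid 2mn+1}$.

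For squarefree $d$ coprime to $2N$, one has $d\mid 2m(N-m)+1$ exactly when $m$ lies in one of the roots of $2m^2-2Nm-1\equiv 0 \pmod d$. The discriminant is $4(N^2+2)$, so the number of roots is $\rho(p)=1+\left(\frac{2+N^2}{p}\right)$. Primes $p\mid N$ or $p=2$ are excluded, since $2mn+1\equiv 1$ there; this is consistent with $6\mid N$, which also removes $p=3$.

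Each root class $l$ satisfies $(d,l(N-l))=1$, because $2l(N-l)\equiv -1$. So
\[
\sum_{d\mid n}a_n=\sum_{l}\sum_{m\equiv l\ (d)}\cdots=\frac{\rho(d)\mathfrak{S}(N)Y}{\psi_N(d)}+\rho(d)\sup_l|R_{d,l}(N)|,
\]
and we take $g(d)=\rho(d)/\psi_N(d)$. Since $\rho(d)\le\tau(d)$, Theorem \ref{T1} with $k=1$ controls $\sum_{d\le X^\theta}\mu^2(d)|R_d|$ for almost all $N$.

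\emph{Step 3: sieve dimension and the level.} The sieve variable $n=2mn+1$ has size about $X^2$, while the level is $D=X^\theta=(X^2)^{\theta/2}$. This explains $s\theta/2$ as the argument of $\omega^{\pm}$.

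For Lemma \ref{LS} we need the dimension-one condition \eqref{dim1} with $L=(\ln X)^{1/20+o(1)}$. Since $g(p)=(1+\chi_N(p))/p+O(p^{-2})$ with $\chi_N(p)=\left(\frac{2+N^2}{p}\right)$, this is exactly what Lemma \ref{Ldim1} supplies outside a tiny exceptional set of $N$, for $w<y\le X^{1/4}$. We have $z=X^{2/s}\le X^{1/4}$ because $s>8$.

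The resulting error $H=C L^5(\ln z)^{-1/3}=(\ln X)^{1/4-1/3+o(1)}=o(1)$. This is the delicate point: Bantle's sharp dependence on $L$ is exactly what is needed.

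\emph{Step 4: assembling the bound.} Applying Lemma \ref{LS} gives:
\begin{itemize}
\item $S_{11}\ge xe^\gamma V_N(z)(\omega^-(s\theta/2)-o(1))$;
\item for each $p>z$, an upper bound for $\mathcal{A}_p$ at level $X^\theta/p$, with $\rho(p)/\psi_N(p)$ replaced by its average.
\end{itemize}
In the $S_{12}$ bound the weight $\rho(p)/p$ averages to $2/p$, by partial summation using \eqref{Ldim1eq}. This would double the integral unless compensated; I expect the normalization in the statement to absorb it through the interplay between the $F$-integral and the dimension.

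Here $x=c_wX\mathfrak{S}(N)$ and $V_N(z)=\prod_{p<z}(1-g(p))$. From \eqref{Ldim1eq}, $V_N(z)\gg \prod_{p<z}(1-1/p)^2\exp\bigl(-\sum \chi_N(p)/p\bigr)\gg(\ln z)^{-2}$, using the bound $\sum\chi_N(p)/p\le O(1)+o(1)$ from Lemma \ref{Ldim1}.

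The main obstacle is the uniformity in Step 3, together with correctly handling the $\chi_N(p)$ oscillation in the $S_{12}$ sum over $p\in(z,X^{\theta/2})$.
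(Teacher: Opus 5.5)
Your overall route is the paper's: the hypothesis on $F$ kills $W_{>k}$ on squarefree $z$-rough $n$, you sieve with $g_N(d)=\rho(d)/\psi_N(d)$, \eqref{dim1} comes from Lemma~\ref{Ldim1} with $L$ a small power of $\ln X$ so that Bantle's $H$ is $o(1)$, and the remainders are controlled by Theorem~\ref{T1} with weight $\tau(d)\ge\rho(d)$.

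\textbf{Main gap (Step~4).} This is exactly where the lemma differs from Lemma~\ref{t1S1}. You assert that $\rho(p)/p$ averages to $2/p$ on $(z,X^{\theta}]$. You then hope the resulting factor $2$ in front of the $F$-integral is ``absorbed by the normalization''. Nothing absorbs it: with that factor you would prove a strictly weaker inequality than the one stated.

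The claim is also false, and it contradicts your own Step~3, where the sieve is one-dimensional. In fact $\rho(p)=1+\left(\frac{2+N^2}{p}\right)$ has mean $1$, and the weight $du/u$ in the statement reflects exactly this. The missing argument runs as follows.
\begin{itemize}
\item Split $g_N(p)=\frac1p+\frac1p\left(\frac{2+N^2}{p}\right)+O(p^{-2})$.
\item The first term gives $\int_{1/s}^{\theta/2}F(u)\omega^{+}(s(\theta/2-u))\,du/u+o(1)$, as in Lemma~\ref{t1S1}.
\item The character term carries the weight $h(p)=F(u_p)\,\omega^{+}(s(\theta/2-u_p))$ with $u_p=\ln p/\ln X^2$, which has bounded variation. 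It is $o(1)$ for the non-exceptional $N$: all relevant $p$ lie in $[z,X^{\theta}]\subset[e^{(\ln X)^{1/50}},X^{1/4})$, so partial summation against the dyadic bound \eqref{pdya} from the proof of Lemma~\ref{Ldim1} gives $\ll e^{-(\ln X)^{1/100}}+(\ln z)^{-1}$.
\end{itemize}
Note that \eqref{Ldim1eq} itself, for $w\ge z$, bounds every such interval sum by $\frac{s}{2}(\ln X)^{-19/20}$. So the lemma you cite rules out an average of $2/p$ rather than supporting it.

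\textbf{Further slips.}
\begin{itemize}
\item Primes $p\mid N$ are not excluded. There $2mn+1\equiv 1-2m^2\pmod p$, so $\rho(p)=1+\left(\frac2p\right)$, which equals $2$ for $p\equiv\pm1\pmod 8$ (e.g.\ $N=42$, $m=2$ gives $161=7\cdot 23$). The prime $p=3$ drops out only because $\left(\frac23\right)=-1$. Setting $g(p)=0$ for such $p<z$ would leave remainders $R_p\asymp X/p$. Keep $g_N(d)=\rho(d)/\psi_N(d)$ for all odd $d$, as the paper does.
\item Your derivation of $V_N(z)\gg(\ln z)^{-2}$ has a spurious square. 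It also overstates Lemma~\ref{Ldim1}, which for small $w$ only gives $\frac1{20}\ln\ln X+O(1)$. The clean route is $g_N(p)\le 2/(p-2)$ for $p>3$.
\item In Step~1 there is no $o(1)$ slack, since $2mn+1\le N^2/2+1<X^2$.
\item Since $2mn+1$ has size $X^2$, primes up to $X$ can satisfy $p^2\mid 2mn+1$. The count $\sum_{p>z}(X/p^2+1)$ borrowed from Lemma~\ref{t1S2} is then far too weak. For $p>X^{1/2+\delta}$, write $(2m-N)^2+2kp^2=N^2+2$ with $k\ll X^{1-2\delta}$ and use the divisor bound for the number of representations. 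The paper asserts $S_2\ll X/(\ln X)^3$ without detail here.
\end{itemize}
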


\begin{proof}

By the assumption on $F$, we see that $W_{>k}(n) \neq 0$ only if $p^2|n$ for some $p>z$. Therefore, we have
\[ S_{2} \ll \frac{X}{(\ln{X})^3}. \]

Let 
\[ \Omega_N(d) = \{ l \bmod d : 2l(N-l)+1 \equiv 0 \bmod d \} , \quad g_N(d) = \begin{dcases} \frac{|\Omega_N(d)|}{\psi_N(d)} & 2\nmid d, \\ 0 & 2|d. \end{dcases} \]
By Chinese remainder theorem, we see that $g_N$ is multiplicative with respect to $d$. 

For $p>2$, we have 
\begin{align*}
|\Omega_N(p)| = 1+\left( \frac{2+N^2}{p} \right)
\end{align*}
since the equation is equivalent to $(2x-N)^2 \equiv 2+N^2 \bmod p$. 
In particular, we have $|\Omega_N(d)| \leq \tau(d) \quad (\mu(d)^2=1)$, $g_N(3) = 0$ and $g_N(p) \leq 2/(p-2)$ for $p>3$. 

By Mertens' theorem, we have
\[ \sum_{w\leq p<y} g_N(p) = \ln{\left(\frac{\ln{y}}{\ln{w}}\right)} + \sum_{w\leq p<y} \frac{1}{p} \left( \frac{2+N^2}{p} \right) + O((\ln{w})^{-1}) \]
for all $2<w<y \leq z$. 
This and Lemma \ref{Ldim1} give \eqref{dim1} with $L=(\ln{X})^{1/16}$ for almost all $N$. 

Arguing as in the proof of Lemma \ref{t1S1}, we have

\[ S_1 \geq c_w X e^{\gamma} V_N(z) \mathfrak{S}(N) \left(\omega^{-}(s\theta/2) - \int_{1/s}^{\theta/2} \frac{F(u) \omega^{+}(s(\theta/2-u))}{u} du +o(1) \right) + O\left( \sum_{d \leq X^{\theta}} \left| R_{d} \right| \right), \]
where
\[ V_N(z) = \prod_{p < z} \left( 1- g_N(p) \right) \geq \prod_{5<p < z} \left( 1- \frac{2}{p-2} \right) \gg \frac{1}{(\ln{X})^2} \]
and
\[ R_d = \sum_{\substack{m+n=N \\ d|2mn+1}} \Lambda(m) \Lambda(n) w(n/X) - |\Omega_N(d)| \frac{\mathfrak{S}(N)}{\psi_N(d)} \sum_{n} w(n/X). \]

Since
\[ \sum_{\substack{m+n=N \\ d|2mn+1}} \Lambda(m) \Lambda(n) w(n/X) = \sum_{l \in \Omega_N(d)} \sum_{\substack{m+n=N \\ m \equiv l \bmod d}} \Lambda(m) \Lambda(n) w(n/X) \]
and $(d,l)=(d,N-l)=1$ for all $l \in \Omega_N(d)$, we have
\[ \sum_{d \leq X^{\theta}} \mu(d)^2 \left| R_{d} \right| \ll \sum_{\substack{d \leq X^{\theta} \\ 2 \nmid d}} \tau(d) \sup_{l:(d,l(N-l))=1} \left| \sum_{\substack{m+n=N \\ m \equiv l \bmod d}} \Lambda(m) \Lambda(n) w(n/X) - \frac{\mathfrak{S}(N)}{\psi_N(d)} \sum_{n} w(n/X) \right| \ll \frac{X}{(\ln{X})^3} \]
for almost all $N$ by Theorem \ref{T1}. This completes the proof.

\end{proof}

\begin{proof}[Proof of Theorem \ref{T3}]

Let $k=13$, and define
\[ F(u) = \frac{\1_{u \leq \lambda} }{(k+1)\lambda - 1} \left( \lambda - u \right), \]
where
\[ \gamma = \frac{1}{12}, \quad \lambda = \frac{\gamma}{1+3^{-k}}, \quad s = \frac{4}{\gamma}>8. \]
The computation in \cite[Section 6]{heath2002lectures} shows that
\[ \omega^{-}(s/12) - \int_{1/s}^{1/12} \frac{F(u) \omega^{+}(s(1/12-u))}{u} du > 0, \]
since
\[ 0.0833 \ldots = \frac{1}{12} > \frac{1}{k+1 - \frac{\ln{4/(1+3^{-k})}}{\ln{3}}} = 0.0785\ldots. \]
This and Lemma \ref{St2} complete the proof. 
\end{proof}

\section{Appendix}

In this appendix, we record numerical computations for the weighted sieve criterion, independently of the analytic distribution estimates proved in the main text. The computations are inspired by the numerical work of Matom{\"a}ki and Z{\'u}{\~n}iga-Alterman \cite{matomaki2025weighted}. 
The case \(k=4\) supplies the numerical input used in the proof of Theorem \ref{T2}. 

The initial versions of the arguments and numerical computations in this appendix were generated with OpenAI Codex. The material used in the proof of Theorem \ref{T2}, as well as the computations reported in Table 1 below, was subsequently revised and carefully checked by the author. These certified numerical computations were carried out in C using Arb and FLINT. By contrast, Table 2 is included only as a numerical Richert-type comparison; it has not been checked by the author to the same standard, and it has not been certified using Arb/FLINT interval arithmetic. The source code is available in the arXiv version of this manuscript.

\subsection{Setting}

Recall that for a function $F:[0,1] \to \mathbb{R}_{\geq 0}$ and $s,\theta>0$, we write
\begin{align*}
M_k(F,\theta,s) &= s \omega^-(s\theta) - s \int_{1/s \leq u \leq \theta} \frac{F(u) \omega^+(s(\theta-u))}{u} du \\
& - \frac{2}{\theta} \sum_{r=k+1}^{\infty} \frac{1}{r!} \int \cdots \int_{\substack{1/s \leq u_i \leq 1 \\ u_1 + \cdots + u_r = 1}} \frac{G(u_1, \ldots, u_r)}{u_1 \cdots u_r} du
\end{align*}
where
\[ G(u_1,\ldots,u_r) = \max(0,1-F(u_1)-\cdots-F(u_r)). \]

We want to find small $\theta \in (0,1)$ such that $M_k(F,\theta,s)>0$ for some non-negative piecewise smooth function $F$ supported on $[0,\theta]$ and $s>1/\theta$.

We write
\[ S_G = 
\sum_{r=k+1}^{\infty} T_r, \quad T_r = 
\int_{\substack{1/s\le u_1\le \cdots\le u_r\le 1\\
u_1+\cdots+u_r=1}}
\frac{G(u_1,\ldots,u_r)}{u_1\cdots u_r}\,
du.
\]
and
\[
A^\pm(u):=u\omega^\pm(u).
\]
The linear sieve functions are normalized by
\[
A^+(u)=2 \quad (0<u\le 3),\qquad
A^-(u)=0 \quad (0<u\le 2),
\]
and by the delay equations
\[
(A^+)'(u)=\frac{A^-(u-1)}{u-1}\quad (u>3),\qquad
(A^-)'(u)=\frac{A^+(u-1)}{u-1}\quad (u>2).
\]

Following \cite{matomaki2025weighted}, define
\[
c_1(s)=\1_{s\ge1},\qquad
c_r(s)=\int_{r \leq u \leq s} \frac{c_{r-1}(u-1)}{u-1}\,du
\quad (r\ge2).
\]
We have
\[
c_r(s)=
\int_{\substack{1/s\le u_1\le\cdots\le u_r\\
u_1+\cdots+u_r=1}}
\frac{du}{u_1\cdots u_r}.
\]
for $r \geq 2$. This follows by direct calculation if $r=2$. Assuming the case $r$, the case $r+1$ follows from
\begin{align*}
&\int_{1/s\le u_1\le\cdots\le u_r \leq 1 - u_1 - \cdots - u_{r}} \frac{du_1 \cdots du_r}{u_1\cdots u_{r} (1 - u_1 - \cdots - u_r)} \\
&= \int_{1/s \leq u_1 \leq 1/(r+1)} \frac{c_r((1-u_1)/u_1)}{u_1(1-u_1)} du_1 = c_{r+1}(s). 
\end{align*}
Here the first equality follows by writing
\(u_{i+1}=(1-u_1)v_i\) for \(1\le i\le r\), and the last equality follows from the change of variables \(x=1/u_1\).

Three choices of \(F\) have been used in this context; see \cite{matomaki2025weighted} for the history and applications. 

\par
\text{(Trivial weight)} $F=0$, \par
\text{(Kuhn's weight)} $F(u)=\frac12\1_{u\le1/t}$ for some $t<s$. \par
\text{(Richert's weight)} $F(u)=\lambda\1_{u\le1/t}(1-tu)$ for some $t<s$ and $\lambda>0$. \par

For the trivial weight, one has \(G=1\), and hence
\[ T_r= c_{r}(s). \]

For Kuhn's weight, one has
\[ T_r=T_{r,0}+T_{r,1}, \qquad T_{r,0} =c_r(t), \quad T_{r,1} = \frac{t}{2}\int_{t-1}^{t(1-1/s)} \frac{c_{r-1}(u)}{u(t-u)}du \]
by \eqref{Ghalf}. The first integral in \(M_k\) is evaluated using
\[
\frac12\int_{1/s}^{1/t}\frac{\omega^+(s(\theta-u))}{u}\,du
=
\frac12\int_{\beta(1-1/\alpha)}^{\beta-1}
\frac{A^+(v)}{v(\beta-v)}\,dv,
\qquad
\alpha=t\theta,\quad \beta=s\theta.
\]

These two weights only require values of \(A^\pm\) and \(c_r\), which can be computed rigorously by the recursion formulas above. 

For Richert's weight, \(S_G\) was computed numerically by an FFT convolution method. This computation was not enclosed using Arb interval arithmetic.

\subsection{Table}

The next table was computed in C using FLINT and Arb interval arithmetic. The source code was carefully checked by the author.
The intervals have been rounded outwards for readability.

\begin{table}[ht] 
\centering
\small
\setlength{\tabcolsep}{5pt}
\begin{tabular}{@{}llrrrrl@{}}
\toprule
weight & \(k\) & \(\theta\) & \(s\) & \(t\) & \(S_G\) & certified interval for \(M\)\\
\midrule
\(0\)
 & 2 & 0.50512 & 5.12  & --    & 0.4588 & \([0.0097,\ 0.0105]\)\\
 & 3 & 0.29515 & 8.76  & --    & 0.4594 & \([0.0095,\ 0.0108]\)\\
 & 4 & 0.18352 & 14.17 & --    & 0.4693 & \([0.0097,\ 0.0119]\)\\
 & 5 & 0.11801 & 22.18 & --    & 0.4802 & \([0.0102,\ 0.0136]\)\\
 & 6 & 0.07742 & 34.02 & --    & 0.4905 & \([0.0095,\ 0.0147]\)\\
 & 7 & 0.05145 & 51.50 & --    & 0.5000 & \([0.0170,\ 0.0249]\)\\
\addlinespace
\(\frac12\1_{u\le1/t}\)
 & 2 & 0.47337 & 10.87 & 3.35  & 0.4205 & \([0.0094,\ 0.0109]\)\\
 & 3 & 0.26631 & 20.25 & 5.83  & 0.4090 & \([0.0090,\ 0.0117]\)\\
 & 4 & 0.16328 & 33.53 & 9.56  & 0.4215 & \([0.0107,\ 0.0117]\)\\
 & 5 & 0.10438 & 52.76 & 15.08 & 0.435  & \([0.0097,\ 0.0166]\)\\
 & 6 & 0.06832 & 80.79 & 23.25 & 0.448  & \([0.0057,\ 0.0163]\)\\
 & 7 & 0.04538 & 121.73& 35.30 & 0.4597 & \([0.0135,\ 0.0295]\)\\
\bottomrule
\end{tabular}
\caption{Certified positive values of \(M\).}
\end{table}

For Richert's weight, the following values are included only as reference numerical values. They are \textbf{not} used in the proof of Theorem \ref{T2}; they are \textbf{not} certified using Arb interval arithmetic; and they have \textbf{not} been checked by the author to the same standard as Table~1. 
The source code for this computation is also included in the arXiv version of this manuscript.

\begin{table}[ht]
\centering
\small
\setlength{\tabcolsep}{6pt}
\begin{tabular}{@{}crrrrcc@{}}
\toprule
\(k\) & \(\theta\) & \(s\) & \(t\) & \(\lambda\) & \(S_G\) & exact \(r\)-range\\
\midrule
2 & 0.48420 & 9.48   & 2.95  & 1.02 & 0.4976 & \(3\)\\
3 & 0.27318 & 18.28  & 4.46  & 0.86 & 0.4340 & \(4\text{--}5\)\\
4 & 0.16695 & 30.79  & 7.12  & 0.81 & 0.4535 & \(5\text{--}8\)\\
5 & 0.10638 & 48.70  & 11.04 & 0.78 & 0.4706 & \(6\text{--}12\)\\
6 & 0.06946 & 75.71  & 16.82 & 0.77 & 0.4884 & \(7\text{--}18\)\\
7 & 0.04605 & 113.81 & 25.51 & 0.76 & 0.5013 & \(8\text{--}26\)\\
\bottomrule
\end{tabular}
\caption{Richert-type rows; reference numerical values only, not certified.}
\end{table}

For \(k=4\) and \(\theta=1/6\), we could not find a choice of parameters in this Richert family giving $M_k>-0.025$.

\FloatBarrier

Among the parameter choices displayed in these tables, Kuhn's weight gives the smallest value of $\theta$.

\subsection{Asymptotic}

The following proposition gives an asymptotic upper bound for $\theta$ obtainable from the \(F=0\) construction.
For comparison, it is known that, without switching, one needs a condition of order at least \(1/k\) to find \(\mathbb{P}_k\). 
\begin{prop}
Let $\varepsilon>0$, and assume that $k$ is sufficiently large in terms of $\varepsilon$. 
Then there exists $s_k>0$ such that $M_k(0,\theta_k,s_k)>0$ where
\[
\theta_k=\exp\!\left(-(1-\varepsilon)\frac{k}{e}\right).
\]
\end{prop}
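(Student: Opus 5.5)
With $F=0$ we have $G\equiv 1$. Hence $T_r=c_r(s)$, the middle integral in $M_k$ vanishes, and
\[ M_k(0,\theta,s)=s\,\omega^-(s\theta)-\frac{2}{\theta}\sum_{r>k}\frac{c_r(s)}{r!}\cdot r! \]
where the $r!$ cancels because $\frac{1}{r!}\int$ over unordered $u$ equals the ordered integral $T_r=c_r(s)$. So the task is to show
\[ s\,\omega^-(s\theta)>\frac{2}{\theta}\sum_{r>k}c_r(s) \]
for a suitable $s=s_k$ and $\theta=\theta_k$.

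\emph{Choice of $s$.} Take $s=\beta/\theta$ with $\beta$ a fixed large constant. Then $\omega^-(\beta)=e^{-\gamma}(1+O(e^{-\beta}))/\beta$, so the left side equals $s\,\omega^-(\beta)\asymp \theta^{-1}$. Up to constants, it then suffices to have
\[ \sum_{r>k}c_r(s)\le c_0 \]
for a small absolute constant $c_0$ with $s=\beta/\theta$; choosing $\beta$ large enough that $\beta\omega^-(\beta)\ge e^{-\gamma}/2$ fixes $c_0$. The whole problem is therefore an upper bound for the tail of $c_r(s)$ with $\ln s\approx (1-\varepsilon)k/e$.

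\emph{Bounding $c_r(s)$.} By the integral representation, $c_r(s)$ is at most $\frac{1}{r!}$ times the unordered integral over $u_i\ge 1/s$ with $\sum u_i=1$ of $\prod u_i^{-1}$. Dropping the constraint $\sum u_i=1$ in favour of a smoothing, I would bound it through the Laplace transform. For any $\lambda>0$,
\[ c_r(s)\le \frac{e^{\lambda}}{r!}\int_{u_i\ge1/s} e^{-\lambda\sum u_i}\,\delta\Big(1-\sum u_i\Big)\prod\frac{du_i}{u_i}. \]
Estimating the $\delta$ by a window $\sum u_i\in[1,1+1/s]$ costs only a factor $O(s)$, which is harmless after the choice of $\lambda$. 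This gives roughly
\[ c_r(s)\ll \frac{s\,e^{\lambda}}{r!}\big(E_1(\lambda/s)\big)^r\approx \frac{s\,e^{\lambda}}{r!}\big(\ln(s/\lambda)\big)^r. \]
Taking $\lambda=r$ and $L=\ln s$, Stirling gives
\[ c_r(s)\ll s\,\Big(\frac{eL}{r}\Big)^{r}\cdot e^{r}\cdot e^{-r}, \]
where I still need to track this carefully. The cleaner route is simply $c_r(s)\le L^r/r!$ together with the constraint $u_i\ge 1/s$, which forces $r\le s$; the crude bound $L^r/r!$ alone only gives the threshold $L\approx k/e$. Indeed
\[ \sum_{r>k}\frac{L^r}{r!}\le \frac{L^{k+1}}{(k+1)!}\,e^{L}\approx \Big(\frac{eL}{k}\Big)^{k}e^{L}. \]
With $L=(1-\varepsilon)k/e-O(1)$ this is $(1-\varepsilon)^k e^{(1-\varepsilon)k/e}$. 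That is \emph{not} small, since $-\varepsilon+(1-\varepsilon)/e>0$, so the factor $e^L$ from the tail must be avoided.

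\emph{Removing the $e^L$ factor.} The real bound I expect to need is $c_r(s)\le L^{r-1}/(r-1)!$ or sharper, using that $u_r\ge 1/r$ is forced. The largest variable lies in $[1/r,1]$, which removes one logarithm. The tail is then dominated by its first term when $L<k$: the ratio of consecutive terms is $L/r\le 1/e$, so
\[ \sum_{r>k}c_r(s)\ll \frac{L^k}{k!}\approx\Big(\frac{eL}{k}\Big)^k k^{-1/2}=(1-\varepsilon)^k k^{-1/2}\to 0. \]
The factor $s$ in front compares with $\theta^{-1}=s/\beta$ on the left and cancels, so no $e^L$ appears. The main obstacle is exactly this bookkeeping: proving $c_r(s)\le L^{r-1}/(r-1)!$ uniformly for $r>k$, e.g.\ by induction from the recursion
\[ c_r(s)=\int_r^s\frac{c_{r-1}(u-1)}{u-1}\,du\le\int_1^s\frac{c_{r-1}(u)}{u}\,du, \]
with base case $c_1=1$. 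This yields $c_r(s)\le (\ln s)^{r-1}/(r-1)!$ directly, and then the geometric tail estimate finishes the proof with $\theta_k=\beta/s_k$ and $\ln s_k=(1-\varepsilon)k/e$.
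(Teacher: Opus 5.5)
Your final argument is essentially the paper's proof: you prove $c_r(s)\le(\ln s)^{r-1}/(r-1)!$ by induction from the recursion, take $s=\beta/\theta$ (the paper takes $\beta=3$, using $\omega^-(3)=\tfrac{2}{3}\ln 2>0$), and sum the tail geometrically to get $\ll(e\ln s/k)^k\to0$. There are a few harmless slips. First, $\omega^-(\beta)\to e^{-\gamma}$, not $e^{-\gamma}/\beta$. Second, to get exactly the stated $\theta_k$ you should set $s_k=\beta/\theta_k$, so that $\ln s_k=(1-\varepsilon)k/e+\ln\beta\le(1-\varepsilon/2)k/e$ for large $k$. Third, the $e^L$ factor in your detour came only from the wasteful estimate $\sum_{r>k}L^r/r!\le\frac{L^{k+1}}{(k+1)!}e^L$; it is the geometric ratio that removes it, not the passage to the (termwise weaker, but actually provable) bound $L^{r-1}/(r-1)!$.
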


\begin{proof}

We first note
\begin{equation} \label{cjb} c_j(s) \leq \frac{(\ln{s})^{j-1}}{(j-1)!}. \end{equation}
If $j=1$, this is trivial. Assuming the case $j-1$, the case $j$ follows from
\[ c_j(s) \leq \int_{j\leq u \leq s} \frac{(\ln{(u-1)})^{j-2}}{(j-2)! (u-1)} du = \frac{(\ln{(u-1)})^{j-1}}{(j-1)!} \Big|_{u=j}^{s} \leq \frac{(\ln{s})^{j-1}}{(j-1)!}. \]

Hence we have
\[ C_{k+1}(s) \leq \sum_{r\geq k} \frac{(\ln{s})^r}{r!}. \]
Since $\ln{r!} \geq r(\ln{r}-1)$, we have $(\ln{s})^r/r! \leq \left( e \ln{s}/r \right)^r$. 

We take
\[ s_k = 3/\theta_k. \]
We may assume $\ln{s_k} \leq (1-\varepsilon/2)k/e$ if $k$ is large. Hence
\[ C_{k+1}(s_k) \leq \sum_{r \geq k} (1-\varepsilon/2)^r \leq \frac{(1-\varepsilon/2)^k}{\varepsilon/2} \to 0 \quad (k \to \infty), \]
and
\[ M_k(0,\theta_k,s_k) \geq s_k \left( \omega^-(3) - \frac{2}{3} C_{k+1}(s_k) \right) >0 \]
for large $k$. 
\end{proof}

\bibliographystyle{plain}
\nocite{*}
\bibliography{ref.bib}

\end{document}